\def\supp{\mathop{\mathrm{supp}}\limits}
\def\GrMod{\operatorname{\mathsf{GrMod}}}
\def\turn!{\textup{!`}}
\def\op{\textup{op}}
\def\soc{\operatorname{soc}}
\def\tridim{\operatorname{\mathsf{tridim }}}
\def\grCM{\operatorname{\mathsf{CM}}^{\Bbb{Z}}}
\def\grmod{\operatorname{mod}^{\Bbb{Z}}}
\def\mrb{\mathrm{b}}
\def\Sing{\operatorname{Sing}}
\def\stabgrCM{\operatorname{Sing}^{\ZZ}}
\def\stabCM{\operatorname{\underline{\mathsf{CM}}}}
\def\stabgrCM{\operatorname{\underline{\mathsf{CM}}}^{\Bbb{Z}}}
\def\stabgrmod{\operatorname{\underline{mod}}^{\Bbb{Z}}}
\def\thick{\mathop{\mathsf{thick}}\nolimits}
\def\ZZ{{\Bbb Z}}
\def\sfD{{\mathsf{D}}}
\def\sfK{{\mathsf{K}}}
\def\sfT{{\mathsf{T}}}
\def\tuD{{\textup{D}}}
\def\op{\operatorname{op}}
\def\mod{\operatorname{mod}}
\def\GrMod{\operatorname{Mod}^{\mathbb{Z}}}
\def\proj{\operatorname{proj}} 
\def\inj{\operatorname{inj}}
\def\grproj{\operatorname{proj}^{\mathbb{Z}}} 
\def\grinj{\operatorname{inj}^{\mathbb{Z}}}
\def\add{\operatorname{add}}
\def\image{\operatorname{Im}}
\def\Hom{\operatorname{Hom}}
\def\stabHom{\operatorname{\underline{Hom}}}
\def\End{\operatorname{End}}
\def\Ext{\operatorname{Ext}}
\def\gldim{\operatorname{gldim}}
\newcommand{\lotimes}{\otimes^{\Bbb{L}}}
\newtheorem{lemma}{Lemma}[section]
\newtheorem{proposition}[lemma]{Proposition}
\newtheorem{theorem}[lemma]{Theorem}
\newtheorem{corollary}[lemma]{Corollary}
\newtheorem{problem}[lemma]{Problem}
\theoremstyle{definition}
\newtheorem{remark}[lemma]{Remark}
\newtheorem{example}[lemma]{Example}
\newtheorem{definition}[lemma]{Definition}
\newtheorem{question}[lemma]{Question}
\theoremstyle{remark}
\newcommand{\qv}{\mathsf{qv}}
\title[Tilting theory for finite dimensional $1$-Iwanaga-Gorenstein algebras]
{ 
Tilting theory for finite dimensional $1$-Iwanaga-Gorenstein algebras \\
}
\author[Kimura]{Yuta Kimura}
\address{Y. Kimura: Osaka Central Advanced Mathematical Institute, Osaka Metropolitan University, 3-3-138 Sugimoto, Sumiyoshi-ku Osaka 558-8585, Japan
}
\email{yutakimura@omu.ac.jp}
\author[Minamoto]{Hiroyuki Minamoto}
\address{H. Minamoto: Department of Mathematics, Graduate School of Science/ Faculty of Science, Osaka Metropolitan University
1-1 Gakuen-cho, Nakaku, Sakai, Osaka 599-8531, Japan}
\email{minamoto@omu.ac.jp}
\author[Yamaura]{Kota Yamaura}
\address{K. Yamaura: Graduate Faculty of Interdisciplinary Research, Faculty of Engineering, University of Yamanashi, 4-4-37, Takeda, Kofu, Yamanashi 400-8510, Japan}
\email{kyamaura@yamanashi.ac.jp}
\subjclass[2020]{Primary: 16G10, Secondary: 16B50, 16E35, 16E65}
\begin{document}

\maketitle

\begin{abstract}

In representation theory of graded Iwanaga-Gorenstein algebras, tilting theory of the stable category $\stabgrCM A$ of graded Cohen-Macaulay modules plays a prominent role. 
In this paper we study the following two central problems of tilting theory of $\stabgrCM A$ in the case where $A$ is finite dimensional:
(1) Does $\stabgrCM A$ have a tilting object?
(2) Does the endomorphism algebras of tilting objects in $\stabgrCM A$ have finite global dimension?

To the problem (2) we give the complete answer. 
We show that the endomorphism algebra of any tilting object in $\stabCM^{\ZZ}A$  has finite global dimension. 
To the problem (1) we give a partial answer. 
For this purpose, first we introduce an invariant $g(A)$ for a finite dimensional graded algebra $A$. Then, we prove that in the case where $A$ is 1-Iwanaga-Gorenstein, an inequality for $g(A)$ gives a sufficient condition that a specific Cohen-Macaulay module $V$ becomes a tilting object in the stable category. 

As an application, we study the existence of tilting objects in $\stabgrCM \Pi(Q)_w$
where $\Pi(Q)_w$ is the truncated preprojective algebra of a quiver $Q$ associated to $w\in W_Q$.
We prove that if the underling graph of $Q$ is tree, then $\stabgrCM \Pi(Q)_w$ has a tilting object. 
\end{abstract}

\bigskip

\section{Introduction}

In representation theory of graded Iwanaga-Gorenstein algebras, 
tilting theory of the stable category $\stabgrCM A$ of graded Cohen-Macaulay modules have been playing a prominent role.  
Recently it became to get much wider attention, as it found connections to theory of cluster categories and mirror symmetry, see e.g., 
\cite{AIR, Futaki-Ueda, Hanihara-Iyama, HIMO, Iyama-Takahashi, KLM, LP, Mori-Ueyama, Nakajima} and a survey article \cite{Iyama-ICM}.

In this paper we deal with  a \emph{finite dimensional} 
(non-negatively) graded Iwanaga-Gorenstein algebra $A=\bigoplus_{i=0}^{\ell} A_{i}$ 
and discuss the fundamental problems of tilting theory of  $\stabgrCM A$ posed in Problem \ref{problem-1} below. 
So from now on we denote by $A=\bigoplus_{i=0}^{\ell} A_{i}$  a finite dimensional graded algebra with $A_{\ell} \neq 0$ 
and recall the basic notions. 

A (non-negatively) graded algebra $A=\bigoplus_{i=0}^{\ell} A_i$ is called a \emph{graded $d$-Iwanaga-Gorenstein algebra} (graded $d$-IG-algebra, for short) if  the graded injective dimensions of $A_A$ and ${}_AA$ are at most $d$. 
A finitely generated graded $A$-module $M$ is called \emph{Cohen-Macaulay} (CM, for short) if it satisfies 
$\Ext^{i}_A(M,A)=0$ for all $i>0$. 
Let $\grCM A$ be the category of graded Cohen-Macaulay $A$-modules, namely
\[
\grCM A:=\{ M \in \grmod A \ | \ \Ext^{i}_A(M,A)=0  \mbox{ for all } i> 0 \},
\]
where $\grmod A$ is the category of finitely generated graded $A$-modules. 
A fundamental fact  is that it is a Frobenius category, and so its  stable category 
\[
\stabgrCM A:=\grCM A/[\proj^{\ZZ}A]
\]
has a natural structure of a triangulated category \cite{Happel book}. 

Therefore, as is about any triangulated categories,  
one of the main concern is  whether  $\stabgrCM A$ has a tilting object $T$ or not. 
If the answer is affirmative, 
then there exists an equivalence $\stabgrCM A \cong\mathsf{K}^{\mrb}(\proj\Gamma)$
where we set $\Gamma := \End_{\stabgrCM A}(T)$. 
Moreover, if $\Gamma$ is of finite global dimension, 
then the homotopy category $\mathsf{K}^{\mrb}(\proj \Gamma)$ coincides with the derived category $\sfD^{\mrb}(\mod\Gamma)$, 
and consequently   
the above equivalence becomes   $\stabgrCM A \cong  \sfD^{\mrb}(\mod\Gamma)$.
Thus, the following problems are fundamental for representation theory of IG-algebras.

\begin{problem}\label{problem-1}
Let $A$ be a finite dimensional graded $d$-IG-algebra. 
\begin{enumerate}[\rm (1)]
\item 
Does $\stabgrCM A$ have a tilting object?
\item
Does the endomorphism algebras of tilting objects in $\stabgrCM A$ have finite global dimensions?
\end{enumerate}
\end{problem}

\subsection{Main results}

\subsubsection{Fineness of global dimension of the endomorphism algebra of a tilting object}

One of our main result gives the complete answer of Problem \ref{problem-1}(2).

\begin{theorem}\label{Q(c)<->(d)}
Let $A$ be a finite dimensional non-negatively graded $d$-IG-algebra.
If there exists a tilting object $T$ in $\stabgrCM A$,
then its endomorphism algebra $\Gamma:=\End_{\stabgrCM A}(T)$ has finite global dimension. 
Consequently, we obtain an equivalence $\stabgrCM A \cong \sfD^{\mrb}(\mod \Gamma)$ of triangulated categories.  
\end{theorem}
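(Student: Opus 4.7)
The plan is to use Serre duality on $\stabgrCM A$, coupled with the finite-dimensionality and bounded grading of $A$, to force $\Gamma$ first to be Iwanaga--Gorenstein and then to have finite global dimension.

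First I would invoke a Serre functor $\nu$ on $\stabgrCM A$: since $A$ is a finite-dimensional $d$-IG algebra, $\grCM A$ is a Hom-finite Krull--Schmidt Frobenius category and the classical Auslander--Reiten machinery (adapted to the graded setting) yields an auto-equivalence $\nu$ with a natural isomorphism $D\Hom_{\stabgrCM A}(X,Y) \cong \Hom_{\stabgrCM A}(Y,\nu X)$. Writing $T=\bigoplus_j T_j$ for the decomposition into indecomposables, $\Psi = \RHom_{\stabgrCM A}(T,-)\colon \stabgrCM A \xrightarrow{\sim} \per(\Gamma) \subseteq \sfD^{\mrb}(\mod\Gamma)$ for the associated tilting equivalence, and $I_j = D(\Gamma e_j)$ for the indecomposable injective right $\Gamma$-module with socle $S_j$, Serre duality gives
\[
\Hom_{\stabgrCM A}(T,\nu T_j[i]) \cong D\Hom_{\stabgrCM A}(T_j,T[-i]) = \begin{cases} I_j & i=0,\\ 0 & i\neq 0.\end{cases}
\]
Hence $\Psi(\nu T_j)\cong I_j$ in $\sfD^{\mrb}(\mod\Gamma)$, so $I_j\in\per(\Gamma)$ and $\pd_\Gamma I_j<\infty$ for every $j$. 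The symmetric argument on $\stabgrCM A^{\op}$, applied to the dual tilting object $T^* := \RHom_A(T,A)$ with endomorphism algebra $\Gamma^{\op}$, gives the analogous statement on the other side, and together they say that $\Gamma$ is itself Iwanaga--Gorenstein.

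The main obstacle will be upgrading from ``$\Gamma$ is IG'' to ``$\gldim\Gamma<\infty$''; equivalently, proving that every simple right $\Gamma$-module lies in $\per(\Gamma)$. This is where finite dimensionality and bounded grading of $A$ must enter non-trivially. I would exploit the grading shift auto-equivalence $(1)$ of $\stabgrCM A$ together with the elementary boundedness observation that, because $A$ has grading concentrated in $[0,\ell]$ and all finitely generated graded $A$-modules are finite dimensional, one has $\Hom_{\grmod A}(M,N(i))=0$ for $|i|\gg 0$ for any such $M,N$. The graded refinement
\[
\Gamma^{\mathbb{Z}} \;:=\; \bigoplus_{i\in\mathbb{Z}}\Hom_{\stabgrCM A}(T,T(i))
\]
is therefore a finite-dimensional $\mathbb{Z}$-graded algebra with degree zero part $\Gamma$. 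Combining this boundedness with the CM approximations of $\grmod A$ available because $A$ is $d$-IG, I would realize each simple right $\Gamma$-module as the image under $\Psi$ of a bounded triangle in $\stabgrCM A$ built from the projective presentation of the simple together with its finitely many CM-approximations; placing each simple in $\per(\Gamma)$ gives $\gldim\Gamma<\infty$.

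Once $\gldim\Gamma<\infty$ is in hand, the inclusion $\per(\Gamma)\hookrightarrow \sfD^{\mrb}(\mod\Gamma)$ is an equivalence, so composing with the tilting equivalence delivers the final statement $\stabgrCM A \simeq \sfD^{\mrb}(\mod\Gamma)$ of triangulated categories.
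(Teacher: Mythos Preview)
Your first two steps are sound and represent a genuinely different opening from the paper's: invoking the Serre functor on $\stabgrCM A$ and transporting it through the tilting equivalence does show that each indecomposable injective $\Gamma$-module is perfect, hence that $\Gamma$ is Iwanaga--Gorenstein. The paper makes no use of Serre duality at all.

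The gap is in your third step. You need to show that each simple $\Gamma$-module $S_j$ lies in $\per(\Gamma)$, but your sketch does not accomplish this. Anything you build ``as the image under $\Psi$'' of objects and triangles in $\stabgrCM A$ is automatically in $\per(\Gamma)$; that is a tautology, not progress, since the question is precisely whether $S_j$ is such an image. Lifting a projective presentation $P_1\to P_0\to S_j\to 0$ through $\Psi^{-1}$ gives only the two-term perfect complex $(P_1\to P_0)$, not $S_j$ itself, and the ``CM-approximations'' you invoke apply to graded $A$-modules, not to $\Gamma$-modules --- there is no mechanism in your argument linking them to $S_j$. The finite-dimensionality of $\Gamma^{\mathbb Z}$ is true but does not by itself bound projective dimensions over its degree-zero part: there exist Iwanaga--Gorenstein algebras of infinite global dimension (any non-semisimple self-injective algebra), so the implication ``IG $\Rightarrow$ $\gldim<\infty$'' must use some global finiteness of $\stabgrCM A$ that you have not isolated.

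The paper supplies exactly this missing input via Rouquier's dimension of triangulated categories. Existence of a tilting object forces $\gldim A_0<\infty$ (Lu--Zhu), hence $\gldim\nabla A<\infty$, hence $\tridim\sfD^{\mrb}(\mod\nabla A)<\infty$; since there is a dense triangle functor $\sfD^{\mrb}(\mod\nabla A)\to\stabgrCM A\simeq\sfK^{\mrb}(\proj\Gamma)$, one obtains $\tridim\sfK^{\mrb}(\proj\Gamma)<\infty$, and Rouquier's criterion \cite[Proposition~7.25]{Rouquier} then gives $\gldim\Gamma<\infty$. This argument bypasses Serre duality entirely and never needs to locate the individual simple $\Gamma$-modules inside $\per(\Gamma)$.
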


%

\subsubsection{Existence of a silting object $V$}

Next, we investigate Problem \ref{problem-1}(1) in the case $d\leq1$. 
First we recall that the third author solved Problem 1.1(1) in the case $d=0$ in \cite{Yamaura}.
He proved that if $A=\bigoplus_{i=0}^{\ell}A_{i}$ is a $0$-IG algebra ($=$a graded self-injective algebra),
then the stable category $\stabgrCM A=\stabgrmod A$ has a tilting object
if and only if $A_0$ has finite global dimension. 
In the proof of ``if part'', he considered the graded $A$-module 
\begin{eqnarray}
U:=\bigoplus_{i=1}^{\ell}A(i)_{< 0}     \label{def-U}
\end{eqnarray}
and proved that $U$ becomes a tilting object in $\stabgrmod A$.

Now we  deal with the case $d=1$. 
So let $A=\bigoplus_{i=0}^{\ell}A_i$ be a graded $1$-IG algebra. 
First note the grade $A$-module $U$ is not CM in general. 
However, since $A$ is $1$-IG,  
taking the first graded syzygy  
\begin{eqnarray}
V:=\Omega_A(U)=\bigoplus_{i\geq1}A(i)_{\geq0}  \label{def-T}
\end{eqnarray}
of $U$, we obtain a  graded CM $A$-module $V$.
It was shown that $V$ is a tilting object for certain classes of graded $1$-IG-algebras whose $0$-th subrings have finite global dimensions \cite{Kimura 1,Lu,Lu-Zhu}.
So, the following question naturally arises.

\begin{question}\label{question-syzygy-U-tilting}
Let $A$ be a finite dimensional graded $1$-IG-algebra.
Assume that $A_{0}$ has finite global dimension. 
Does $V$ become a tilting object in $\stabgrCM A$?
\end{question}

Our second result asserts that $V$ becomes a silting object which is a more general notion than tilting objects.

\begin{theorem}[{\cite[Theorem 3.0.3]{Lu-Zhu}}]\label{silting theorem}
Let $A$ be a finite dimensional non-negatively graded $1$-IG-algebra.
The following assertions hold.
\begin{enumerate}[\em (1)]
\item
$V$ is a presilting object in $\stabgrCM A$.
\item
If $A_{0}$ has finite global dimension, then $V$ is a silting object in $\stabgrCM A$.
\end{enumerate}
\end{theorem}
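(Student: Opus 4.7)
The plan for part (1) is to reduce the presilting claim to the vanishing of $\Ext^n_{\grmod A}(V, V) = 0$ for $n \geq 1$, using that $V$ is CM and positive Ext in the Frobenius category $\grCM A$ coincides with that in the ambient abelian category. The argument will be built around the defining short exact sequence
\[
0 \to V \to P \to U \to 0, \qquad P := \bigoplus_{i=1}^{\ell} A(i),
\]
and three standard vanishings: (a) $\Ext^{\geq 1}_{\grmod A}(P, -) = 0$ by projectivity of $P$, (b) $\Ext^{\geq 1}_{\grmod A}(V, P) = 0$ since $V$ is CM, and (c) $\Ext^{\geq 2}_{\grmod A}(-, P) = 0$ by the $1$-IG hypothesis. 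The crucial extra input is the \emph{degree vanishing} $\Hom_{\grmod A}(V, U(m)) = 0$ for every $m \geq 0$, which holds because $V$ is concentrated in degrees $\geq 0$ and $U$ in degrees $<0$. The case $n = 1$ then follows directly: applying $\Hom_{\grmod A}(V, -)$ to the sequence shifted by $(m)$ and using (b) together with the degree vanishing gives $\Ext^1_{\grmod A}(V, V(m)) = 0$ for all $m \geq 0$, hence in particular for $m = 0$. For $n \geq 2$, I will combine the long exact sequences arising from applying $\Hom_{\grmod A}(V, -)$, $\Hom_{\grmod A}(-, V)$, $\Hom_{\grmod A}(U, -)$, and $\Hom_{\grmod A}(-, U)$ to the defining sequence and its shifts by $(m)$; these circularly identify $\Ext^n(V, V(m))$ with $\Ext^n(U, U(m))$, $\Ext^{n-1}(V, U(m))$, and $\Ext^{n+1}(U, V(m))$, and the circle will be broken by systematically propagating the shifted degree vanishing through an induction on $n$, using (c) to control the behaviour of the $U$-terms.

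For part (2), under the hypothesis $\gldim A_0 < \infty$, the goal is $\thick_{\stabgrCM A}(V) = \stabgrCM A$. The main tool will be the truncation short exact sequences
\[
0 \to V_{k+1}(-1) \to V_k \to A_k(k) \to 0, \qquad 0 \leq k \leq \ell,
\]
where $V_k := A(k)_{\geq 0}$ (so $V_0 = A$ is projective and $V_{\ell+1} = 0$) and $A_k(k)$ is the graded $A$-module with underlying $A_0$-module $A_k$ placed in degree $0$ and $A_{>0}$ acting trivially. These realize each $A_k(k)$ as an iterated cone of graded shifts of summands of $V$. Using $\gldim A_0 < \infty$, every finitely generated $A_0$-module has a finite projective resolution whose terms are summands of $A_0 = \bigoplus_k A_k$, so every such $A_0$-module, placed in any degree as an $A$-module, lies in $\thick_{\stabgrCM A}(V)$. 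A devissage along the degree filtration of an arbitrary graded CM module $M$ --- whose subquotients are $A_0$-modules placed in varying degrees --- then shows $M \in \thick_{\stabgrCM A}(V)$, completing the argument.

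The main obstacle in (1) is that the long exact sequences in isolation yield only tautological circular identifications among the relevant Ext groups; breaking the circle requires the shifted degree vanishing together with the $1$-IG hypothesis to be invoked at every step of an induction, and the bookkeeping is the technical heart of the proof. In (2), the delicate point is that the modules $A_k(k)$ and the subquotients of the degree filtration are not themselves CM, so the generation argument must be verified in the graded singularity category $\Sing^{\ZZ}(A) \cong \stabgrCM A$ with careful attention to the interplay between graded shifts and triangulated shifts.
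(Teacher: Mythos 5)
Your proposal identifies the right tools (the sequence $0\to V\to Q\to U\to 0$, the degree vanishing, CM-ness of $V$, projectivity of $Q$, and $1$-IG), but both parts contain genuine gaps, and the paper proceeds differently.

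\textbf{Part (1).} Your $n=1$ case is correct. For $n\geq 2$, however, the four long exact sequences you list are genuinely circular and do \emph{not} close: applying $\Hom(V,-)_0$, $\Hom(-,U)_0$, $\Hom(U,-)_0$, $\Hom(-,V)_0$ to $0\to V\to Q\to U\to 0$ and killing the $Q$-terms (via CM-ness, projectivity of $Q$, and $\injdim Q\leq 1$) yields precisely the chain
\[
\Ext^n(V,V)_0\cong\Ext^{n-1}(V,U)_0\cong\Ext^n(U,U)_0\cong\Ext^{n+1}(U,V)_0\cong\Ext^n(V,V)_0,
\]
a tautology. The degree vanishing $\Hom(V,U(m))_0=0$ is a statement only at the $\Ext^0$ level, and for $n\geq 2$ none of these groups is an $\Ext^0$, so there is no place for it to enter; ``propagating the shifted degree vanishing'' has no mechanism here and the induction does not get off the ground. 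The paper breaks the circle by a structurally different move: it takes a minimal graded projective resolution $P^\bullet\to V$, observes that each $P^i\in\mod^{[0,\infty)}A$, and proves the natural isomorphism $\Hom_A(M,V)_0\cong\Hom_A(M,Q)_0$ for all $M\in\mod^{[0,\infty)}A$ (exactly your degree vanishing, but used at $\Ext^0$ of the resolution). This gives a termwise isomorphism of complexes $\Hom(P^\bullet,V)_0\cong\Hom(P^\bullet,Q)_0$, hence $\Ext^n_A(V,V)_0\cong\Ext^n_A(V,Q)_0=0$ for $n\geq 1$ by CM-ness. Equivalently, one can dimension-shift: $\Ext^n(V,V)_0=\Ext^1(\Omega^{n-1}V,V)_0$, and $\Ext^1(M,V)_0=0$ for any CM $M\in\mod^{[0,\infty)}A$ by your $n=1$ argument applied to $M$ in place of $V$. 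Either way, the degree vanishing has to be injected at the $\Ext^1$ level against a fresh CM module in non-negative degrees, not chased around a loop.

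\textbf{Part (2).} Your devissage has a degree-shift problem. The category $\thick_{\stabgrCM A}(V)$ is closed under the triangulated shift $[\pm 1]$ but \emph{not} under the grading shift $(\pm 1)$ --- and your truncation triangles involve $V_{k+1}(-1)$, which is a grading shift of a summand of $V$, not a summand of $V$. Similarly, the subquotients in the degree filtration of an arbitrary $M$ are $A_0$-modules placed in \emph{arbitrary} degrees $j$, and each is isomorphic in $\Sing^{\ZZ}A$ to $V_1(-1-j)[1]$; for $j\neq -1$ this is not visibly in $\thick(V)$. As stated, the argument only produces the $A_0$-modules sitting in one particular degree, which does not generate. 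The paper avoids this by passing to $U\cong V[1]$ in $\Sing^{\ZZ}A$ and invoking the abstract criterion from \cite[Lemma 3.5]{Yamaura}: consider the smallest subcategory $\mathcal U\subseteq\grmod A$ containing $U$ \emph{together with all graded projectives and all graded injectives} and closed under summands, extensions, kernels of epis, and cokernels of monos; this equals $\grmod A$. Since projectives and injectives die in $\Sing^{\ZZ}A$ and the closure operations are compatible with $\thick$, one concludes $\Sing^{\ZZ}A=\thick U$. The inclusion of \emph{all} $A(n)$ and $\tuD(A)(n)$ in the generating set is precisely what compensates for $\thick$ not being closed under $(m)$; your plan omits this and therefore has no way to reach the needed grading shifts.
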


\begin{remark}\label{remark-Lu-Zhu}
Here we mention that M. Lu and B. Zhu studied tilting theory for graded $1$-IG-algebras in \cite{Lu-Zhu}.
In particular, they had three results related to Question \ref{question-syzygy-U-tilting} and Theorem \ref{silting theorem} as below.
\begin{enumerate}[\rm(1)]
\item 
For a graded $d$-IG algebra $B=\bigoplus_{i=0}^{\ell} B_{i}$ where $d$ is an arbitrary natural number, they proved that $\gldim B_{0} < \infty$  if $\stabgrCM B$ has a tilting object \cite[Lemma 3.0.1]{Lu-Zhu}. 
Thus, it is necessary to assume $\gldim A_{0} < 0$ in Question \ref{question-syzygy-U-tilting}. 
\item
They obtained Theorem \ref{silting theorem} independently. 
\item
They gave an example of graded $1$-IG algebra $A$ such that $A_0$ has finite global dimension and the graded $A$-module $V$ is not a tilting object.
We will also give such an example which is different from theirs. 
\end{enumerate}
\end{remark}

\subsubsection{A class of graded $1$-IG-algebras such that $V$ becomes a tilting object.}

By Remark \ref{remark-Lu-Zhu}(3), the answer of Question \ref{question-syzygy-U-tilting} is negative.
In spite of this conclusion, $V$ often becomes a tilting object.
In fact, there are two large classes of graded $1$-IG-algebras such that $V$ gives a tilting object.
One class is explained in \ref{subsubsection-ASG}, which are infinite dimensional IG-algebras, called Artin-Schelter-Gorenstein algebras (AS-Gorenstein, for short).
Note that, in the study of tilting theory of AS-Gorenstein algebras, the $a$-invariants of the algebras play an important role, see \ref{subsubsection-ASG}.
The other class is explained as below, which are finite dimensional IG-algebras called homologically well-graded IG-algebras.

In  \cite{cotilting type}, the second and the third authors studied homologically well-graded IG-algebras. 
Let $A=\bigoplus_{i=0}^{\ell}A_i$ be a finite dimensional graded $1$-IG-algebra.
We take the minimal graded injective resolution
\[
0\to A_A \to I^0 \to I^1 \to 0
\]
of $A$, and put $I:=I^0\oplus I^1$. 
If the socle $\soc(I)$ of $I$ is concentrated in degree $\ell$, the highest possible degree, 
$A$ is called \emph{homologically well-graded}. 
This condition can be written in terms of $\Ext_{A}^{n}(A/J_A,A)$, namely, 
$A$ is  homologically well-graded if and only if $\Ext_{A}^{n}(A/J_A,A)$ is concentrated in degree $\ell$ for $n=0,1$.
It follows from one of main result \cite[Theorem 1.3]{cotilting type} that 
if $A$ is a homologically well-graded $1$-IG algebra with $\gldim A_{0} < \infty$, then 
$V$ is a tilting object in $\stabgrCM A$.  


%

The result as above and the result in \ref{subsubsection-ASG} suggest that for a graded $1$-IG-algebra $A$, 
the degrees in which the extension groups $\Ext_{A}^{n}(S,A)$ from a graded simple module $S$ to $A$ lives are important for $V$ to be a tilting object.
From this viewpoint, we introduce a new invariant $g(A)$ of a finite dimensional graded algebra $A$, which can be looked as a variant of the $a$-invariant.
Then, we show that it provides a sufficient condition that $V$ becomes a tilting object.  

Although our invariant $g(M)$ is defined for a graded module $M$ over a finite dimensional graded algebra which is not necessarily IG, 
we give the definition for a graded $1$-IG algebra $A$ in this introduction.
Again, let $A=\bigoplus_{i=0}^{\ell}A_i$ be a finite dimensional graded $1$-IG-algebra.
Now, we define an integer $g(A)$ as follows: 
\[
g(A) = \sup\{ i \in \ZZ \ | \ \Ext_{A}^{n}(A/J_A, A)_{-i} \neq 0 \textup{ for some } n=0,1 \}.
\]
It follows from this definition that 
\[
\{ i\in\ZZ \ | \ \Ext_{A}^{n}(A/J_A, A)_{i} \neq 0 \textup{ for some } n=0,1 \} \subset \left[\, -g(A), \ell\, \right].
\]
Note that $A$ is homologically well-graded if and only if $g(A)=-\ell$. 
Our third result asserts that the answer of Question \ref{question-syzygy-U-tilting} is affirmative, if $g(A)$ are non-positive.

\begin{theorem}\label{tilting theorem 1}
Let $A$ be a finite dimensional non-negatively graded $1$-IG algebra such that $A_0$ has finite global dimension.
If $g(A)\leq 0$, then $V$ is a tilting object in $\stabgrCM A$. 
\end{theorem}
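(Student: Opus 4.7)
Since Theorem~\ref{silting theorem} already establishes that $V$ is a silting object of $\stabgrCM A$, the remaining task is to show
\[
\Hom_{\stabgrCM A}(V,V[-n])=0 \quad \text{for every } n\geq 1,
\]
which will upgrade silting to tilting.

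The plan is to work in the singularity category. Since $A$ is $1$-IG, there is a triangle equivalence $\stabgrCM A\simeq \sfD^{\mrb}(\grmod A)/\thick(\grproj A)$. The canonical short exact sequence $0\to V\to P\to U\to 0$ with $P=\bigoplus_{i=1}^{\ell}A(i)$ projective yields a distinguished triangle in which $P$ vanishes, so $V\cong U[-1]$ in the singularity category. Consequently the desired vanishing is equivalent to
\[
\Hom(U,\,U[-n])=0 \quad (n\geq 1),
\]
computed in the singularity category.

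The next step is to reduce to graded simples. Each summand $A(i)_{<0}\cong A/A_{\geq i}$ of $U$ (with grading shift) admits a finite filtration in $\grmod A$ whose subquotients are graded shifts $S(-k)$ of simple $A_0$-modules $S$, with $k\in\{1,\dots,i\}$. Iteratively applying the long exact Hom-sequence in the singularity category to these filtrations, in each of the two arguments, reduces the required vanishing to
\[
\Hom_{\stabgrCM A}\bigl(S(-k),\,S'(-j)[-m]\bigr)=0
\]
for all simple $A_0$-modules $S,S'$, all $k,j\in\{1,\dots,\ell\}$, and all $m\geq 1$.

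The heart of the argument is the evaluation of these Hom groups and the appeal to $g(A)\leq 0$. Because $A_0$ has finite global dimension and $A$ is $1$-IG, each graded simple admits a bounded resolution on which the singularity-category Hom can be expressed as a concrete computation involving $\Ext^n_A(A/J_A,A)$ for $n=0,1$. The hypothesis $g(A)\leq 0$ is exactly the assertion that $\Ext^n_A(A/J_A,A)_d=0$ for all $d<0$ and $n=0,1$, so these Ext groups are supported in non-negative internal degrees. Matching this support against the internal shifts $-k,-j$ and the homological shift $[-m]$ introduced in the previous step forces the desired vanishing; together with Theorem~\ref{silting theorem} this yields that $V$ is tilting. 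The main obstacle is precisely this degree bookkeeping: verifying that for each $m\geq 1$ the contributing internal degree falls in the negative range in which $g(A)\leq 0$ supplies the vanishing.
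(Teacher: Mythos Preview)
Your opening moves are fine: passing to the singularity category, identifying $V\cong U[-1]$ there, and filtering $U$ by graded simples are all legitimate, and they do reduce the problem to showing
\[
\Hom_{\Sing^{\ZZ}A}\bigl(S(k),S'(j)[-m]\bigr)=0\qquad(m\geq1,\ k,j\in\{1,\dots,\ell\}).
\]
The gap is in your last paragraph. You assert that these Hom-spaces can be ``expressed as a concrete computation involving $\Ext^{n}_{A}(A/J_{A},A)$ for $n=0,1$'' and that the hypothesis $g(A)\leq0$ then forces the vanishing ``by degree bookkeeping''; you even flag this bookkeeping as the main obstacle. But no such expression is given, and there is no routine one. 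For $m\geq1$ the group $\Hom_{\Sing^{\ZZ}A}(S,S'(p)[-m])$ is a \emph{negative} Tate cohomology group; unwinding it as $\stabHom_{A}(\Omega S,\Omega^{m+1}S'(p))$ one sees that each additional syzygy can push the internal degree support up by as much as $\ell$, so after $m$ steps the relevant degrees range over an interval of width $\sim m\ell$. The condition $g(A)\leq0$ only bounds the socle degrees of $I^{0},I^{1}$ in the minimal injective resolution of $A$; it does not obviously constrain $\Omega^{m+1}S'$ for all $m$. In short, the ``bookkeeping'' you defer is not a calculation but the entire content of the theorem, and nothing you have written indicates how to carry it out.

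The paper takes a completely different route that sidesteps this difficulty. Via the quasi-Veronese construction it reduces to the case $\ell=1$, i.e.\ $A=\Lambda\oplus C$ a trivial extension with $\gldim\Lambda<\infty$, where $g(A)\leq0$ translates into the asid number satisfying $\alpha(C)\leq1$ (Lemma~\ref{qv-g-asid}). The key structural input is then Theorem~\ref{higehaji-equiv}: under $\alpha(C)\leq1$ there is a triangle equivalence $\sfT\xrightarrow{\ \sim\ }\stabgrCM A$ from a thick subcategory $\sfT\subseteq\sfD^{\mrb}(\mod\Lambda)$ that contains $C_{\Lambda}$ and sends $C_{\Lambda}\mapsto C_{A}\cong V$. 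Negative self-extensions of $V$ are thereby identified with $\Hom_{\sfD^{\mrb}(\mod\Lambda)}(C_{\Lambda},C_{\Lambda}[-m])$, which vanish simply because $C_{\Lambda}$ is a module. This equivalence is what replaces your missing computation; without it (or a genuine substitute), the argument is incomplete.
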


We remark that under the setup of Theorem \ref{tilting theorem 1}, 
the complement condition  $g(A) >0$ dose not implies $V$ is not a tilting object. 
Both cases occur that $V$ is  a tilting object or not (see Example \ref{example-g(A)=1}).

\subsubsection{A class of infinite dimensional graded $1$-IG-algebras with tilting objects.} \label{subsubsection-ASG}

Here we explain known results of tilting theory for Artin-Schelter-Gorenstein algebras.
It has been proved that the $a$-invariant gives a characterization of existence of a tilting object.

In \cite{BIY}, Buchweitz, Iyama and the third author studied tilting theory for commutative graded $1$-IG-algebras. 
Let $R=\bigoplus_{i=0}^{\infty}R_{i}$ be a  commutative graded $1$-IG-algebra such that 
the degree $0$-part $R_{0}$ coincides with the base field $K$ (i.e., $R$ is connected). 
The crucial point here is that in the commutative case $R$ satisfies the \emph{Artin-Schelter-Gorenstein} condition, 
that is that $\Ext_{R}^{n}(K,R) = 0$ for $n=0,1, \dots, d-1$ and $\Ext_{R}^{d}(K,R)=K$ as ungraded module 
(with $d=1$ in the situation of \cite{cotilting type}). 
The negative of degree in which the simple graded $R$-module $\Ext_{R}^{d}(K,R)$ concentrated is called the \emph{$a$-invariant} \cite{Goto-Watanabe}. 
The authors of \cite{BIY} dealt with the subcategory $\grCM_{0}R$ of $\grCM R$ consisting of CM-modules 
which are projective outside of the closed point. 
They proved that an analogue of $V$ gives a silting object in the stable category $\stabgrCM_{0}R$ and moreover that it becomes a tilting object precisely when the $a$-invariant is non-negative or $R$ is regular. 

Recently Iyama, Ueyama and the first author \cite{IKU} generalize these results into the case where $R$ is not necessarily commutative and connected. 
However the still AS-Gorenstein condition plays a crucial role. 
We mention that in the case where $R$ is not connected, the AS-Gorenstein condition is formulated in terms of $\Ext_{A}^{n}(S,R)$ 
where $S$ is a  graded simple $R$-modules concentrated in $0$-th degree.

\subsection{Application of Theorem \ref{tilting theorem 1} to truncated preprojective algebras}

Finally, we investigate an existence of tilting objects for a certain class of $1$-IG-algebras.
For a finite acyclic quiver $Q$ and an element $w$ of the Coxeter group $W_Q$ of $Q$, Buan-Iyama-Reiten-Scott \cite{BIRSc} constructed a graded $1$-IG-algebra $\Pi(Q)_w$.
%
The first author has studied  an existence of tilting objects in $\stabgrCM \Pi(Q)_w$.
He gave constructions of tilting objects when $w$ is $c$-sortable \cite{Kimura 1}, and when $w$ is $c$-starting or $c$-ending \cite{Kimura 2}.
Moreover, he calculated the endomorphism algebras of his tilting objects, and showed that those global dimensions are at most two.
Therefore, $\stabgrCM \Pi(Q)_w$ are equivalent to the derived categories of modules over finite dimensional algebras.

In this paper, we study existence of tilting objects in $\stabgrCM \Pi(Q)_w$ by applying Theorem \ref{tilting theorem 1}.
Our final result asserts that a tilting object in $\stabgrCM \Pi(Q)_w$ exists if the underlying graph of $Q$ is tree.

\begin{theorem}[Theorem \ref{tilting theorem}]\label{tree-tilting}
Let $Q$ be a finite acyclic quiver whose underlying graph is tree.
Then, for any $w\in W_Q$, $\stabgrCM\Pi(Q)_w$ has a tilting object. 
\end{theorem}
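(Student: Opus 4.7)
The plan is to verify the hypotheses of Theorem \ref{tilting theorem 1} for the algebra $A=\Pi(Q)_w$. From the work of Buan--Iyama--Reiten--Scott \cite{BIRSc}, $\Pi(Q)_w$ is a finite dimensional non-negatively graded $1$-Iwanaga-Gorenstein algebra, and its degree zero part $(\Pi(Q)_w)_0$ is a finite product of copies of the base field (coming from the vertex idempotents not killed by $I_w$), hence semisimple. Thus the conditions that $A$ is $1$-IG and that $A_0$ has finite global dimension are automatic for every quiver $Q$, and the essential content of Theorem \ref{tree-tilting} is the single bound $g(\Pi(Q)_w)\leq 0$ under the tree hypothesis on $Q$.

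To establish this bound, I would analyse the minimal graded projective resolution of each graded simple $\Pi(Q)_w$-module $S_j=e_j(\Pi(Q)_w/J_A)$. Since $\Pi(Q)_w$ is $1$-IG, controlling the degrees in which $\Ext^n_A(A/J_A,A)$ lives for $n=0,1$ is equivalent to controlling the internal degrees of the generators of $\Omega^0(S_j)$ and $\Omega^1(S_j)$ for every vertex $j$. Using the description of $\Pi(Q)_w$ as the quotient $\Pi(Q)/I_w$ from \cite{BIRSc}, these generators are indexed by certain paths in the doubled quiver $\overline{Q}$ starting at $j$, and their internal degrees are determined by the net grading contribution of such paths together with the truncation dictated by a reduced expression of $w$.

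The tree hypothesis enters decisively here: between any two vertices of a tree there is a unique reduced path in $\overline{Q}$, so no cycle in $Q$ can wind a path around to create a positive degree shift. A vertex-by-vertex bookkeeping of where the generators of $\Omega^n(S_j)$ sit then yields that each of them has non-positive internal degree, which is exactly $g(\Pi(Q)_w)\leq 0$. Conceptually, this is consistent with the negative examples mentioned in Remark \ref{remark-Lu-Zhu}(3), where positive-degree contributions arise precisely from monodromy around cycles.

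The main obstacle is carrying out this degree counting uniformly in $w\in W_Q$. I expect the cleanest implementation to proceed by induction on the length $\ell(w)$: peeling off either the initial or the final simple reflection of a reduced expression of $w$ and using a silting reduction in the spirit of the first author's \cite{Kimura 1, Kimura 2} to reduce to a shorter element $w'$ or to a tree $Q'$ obtained by removing a leaf. Once $g(\Pi(Q)_w)\leq 0$ is established, Theorem \ref{tilting theorem 1} delivers $V=\bigoplus_{i\geq 1}\Pi(Q)_w(i)_{\geq 0}$ as the required tilting object in $\stabgrCM\Pi(Q)_w$, and the combination with Theorem \ref{Q(c)<->(d)} additionally gives a triangle equivalence $\stabgrCM\Pi(Q)_w\cong\sfD^{\mrb}(\mod\End_{\stabgrCM\Pi(Q)_w}(V))$.
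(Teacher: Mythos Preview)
Your proposal rests on two claims that are both incorrect. First, the degree-zero part $(\Pi(Q)_w)_0$ is \emph{not} a product of copies of the base field: by construction the arrows of $Q$ have degree $0$ and only the reversed arrows $\alpha^*$ have degree $1$, so $\Pi(Q)_0 = KQ$, and in the situations relevant to the proof (see Theorem \ref{special-case}) one has $(\Pi(Q)_w)_0 = KQ$, a hereditary algebra of global dimension $1$, not $0$. Second, and more seriously, the inequality $g(\Pi(Q)_w) \leq 0$ does \emph{not} hold for every tree $Q$ and every $w$; the paper states this explicitly in the paragraph following Theorem \ref{tree-tilting}, and it is precisely why the argument cannot simply invoke Theorem \ref{tilting theorem 1} for the given orientation of $Q$. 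Your heuristic that positive contributions to $g$ arise only from monodromy around cycles is therefore unfounded, and the vague ``vertex-by-vertex bookkeeping'' you sketch cannot succeed as stated.

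The paper's actual argument is quite different and hinges on a fact you do not mention: the triangulated category $\stabgrCM \Pi(Q)_w$ is invariant under sink reflections of $Q$ (Proposition \ref{reflection proposition}, proved via a graded Morita equivalence). Given a reduced expression $w = s_{i_1}\cdots s_{i_r}$, one first reduces to $Q_0 = \supp(w)$ (Lemma \ref{rejection lemma}), then extracts from $(i_1,\dots,i_r)$ a subsequence running over all vertices. The tree hypothesis is used only to guarantee that one can re-orient $Q$ by iterated sink reflections to an acyclic $Q'$ with $|Q'| = |Q|$ for which that subsequence is admissible. For this $Q'$ one has $(\Pi(Q')_w)_0 = KQ'$ and, since each $(\Pi(Q')_w)_i$ is a quotient of $\tau^{-i}(KQ')$, no projective $KQ'$-summand in positive degree; Corollary \ref{cor of main theorem 2} then yields $g(\Pi(Q')_w) \leq 0$, and Theorem \ref{tilting theorem 1} is applied to $\Pi(Q')_w$, not to $\Pi(Q)_w$ itself.
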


We note that $\Pi(Q)_w$ does not satisfy $g(\Pi(Q)_w)\leq 0$ in general, and so we could not apply Theorem \ref{tilting theorem 1} directly. 
To circumvent this difficulty, we show that the category $\stabgrCM\Pi(Q)_w$ is invariant under sink reflections of $Q$.
We also show that  $Q$ can be changed  to a quiver $Q'$ via an iterated sink reflections, such that  $g(\Pi(Q')_w)\leq 0$.
Now we can apply  Theorem \ref{tilting theorem 1} to $\stabgrCM\Pi(Q)_w\cong \stabgrCM \Pi(Q')_w$ and  obtain  the desired conclusion. 

\smallskip
\smallskip

This paper is organized as follows.
In Section 2, we recall notions and facts which will be needed in the proofs of main results.
In Section 3, we give proofs of Theorem \ref{Q(c)<->(d)}, Theorem \ref{silting theorem} and Theorem \ref{tilting theorem 1}.
In Section 4, we recall the definition of $\Pi(Q)_w$ and give a proof of Theorem \ref{tree-tilting}.

\subsection{Conventions}
We fix a base field $K$.
An algebra means a finite dimensional $K$-algebra. 
For an algebra $\Lambda$, we deal with finitely generated right $\Lambda$-modules.
We denote by $\mod\Lambda$ the category of finitely generated right $\Lambda$-modules, 
$\proj\Lambda$ the full subcategory  of $\mod\Lambda$ whose objects are projective $\Lambda$-modules. 
Also when we consider $\Lambda$-$\Lambda$-bimodules, we assume that they are finite dimensional.

A $\ZZ$-graded algebra $A=\bigoplus_{i\in\ZZ}A_i$ is called non-negatively graded if $A_i=0$ for all $i<0$.
In this paper, graded algebras means finite dimensional non-negatively graded algebras.
For a graded algebra $A$, we deal with finitely generated right graded $A$-modules $X=\bigoplus_{i\in\ZZ}X_i$.
For a graded $A$-module $X$ and $n\in\ZZ$, we denote by $X(n)$ the $n$-th degree shift of $X$, i.e. $X(n):=X$ as an $A$-module and the grading is defined by $X(n)_i:=X_{i+n}$.
For a graded $A$-module $X$ and $n\in\ZZ$, we define a graded $A$-submodule $X_{\geq n}$ of $X$ by $X_{\geq n}:=\bigoplus_{i\geq n}X_i$.
Also, we define a graded factor $A$-module $X_{<n}$ of $X$ by $X_{<n}:=X/X_{\geq n}$.
We will consider the socle $\soc (X)$ of a graded $A$-module $X$.
Note that $\soc (X)$ is a graded submodule of $X$. %

We denote by $\GrMod A$ the category of all right graded $A$-modules, and by $\grmod A$ the full subcategory of $\GrMod A$ consisting of finitely generated right graded $A$-modules.
For $X,Y\in \GrMod A$, the morphism space from $X$ to $Y$ is 
\[
\Hom_A(X,Y)_0:=\{ f\in\Hom_{A}(X,Y) \ | \  f(X_i) \subset Y_i \mbox{ for all $i\in\ZZ$ } \}.
\]
It is known that  the following equality holds if $X$ is finitely generated.
\[
\Hom_A(X,Y)=\bigoplus_{i\in \ZZ}\Hom_A(X,Y(i))_0.
\]
In particular, $\End_A(X)$ has a structure of $\ZZ$-graded algebras with $\End_A(X)_i=\Hom_A(X,X(i))_0$.
Also $\Hom_A(X,Y)$ can be regarded as a $\ZZ$-graded  $\End_A(X)$-module with $\Hom_A(X,Y)_i=\Hom_A(X,Y(i))_0$.

We denote by $\proj^{\ZZ}A$ (resp. $\inj^{\ZZ}A$) the full subcategory of $\grmod A$ whose objects are graded projective (resp. injective) $A$-modules. 
For a subset $L$ of $\ZZ$, we define a full subcategory $\mod^{L} A$ of $\grmod A$ by 
\[
\mod^{L}A=\{M \in\grmod A \ | \ M_i=0 \mbox{ for all } i\not\in L \}.
\]
We refer to \cite{Green-Gordon1} for basic facts about graded modules over graded artin algebras.

Let $A$ be a graded IG-algebra.
For $X,Y\in\stabgrCM A$, we denote by $\underline{\Hom}_A(X,Y)_0$ the morphism space from $X$ to $Y$ in $\stabgrCM A$.

For an additive category $\mathcal{A}$, we denote by $\sfK^{\mrb}(\mathcal{A})$ the homotopy category of bounded complexes over $\mathcal{A}$.
For an abelian category $\mathcal{A}$, we denote by $\sfD^{\mrb}(\mathcal{A})$ the bounded derived category of  $\mathcal{A}$.

\bigskip

\noindent
\textbf{Acknowledgement.}
The authors thank Osamu Iyama for his valuable comments and for informing us that Theorem \ref{silting theorem} was proved in the paper \cite{Lu-Zhu}.
They also grateful to Ming Lu and Bin Zhu for exchanging information about this subject with us. 

The first author is supported by Grant-in-Aid for JSPS Fellows JP22J01155,
and was partially supported by the Alexander von Humboldt Foundation in
the framework of an Alexander von Humboldt Professorship endowed by the
German Federal Ministry of Education and Research.
The second author was partially supported by JSPS KAKENHI Grant Number JP21K03210. 
The third author was supported by JSPS KAKENHI Grant Numbers JP26800007, JP18K13387, JP21K03160.

\section{Preliminaries}

In this section, we collect basic notions and facts which will be used in later sections.

\subsection{Tilting theory for algebraic triangulated categories}\label{subsec-general-tilting-theory}
We recall tilting theory for algebraic triangulated categories.

\begin{definition}
Let $\mathscr{T}$ be a triangulated category.
For an object $T\in\mathscr{T}$, we denote by $\thick T$ the smallest thick subcategory of $\mathscr{T}$ containing $T$.
\begin{itemize}
\item 
An object $T$ in $\mathscr{T}$ is called a \emph{presilting object} if it satisfies $\Hom_{\mathscr{T}}(T,T[i])=0$ for all $i>0$.
\item 
A presilting object $T$ in $\mathscr{T}$ is called a \emph{silting object} if it satisfies $\mathscr{T}=\thick T$.
\item 
An object $T$ in $\mathscr{T}$ is called a  \emph{pretilting object} if it satisfies $\Hom_{\mathscr{T}}(T,T[i])=0$ for all $i\neq0$.
\item 
A pretilting object $T$ in $\mathscr{T}$ is called a  \emph{tilting object} if it satisfies $\mathscr{T}=\thick T$.
\end{itemize}
\end{definition}

\begin{theorem}[{\cite{Keller:ddc}}]\label{Keller-tilting}
Let $\mathscr{T}$ be an algebraic triangulated category.
Assume that $\mathscr{T}$ is a Hom-finite $K$-linear Krull-Schmidt category.
If $T$ is a tilting object in $\mathscr{T}$, then there is an equivalence
\[
\mathscr{T} \cong \sfK^{\mrb}(\proj\End_{\mathscr{T}}(T))
\]
of triangulated categories.
If moreover, $\End_{\mathscr{T}}(T)$ has finite global dimension, then 
$\sfK^{\mrb}(\proj\End_{\mathscr{T}}(T))\cong \sfD^{\mrb}(\mod\End_{\mathscr{T}}(T))$ holds.
Consequently the above equivalence gives an equivalence 
\[
\mathscr{T} \cong \sfD^{\mrb}(\mod\End_{\mathscr{T}}(T))
\]
of triangulated categories.
\end{theorem}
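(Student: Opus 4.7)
The plan is to invoke a DG enhancement of $\mathscr{T}$ and then apply Keller's Morita theorem for DG algebras. Since $\mathscr{T}$ is algebraic, by definition there is a pretriangulated DG category $\mathcal{A}$ together with a triangle equivalence $H^{0}(\mathcal{A}) \simeq \mathscr{T}$. Lift $T$ to an object of $\mathcal{A}$ and set $B := \REnd_{\mathcal{A}}(T)$, a DG algebra with $H^{0}(B) = \End_{\mathscr{T}}(T) = \Gamma$. The tilting condition $\Hom_{\mathscr{T}}(T,T[i]) = 0$ for $i \neq 0$ becomes $H^{i}(B) = 0$ for $i \neq 0$, so that $B$ is formal, that is, there is a zig-zag of DG algebra quasi-isomorphisms between $B$ and $\Gamma$ viewed as a DG algebra concentrated in degree $0$.

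Next I would invoke Keller's DG Morita theorem, which asserts that $\RHom_{\mathcal{A}}(T,-)$ induces a triangle equivalence between the thick subcategory of $H^{0}(\mathcal{A})$ generated by $T$ and $\Perf(B)$, the perfect derived category of $B$. The tilting hypothesis $\mathscr{T} = \thick T$ identifies the source with all of $\mathscr{T}$, and the formality of $B$ yields $\Perf(B) \cong \Perf(\Gamma)$. Since $\Gamma$ is an ordinary algebra concentrated in degree $0$, the perfect derived category $\Perf(\Gamma)$ coincides with $\sfK^{\mrb}(\proj \Gamma)$ by standard identification, which gives the first equivalence. The Hom-finiteness and Krull--Schmidt assumptions on $\mathscr{T}$ are used here to ensure that $\Gamma$ is a finite-dimensional $K$-algebra, so that $\proj\Gamma$ on both sides coincides with the category of finitely generated projective $\Gamma$-modules.

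For the second assertion, assume $\gldim \Gamma < \infty$. Then every object of $\mod \Gamma$ admits a bounded resolution by finitely generated projective modules, so the canonical fully faithful functor $\sfK^{\mrb}(\proj \Gamma) \hookrightarrow \sfD^{\mrb}(\mod \Gamma)$ is essentially surjective, and hence an equivalence. Composing with the first equivalence yields $\mathscr{T} \cong \sfD^{\mrb}(\mod \Gamma)$.

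The principal obstacle is the first step: invoking a DG enhancement and establishing formality of $B$ rely on the non-trivial machinery of pretriangulated DG categories and the comparison between $H^{0}$ of the DG enhancement and the given triangulated structure on $\mathscr{T}$. Once $B \simeq \Gamma$ is in place, the remainder of the argument is essentially formal, reducing to well-known properties of $\Perf$ over ordinary algebras and the standard comparison with the bounded derived category under a finite-global-dimension hypothesis.
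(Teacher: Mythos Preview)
The paper does not prove this statement; it is quoted as a known result and attributed to Keller \cite{Keller:ddc}. So there is no ``paper's own proof'' to compare against.

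Your sketch is essentially the standard argument underlying Keller's theorem and is correct in outline. One small terminological remark: the step from $H^{i}(B)=0$ for $i\neq 0$ to $B\simeq \Gamma$ is not really ``formality'' in the usual sense but rather the simpler observation that the truncation maps $B \leftarrow \tau_{\le 0}B \to H^{0}(B)$ are both quasi-isomorphisms when the cohomology is concentrated in degree $0$; this gives the required zig-zag directly. Also, the Hom-finite Krull--Schmidt hypothesis guarantees $\Gamma$ is finite-dimensional over $K$, which you note, and more subtly ensures the idempotent-splitting needed for the identification $\thick T \simeq \Perf(B)$ to go through cleanly. With those clarifications your argument is sound.
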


The stable category of graded Cohen-Macaulay modules of a graded
IG-algebra is a typical example of a triangulated category which satisfies the assumption of Theorem \ref{Keller-tilting}.
Therefore, we can apply the above theorem to that categories. 

%
%
%
%
%
%
%

\subsection{Realizing stable categories as singularity categories}
We recall Buchweitz's equivalence which shows that the stable categories of Cohen-Macaulay modules can be realized as certain Verdier quotients of the derived categories.
For a graded algebra $A$, the Verdier quotient 
\[
\Sing^{\ZZ}A:=\sfD^{\mrb}(\grmod A)/\sfK^{\mrb}(\proj^{\ZZ}A)
\]
is called the \emph{singularity category} of $A$.
Buchweitz showed that the following equivalence exists if $A$ is IG.

\begin{theorem}[{\cite{Buchweitz}}]\label{Buchweitz-equiv}
Let $A=\bigoplus_{i=0}^{\ell}A_i$ be a graded IG-algebra.
Then, there is an equivalence $\beta:\stabgrCM A\to\Sing^{\ZZ} A$ of triangulated categories such that
the following diagram commutes.
\[
\xymatrix{
\grCM A \ar[r]^-{\mathrm{inc.}} \ar[d]_-{\mathrm{nat.}} & \grmod A \ar[r]^-{\mathrm{inc.}} & \sfD^{\mrb}(\grmod A) \ar[d]^-{\mathrm{quot.}} \\
\stabgrCM A \ar@{.>}[rr]_{\beta}^{\cong} & & \Sing^{\ZZ} A
}
\]
\end{theorem}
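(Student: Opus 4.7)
The plan is to follow Buchweitz's classical argument, adapted to the non-negatively graded setting. First, I would construct $\beta$ by composing the inclusion $\grCM A \hookrightarrow \grmod A$ with the natural functor $\grmod A \to \sfD^{\mrb}(\grmod A) \to \Sing^{\ZZ} A$. Since graded projective modules vanish in the Verdier quotient, this composite annihilates morphisms factoring through $\proj^{\ZZ} A$, so it descends to an additive functor $\beta \colon \stabgrCM A \to \Sing^{\ZZ} A$ making the given diagram commute. To equip $\beta$ with a triangulated structure, I would use that the Frobenius suspension on $\stabgrCM A$ is realized by a conflation $0 \to X \to I \to \Sigma X \to 0$ where $I$ is graded projective-injective; since $I$ vanishes in $\Sing^{\ZZ} A$, the image of this conflation in $\sfD^{\mrb}(\grmod A)$ produces a canonical isomorphism $\beta(\Sigma X) \cong \beta(X)[1]$, and distinguished triangles in $\stabgrCM A$ then map to distinguished triangles in $\Sing^{\ZZ} A$.

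Next I would verify essential surjectivity. Given any $X \in \sfD^{\mrb}(\grmod A)$, take a graded projective resolution of $X$ and let $\Omega^n(X)$ denote the $n$-th syzygy for some $n \ge d$. A standard application of stupid truncation of the resolution produces a bounded complex of graded projectives that fits into a triangle in $\sfD^{\mrb}(\grmod A)$ relating $X$ with $\Omega^n(X)[n]$; hence $X \cong \Omega^n(X)[n]$ in $\Sing^{\ZZ} A$. Because $A$ has graded injective dimension at most $d$ on the right, one computes $\Ext^{i}_A(\Omega^n(X), A) = \Ext^{i+n}_A(X, A) = 0$ for all $i > 0$, so $\Omega^n(X) \in \grCM A$ and $X$ lies in the essential image of $\beta$.

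The main obstacle is full faithfulness. For $X, Y \in \grCM A$, I would compute
\[
\Hom_{\Sing^{\ZZ} A}(X, Y) \;=\; \varinjlim_{q \colon X' \to X} \Hom_{\sfD^{\mrb}(\grmod A)}(X', Y),
\]
where the colimit is over morphisms $q$ whose cone lies in $\sfK^{\mrb}(\proj^{\ZZ} A)$, and identify the result with $\underline{\Hom}_A(X, Y)_0$. The crucial inputs are (i) the Cohen-Macaulay vanishing $\Ext^{>0}_A(X, A) = 0$, which allows one to invert any such $q$ after adjusting by a morphism that factors through a graded projective, and (ii) the bound on the graded injective dimension of $A$, which guarantees the colimit stabilizes at a finite stage. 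Together they yield a bijection $\underline{\Hom}_A(X, Y)_0 \cong \Hom_{\Sing^{\ZZ} A}(X, Y)$. The most delicate part is the careful bookkeeping of roofs in the Verdier calculus and the verification that morphisms annihilated in $\Sing^{\ZZ} A$ are precisely those factoring through $\proj^{\ZZ} A$; the argument is formally parallel to Buchweitz's original treatment in the ungraded case.
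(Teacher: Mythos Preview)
The paper does not give a proof of this theorem: it is stated as a known result and attributed to Buchweitz's preprint (reference \cite{Buchweitz} in the paper). There is therefore no ``paper's own proof'' to compare against; the statement is simply quoted and then used as a black box (for instance in the proof of Theorem~\ref{silting theorem} and in Theorem~\ref{higehaji-equiv}).

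Your outline is a faithful sketch of the classical Buchweitz argument and is essentially correct. A couple of minor remarks: for the triangulated structure of $\beta$ you need that projectives coincide with injectives in $\grCM A$, which follows from the IG hypothesis; and in the full faithfulness step the cleanest way to organize the computation is to show that for $X,Y\in\grCM A$ one has $\Hom_{\Sing^{\ZZ}A}(X,Y)\cong\varinjlim_n \Hom_{\grmod A}(\Omega^n X,\Omega^n Y)_0$, and that this colimit stabilizes to $\underline{\Hom}_A(X,Y)_0$ already at $n=0$ because syzygies of CM modules are CM and $\Ext^{>0}_A(-,A)$ vanishes on them. This avoids some of the roof bookkeeping you flagged as delicate.
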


\subsection{Quasi-Veronese constructions}
In this subsection, we recall  basic facts about quasi-Veronese construction which play an important role in the study of representation theory of graded algebras. 
Let $A=\bigoplus_{i=0}^{\ell}A_i$ be a graded algebra with $A_{\ell}\neq0$. 
We recall that  the \emph{Beilinson algebra} $\nabla A$
and its bimodule $\Delta A$ are defined to be 
\begin{eqnarray}
\nabla A: = 
\begin{pmatrix} 
A_{0} & A_{1} & \cdots & A_{\ell -1} \\
0         & A_{0} & \cdots  & A_{\ell -2} \\
\vdots & \vdots     &        & \vdots \\
0         & 0   & \cdots  & A_{0}
\end{pmatrix}, \ \ \ 
\Delta A: = 
\begin{pmatrix} 
A_{\ell} & 0 & \cdots & 0 \\
A_{\ell- 1} & A_{\ell} & \cdots  &0 \\
\vdots & \vdots     &        & \vdots \\
A_{1} & A_{2}   & \cdots  & A_{\ell}
\end{pmatrix} 
\label{Beilinson}
\end{eqnarray}
where the algebra structure and the bimodule structure are 
given by the  matrix  multiplications. 
Then, 
we consider the trivial extension algebra 
\[
A^{[\ell]} := \nabla A \oplus \Delta A
\]
of $\nabla A$ by $\Delta A$.
We regard $A^{[\ell]}$ as a graded algebra with $(A^{[\ell]})_0=\nabla A$ and $(A^{[\ell]})_1= \Delta A$.
This graded algebra $A^{[\ell]}$ coincides with the \emph{$\ell$-th quasi-Veronese algebra} of $A$ introduced by Mori \cite[Definition 3.10]{Mori B-construction}.

By \cite[Lemma 3.12]{Mori B-construction}, 
$A$ and $A^{[\ell]} $ are graded Morita equivalent to each other. 
More precisely, 
the functor $\mathsf{qv}$ below gives a $K$-linear equivalence. 
\[
\begin{split}
 & \mathsf{qv}: \GrMod A \xrightarrow{ \ \cong  \ } \GrMod A^{ [ \ell ]}, \\
& \mathsf{qv}(M) := \bigoplus_{i \in \ZZ} \mathsf{qv}(M)_{i} \ \mbox{ where } \
\mathsf{qv}(M)_{i} =  M_{i\ell} \oplus M_{i \ell +1 } \oplus \cdots \oplus M_{( i+ 1)\ell -1 }.
\end{split}
\]
We will need the following observations.

\begin{lemma}\label{basics-quasi-V}
We keep the notations as above. 
The following assertions hold.
\begin{enumerate}[\rm (1)]
\item
The following diagram is commutative:
\[
\xymatrix{
\GrMod A \ar[r]_-{\mathsf{qv}}^-{\cong} \ar[d]_-{(\ell)} & \GrMod A^{[\ell]} \ar[d]^-{(1)} \\
\GrMod A \ar[r]_-{\mathsf{qv}}^-{\cong} & \GrMod A^{[\ell]}.
}
\]

\item 
The following equalities hold.
\begin{enumerate}[\rm (i)]
\item 
$\qv(\bigoplus_{r=0}^{\ell-1}A(-r))=A^{[\ell]}$.
\item 
$\qv(\bigoplus_{r=0}^{\ell-1} A/J_A(-r))=A^{[\ell]}/J_{A^{[\ell]}}$.
\item
$\qv(U)=\nabla A(1)$ and $\qv(V)=\Delta A$.
\end{enumerate}
Here $U=\bigoplus_{i=1}^{\ell}A(i)_{<0}$ as in \eqref{def-U}, and $V=\Omega_A(U)=\bigoplus_{i\geq1}A(i)_{\geq0}$ as in \eqref{def-T}.
\item
We have the following equivalences as restrictions of $\qv$.
\begin{enumerate}[\rm (i)]
\item 
$\qv:\grproj A \cong \grproj A^{[\ell]}$.
\item 
$\qv:\grinj A \cong \grinj A^{[\ell]}$.
\end{enumerate}

\item
$A$ is a graded $d$-IG-algebra if and only if  so is $A^{[\ell]}$.
If this is the case, we have the following equivalences from $\qv$.
\begin{enumerate}[\rm (i)]
\item
$\qv:\grCM A \cong \grCM A^{[\ell]}$.
\item
$\qv:\stabgrCM A \cong \stabgrCM A^{[\ell]}$.
\end{enumerate}
\end{enumerate}
\end{lemma}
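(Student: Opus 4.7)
The plan is to prove all four items by unwinding the definitions of $\qv$, $\nabla A$, $\Delta A$ and $A^{[\ell]}$ directly. The only nontrivial combinatorics is the re-indexing needed to match rows in (2)(iii); the remaining parts follow formally once (1) and (2) are in hand.

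For (1), I would simply compute: since $M(\ell)_j = M_{j+\ell}$, we get $\qv(M(\ell))_i = \bigoplus_{j=0}^{\ell-1} M_{i\ell+\ell+j} = \qv(M)_{i+1} = \qv(M)(1)_i$, and functoriality of $\qv$ shows this is natural. For (2)(i) I would compute $\qv(A(-r))$: its degree-$0$ part is $A_{-r}\oplus \cdots \oplus A_{\ell-1-r}$, which is exactly the $r$-th row of $\nabla A$ (since $A_{<0}=0$), and its degree-$1$ part is $A_{\ell-r}\oplus\cdots\oplus A_{2\ell-1-r}$, i.e.\ the $r$-th row of $\Delta A$ (since $A_{>\ell}=0$); summing over $r=0,\ldots,\ell-1$ recovers $\nabla A\oplus\Delta A=A^{[\ell]}$ in degrees $0$ and $1$. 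Part (2)(ii) is completely analogous, using that $J_A$ has no degree-$0$ part on the off-diagonal blocks, and the Jacobson radical of $\nabla A\oplus\Delta A$ is described in the same matrix form.

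For (2)(iii), the work is to identify $\qv(U)$ and $\qv(V)$ as specific $A^{[\ell]}$-modules. Since $A(i)_{<0}$ is concentrated in degrees $-i,\ldots,-1$ and $A(i)_{\geq 0}$ in degrees $0,\ldots,\ell-i$, both $\qv(A(i)_{<0})$ and $\qv(A(i)_{\geq 0})$ are concentrated in a single $A^{[\ell]}$-degree ($-1$ and $0$ respectively). Then a careful re-indexing (essentially substituting $s=\ell-i$ or $s=i-1$) shows that the summands fill out precisely the rows of $\nabla A$ (shifted to degree $-1$) and of $\Delta A$ (in degree $0$). The only real obstacle here is keeping the double indices straight; I would present a small table or write out the ranges of $i$ and $r$ explicitly and match to the matrix description \eqref{Beilinson}.

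Once (1) and (2) are in place, the rest is formal. For (3), every indecomposable graded projective (resp.\ injective) $A$-module is of the form $A(n)$ (resp.\ a shift of the graded injective envelope of a simple), and since $\qv$ is an equivalence of module categories commuting with shifts by (1), combined with (2)(i) which says $\qv(\bigoplus_{r=0}^{\ell-1}A(-r))=A^{[\ell]}$, the image of $\grproj A$ is exactly $\grproj A^{[\ell]}$; for injectives I would apply the same argument to the minimal graded injective cogenerator, or pass to the opposite algebra. For (4), since $\qv:\GrMod A\to\GrMod A^{[\ell]}$ is an exact equivalence, graded injective dimensions are preserved on the right side via (3)(ii), and the left-sided statement follows by applying the same argument to $A^{\op}$ (using that $A^{[\ell],\op}\cong (A^{\op})^{[\ell]}$ up to graded Morita equivalence). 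Finally (4)(i): since $\qv$ is an equivalence and $\Ext^i_A(M,A)=0$ for all $i>0$ iff $\Ext^i_A(M,\bigoplus_{r=0}^{\ell-1}A(-r))=0$ (the latter being a direct sum of shifts of the former), (2)(i) implies that $M\in\grCM A$ iff $\qv(M)\in\grCM A^{[\ell]}$; and (4)(ii) is the passage to the stable category, using (3)(i) to identify the projective ideals.
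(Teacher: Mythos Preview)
Your approach is correct and is exactly the routine verification the paper has in mind; in fact the paper states this lemma without proof, treating all four parts as immediate consequences of Mori's definition of the quasi-Veronese algebra and the explicit matrix descriptions \eqref{Beilinson}. Your computations in (1)--(2) are accurate (in particular your reindexing in (2)(iii) correctly matches $\qv(A(i)_{<0})$ and $\qv(A(i)_{\geq 0})$ with the $(\ell-i)$-th rows of $\nabla A$ and $\Delta A$), and your formal deductions of (3) and (4) from the fact that $\qv$ is an exact equivalence together with (2)(i) are the intended ones. One small clarification for (2)(ii): rather than phrasing it in terms of ``off-diagonal blocks'', it is cleaner to use that $J_A = J_{A_0}\oplus A_{>0}$ for a finite dimensional non-negatively graded algebra, so $A/J_A$ is concentrated in degree $0$ and the computation reduces to the diagonal.
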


%

The following lemma follows from \cite[III Proposition 2.7]{ARS}.

\begin{lemma}\label{lemma-ARS}
We keep the notations as above. 
Then, $A_0$ has finite global dimension if and only if so does $\nabla A$. 
\end{lemma}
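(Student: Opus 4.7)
The plan is to prove the equivalence by induction on the length $\ell$, reducing to the classical result on global dimensions of triangular matrix Artin algebras invoked in \cite[III Proposition 2.7]{ARS}. The base case $\ell = 1$ is trivial, since then $\nabla A = A_0$.

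For the inductive step with $\ell \geq 2$, I would use the idempotent $e \in \nabla A$ corresponding to the top-left diagonal entry to produce the $2 \times 2$ upper-triangular decomposition
\[
\nabla A \cong \begin{pmatrix} A_0 & M \\ 0 & \nabla \tilde A \end{pmatrix},
\]
where $e(\nabla A)e \cong A_0$, the off-diagonal piece $M := e(\nabla A)(1-e) = (A_1, A_2, \ldots, A_{\ell-1})$ is a finite-dimensional $A_0$-$(\nabla \tilde A)$-bimodule, and the bottom-right corner $(1-e)(\nabla A)(1-e)$ is naturally identified with the Beilinson algebra $\nabla \tilde A$ of the truncated graded algebra $\tilde A := A / A_{\geq \ell - 1}$. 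Since $A_{\geq \ell - 1}$ is a graded two-sided ideal, $\tilde A$ is indeed a graded algebra of length $\ell - 2$ with $\tilde A_0 = A_0$. Applying \cite[III Proposition 2.7]{ARS} to the above triangular decomposition, I obtain that $\gldim \nabla A < \infty$ if and only if both $\gldim A_0 < \infty$ and $\gldim \nabla \tilde A < \infty$; the induction hypothesis then converts the latter into $\gldim \tilde A_0 = \gldim A_0 < \infty$, closing the induction.

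I do not anticipate a serious obstacle: the argument is essentially formal. The one point requiring care is the correct identification of the corner $(1-e)(\nabla A)(1-e)$ with the smaller Beilinson algebra $\nabla \tilde A$, together with verifying that the hypotheses of \cite[III Proposition 2.7]{ARS} are met — namely that the bimodule $M$ is finite-dimensional over $K$, so that finite global dimension of each diagonal block is both necessary and sufficient for finite global dimension of the triangular matrix algebra.
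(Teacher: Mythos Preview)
Your approach is correct and is precisely how one unpacks the paper's bare citation of \cite[III Proposition 2.7]{ARS}. One small indexing slip: under the paper's convention $\nabla A$ is $\ell\times\ell$, so the $(\ell-1)\times(\ell-1)$ corner $(1-e)(\nabla A)(1-e)$ has top row $A_0,\ldots,A_{\ell-2}$ and corresponds to $\nabla(A/A_\ell)$ rather than $\nabla(A/A_{\geq\ell-1})$ (the latter would be $(\ell-2)\times(\ell-2)$); this does not affect the induction, which only needs that the corner is again upper triangular with diagonal blocks $A_0$.
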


We give an important remark. 

\begin{remark}\label{reduction-qv}
Assume that $A$ is a graded $1$-IG-algebra.
Then, $A^{[\ell]}$ is also a graded $1$-IG-algebra.
Let $V=\bigoplus_{i\geq 1}A(i)_{\geq0}$ and $V'=A^{[\ell]}(1)_{\geq 0}=\Delta A$. 
It follows from the above two lemmas that:
\begin{itemize}
\item 
$\gldim A_0<\infty$ $\Leftrightarrow$ $\gldim (A^{[\ell]})_0<\infty$.
\item
$\qv:\stabgrCM A \cong \stabgrCM A^{[\ell]}$.
\item
$V$ is a tilting object in $\stabgrCM A$ if and only if $V'$ is a tilting object in $\stabgrCM A^{[\ell]}$.
\end{itemize}
Therefore, the study of Question \ref{question-syzygy-U-tilting} is reduced to the case that $A$ is a trivial extension algebra.
\end{remark}

\subsection{Asid bimodules}

The second and third authors studied the representation theory of trivial extension IG-algebras \cite{adasore,higehaji}.
In this subsection, we recall their result which asserts that the singularity categories of trivial extension IG-algebras can be realized as full subcategories of the derived categories.

Let $\Lambda$ be an algebra, $C$ a $\Lambda$-$\Lambda$-bimodule,
and $A:=\Lambda \oplus C$ the trivial extension algebra of $\Lambda$ by $C$.
In this paper, we always regard the trivial extension algebra $A$ as a graded algebra with $A_0=\Lambda$ and $A_1=C$.

If $A$ is a graded IG-algebra, then we call $C$ an \emph{asid bimodule} over $\Lambda$.
In the study of trivial extension IG-algebras, the following numbers are important.



%
%

\begin{definition}\label{def-asid-number}
If $C$ is an asid bimodule, we call a non-negative integer
\[
\alpha_r(C):=\max \{ i \in\ZZ \ | \ \exists n\in\ZZ_{\geq0} \mbox{ s.t. }  \soc(\Omega^{-n}_A(A))_{-i}\neq0 \}+1
\]
the \emph{right asid number} of $C$ (see \cite[Definition 5.11 and Corollary 5.12]{adasore}).
Here $\Omega^{-n}_A(A)$ is the $n$-th graded cosyzygy of $A_A$. 
Dually, we can define the left asid number $\alpha_{\ell}(C)$ of $C$.

We remark that these two numbers coincide  if $\Lambda$ is of finite global dimension \cite[Theorem 4.12.(1)]{higehaji}.
In this case, we denote them by $\alpha(C):=\alpha_r(C)=\alpha_{\ell}(C)$, and call the asid number of $C$. 
\end{definition}


%
Now we regard $\Lambda$ as a graded algebra concentrated in degree $0$.
Then, a canonical map $A \to \Lambda$ is a homomorphism of graded algebras. 
This induces a functor $\sfD^{\mrb}(\mod\Lambda)\to\sfD^{\mrb}(\grmod A)$.
Let $\varpi$ be the composite of this functor and the quotient functor as follows:
\[
\varpi: \sfD^{\mrb}(\mod\Lambda)\to\sfD^{\mrb}(\grmod A) \xrightarrow{\mbox{\scriptsize quot.}}\Sing^{\ZZ}A.
\]
This functor gives an equivalence from some full subcategory $\sfT$ of $\sfD^{\mrb}(\mod\Lambda)$ to $\Sing^{\ZZ}A$ if $A$ is IG.
To describe the category $\sfT$, we need one notation.  
For a natural number $a$, we denote the iterated derived tensor product of $C$ by
\[
C^{a}:=C\lotimes_{\Lambda}C\lotimes_{\Lambda}\cdots \lotimes_{\Lambda}C \ \ \ (\mbox{$a$-times}).
\]
Also we set $C^0:=\Lambda$.

\begin{theorem}[{\cite[Theorem 1.4]{higehaji}}]\label{higehaji-equiv}
Assume that $\Lambda$ is of finite global dimension, and $C$ an asid bimodule over $\Lambda$.
Let $\alpha=\alpha(C)$ and $\sfT=\thick C^{\alpha}$.
Then, the functor
\[
\varpi|_{\sfT}:\sfT \to \Sing^{\ZZ}A.
\] 
is an equivalence of triangulated categories. 
Therefore, we also have an equivalence
\[
\mathcal{H}:\sfT \xrightarrow{\varpi|_{\sfT}} \Sing^{\ZZ}A \xrightarrow{\beta^{-1}} \stabgrCM A
\]
of triangulated categories. 
\end{theorem}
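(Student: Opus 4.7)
The plan is to verify that $\varpi|_\sfT$ is both essentially surjective and fully faithful; the asid number $\alpha$ enters decisively in the latter.

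The structural skeleton is the short exact sequence of graded right $A$-modules
\[
0 \to \bar C(-1) \to A \to \Lambda \to 0,
\]
where $\bar C$ denotes $C$ placed in degree $0$ via the projection $A \to \Lambda$. Since $A \cong 0$ in $\Sing^\ZZ A$, this identifies $\Lambda \cong \bar C(-1)[1]$ in the singularity category. By constructing a graded bar-type resolution of $\Lambda$ over $A$ (further tensoring each $\bar C^{\otimes n}$ with a $\Lambda$-projective resolution to land in $\proj^{\ZZ} A$), one obtains in $\Sing^\ZZ A$ the identifications
\[
\varpi(C^n) \;\cong\; \Lambda(n)[-n] \qquad (n \ge 0),
\]
which bridge objects of $\sfT$ with graded shifts of $\Lambda$.

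For essential surjectivity, $\Sing^\ZZ A$ is thickly generated by the graded simples $S(n)$: since $\gldim \Lambda < \infty$, each such $S(n)$ is a direct summand of $(\Lambda/J_\Lambda)(n)$ and lies in the thick subcategory of $\sfD^{\mrb}(\grmod A)$ generated by $\Lambda(n)$. The bridge above realises each $\Lambda(n)$ with $n \ge \alpha$ as the image of an object of $\thick(C^\alpha) \subset \sfD^{\mrb}(\mod \Lambda)$; the asid condition is what guarantees the analogous statement for the remaining $n$, using cosyzygies of $A$ of order at most $\alpha$.

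For full faithfulness, I would recast both Hom spaces as Ext computations. On $\sfT$, morphisms $C^a \to C^b[i]$ are given by a bimodule Ext over $\Lambda$, which is finite since $\gldim \Lambda < \infty$. On $\Sing^\ZZ A$, morphisms between $\varpi(C^a)$ and $\varpi(C^b)[i]$ equal the stable graded Ext $\underline{\Ext}^i_A(\bar C^{\otimes a}, \bar C^{\otimes b})_0$. Using the bar-type graded projective resolution of $\Lambda$ (and the cosyzygy description of stable Ext), the asid bound ensures that the degrees in which $\soc\, \Omega_A^{-n}(A)$ lives are all $\ge -(\alpha-1)$; this is precisely the numerical input needed to show that for $a, b \ge \alpha$ the two Ext computations coincide.

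The main obstacle is this last matching step. A priori the stable $A$-Ext could pick up classes invisible to $\Lambda$-Ext, arising from the interplay between the graded and stable structures. The purpose of the asid number is to measure when these extra classes are exhausted; once $a \ge \alpha$, no such ghost classes remain and the identification goes through. Handling the degree bookkeeping cleanly is the technical heart of the theorem.
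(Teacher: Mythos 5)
The paper does not prove this statement: it is quoted verbatim from \cite[Theorem 1.4]{higehaji} as an external ingredient, so there is no in-paper proof to compare your argument against. With that caveat, I will assess your sketch on its own terms.

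Your structural backbone is sound. The short exact sequence $0\to \bar C(-1)\to A\to\Lambda\to 0$ does give $\Lambda\cong\bar C(-1)[1]$ in $\Sing^{\ZZ}A$, and iterating via the triangle $C^{\lotimes(n+1)}(-1)\to C^{\lotimes n}\lotimes_{\Lambda}A\to C^{\lotimes n}\to$ (where the middle term is perfect over $A$ because $\gldim\Lambda<\infty$) yields the key identification $\varpi(C^{n})\cong\Lambda(n)[-n]$. The inclusion $C^{n}\in\thick C^{\alpha}$ for $n\ge\alpha$ also holds, by applying the triangulated functor $C^{n}\lotimes_{\Lambda}-$ to $C\in\thick(\Lambda)$. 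These parts are correct and I believe they are the same basic geometry as in \cite{higehaji}.

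The genuine gap is that the two steps where the asid number actually does the work are only gestured at. For essential surjectivity, the identity $\Lambda(n)\cong\bar C(n-1)[1]$ together with $C\in\thick(\Lambda)$ gives $\thick(\Lambda(n))\subseteq\thick(\Lambda(n-1))$, so the thick closure of $\{\Lambda(n):n\ge\alpha\}$ is just $\thick(\Lambda(\alpha))$ and the nontrivial content is the \emph{reverse} inclusions $\Lambda(m)\in\thick(\Lambda(\alpha))$ for $m<\alpha$. Your phrase ``using cosyzygies of $A$ of order at most $\alpha$'' names the right ingredient but does not explain the mechanism by which the bound $\soc\,\Omega_A^{-n}(A)$ living in degrees $\ge -(\alpha-1)$ forces these lower twists into $\thick(\Lambda(\alpha))$; without this, essential surjectivity is not established. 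For full faithfulness, the claim that the two Ext computations ``coincide'' is likewise the whole theorem in miniature: a priori the stable Hom in $\Sing^{\ZZ}A$ could differ from the $\Lambda$-hyper-Ext in either direction, and the asid bound must be shown to rule this out in both; the asymmetry (what controls extra classes versus what controls collapsed ones) is not addressed. Also a small imprecision: $\Hom_{\sfD^{\mrb}(\mod\Lambda)}(C^a,C^b[i])$ is one-sided hyper-Ext of right $\Lambda$-modules, not a bimodule Ext. In sum, the skeleton and the numerical dictionary are right, but the two places where $\alpha$ enters decisively --- exactly where you predicted the technical heart would be --- remain unproved.
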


We summarize equivalences of Theorem \ref{Buchweitz-equiv} and Theorem \ref{higehaji-equiv} in the following figure. 
\[
\xymatrix{
\grCM A \ar[r]^-{\mathrm{inc.}} \ar[d]_-{\mathrm{nat.}} & \grmod A \ar[r]^-{\mathrm{inc.}} & \sfD^{\mrb}(\grmod A) \ar[d]^-{\mathrm{quot.}}  & \sfD^{\mrb}(\mod \Lambda) \ar[l]& \sfT=\thick C^{\alpha} \ar[l]_-{\ \mathrm{inc.}}  \ar@/^15pt/[lld]^-{\varpi|_{\sfT}}_{\cong} \\
\stabgrCM A \ar[rr]_{\beta}^{\cong} & & \Sing^{\ZZ} A 
}
\]

%
%

\subsection{The number $g(A)$.}

In this subsection, we introduce the number $g(M)$ for a graded module $M$ over a graded algebra, and study basic properties.
In particular, we give a relationship between $g(A)$ and asid numbers when $A$ is a trivial extension algebra. 

\begin{definition}\label{def-g(M)}
Let $A=\bigoplus_{i=0}^{\ell}A_i$ be a graded algebra.
For $M\in\grmod A$, we define an integer $g(M)$ by
\[
g(M):=\sup\{ i\in\ZZ \ | \  \exists n\in\ZZ_{\geq0} \mbox{ s.t. }  \Ext^n_{A}(A/J_A(i),M)_0\neq0 \}.
\]
We take a minimal graded injective resolution 
\[
0\to M\to I^0 \to I^1 \to \cdots \to I^n \to \cdots
\]
of $M$.
Then it is easy to observe that the following equalities hold.
\begin{eqnarray*}
g(M)
&=&\sup\{ i\in\ZZ \ | \ \exists n\in\ZZ_{\geq0} \mbox{ s.t. } \Hom_{A}(A/J_A(i),I^n)_0\neq0 \} \\
&=&\sup\{ i\in\ZZ \ | \ \exists n\in\ZZ_{\geq0} \mbox{ s.t. } \soc (I^n)_{-i} \neq0 \} \\ 
&=&\sup\{ i\in\ZZ \ | \ \exists n\in\ZZ_{\geq0} \mbox{ s.t. } \soc(\Omega_A^{-n}(M))_{-i} \neq0  \}. 
\end{eqnarray*}
Here $\Omega_A^{-n}(M)$ is the $n$-th graded cosyzygy of $M$.
\end{definition}


We give a relationship between $g(M)$ and $g(\qv(M))$.

\begin{lemma}\label{lem-general-asid}
Let $A=\bigoplus_{i=0}^{\ell}A_i$ be a graded algebra.
We consider the equivalence $\qv:\grmod A\to\grmod A^{[\ell]}$.
For $M,N\in\grmod A$, the following assertions hold.
\begin{enumerate}[\rm (1)]
\item
$g(M\oplus N)=\sup\{g(M),g(N)\}$.
\item 
$g(M)<\infty$ if and only if $g(\qv(M))<\infty$.
\item
Assume that $g(M)<\infty$.
Then, we have inequalities
\[
(g(\qv(M))-1) \ell<g(M)\leq g(\qv(M))\ell.
\]
\item
Assume that $A$ is a graded IG-algebra.
We put $B=A^{[\ell]}$.
Then, we have inequalities 
\[
( g(B)-1)\ell<g(A)\leq  g(B)\ell.
\]
\end{enumerate}
\end{lemma}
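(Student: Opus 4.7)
The plan is to work from the first characterization
\[
g(M)=\sup\{i\in\ZZ \mid \exists n\geq 0,\ \Ext^n_A(A/J_A(i),M)_0\neq 0\}
\]
of Definition~\ref{def-g(M)} and transport everything through the Morita-type equivalence $\qv:\grmod A\to\grmod B$ with $B=A^{[\ell]}$, using the intertwining relation $\qv\circ(\ell)=(1)\circ\qv$ supplied by Lemma~\ref{basics-quasi-V}(1). Throughout I will use the elementary shift identities $\Ext^n_A(X(i),Y(j))_0 = \Ext^n_A(X,Y)_{j-i}$ and $g(Y(k))=g(Y)+k$, both of which follow immediately from the definition.

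Assertion (1) is immediate: $\Ext^n_A(A/J_A(i),M\oplus N)_0$ splits as a direct sum, so the defining sup splits as a sup. For (2) and (3), the key step is to translate the full set of simples. By Lemma~\ref{basics-quasi-V}(2)(ii), for $0\leq r\leq \ell-1$ the modules $T_r:=\qv(A/J_A(-r))$ are the graded simple $B$-modules concentrated in degree $0$, with $B/J_B=\bigoplus_{r=0}^{\ell-1}T_r$. Given an arbitrary $i\in\ZZ$, perform Euclidean division $-i=q\ell+r$ with $0\leq r\leq \ell-1$; combining $A/J_A(i)=(A/J_A(-r))(-q\ell)$ with the intertwining identity gives $\qv(A/J_A(i))=T_r(-q)$. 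Since $\qv$ is an equivalence,
\[
\Ext^n_A(A/J_A(i),M)_0 \;\cong\; \Ext^n_B(T_r(-q),\qv(M))_0 \;\cong\; \Ext^n_B(T_r,\qv(M))_q.
\]
Taking suprema over the same index set $\{(n,q,r)\}$, the two invariants become two optimization problems:
\[
g(M)=\sup\{-q\ell-r\},\qquad g(\qv(M))=\sup\{-q\},
\]
ranging over triples $(n,q,r)$ with $\Ext^n_B(T_r,\qv(M))_q\neq 0$. Finiteness of one supremum is therefore equivalent to finiteness of the other, proving (2). In the finite case, let $q^\ast$ be the infimum of admissible $q$, so $g(\qv(M))=-q^\ast$; then the optimal $r^\ast\in[0,\ell-1]$ at this $q^\ast$ realizes $g(M)=-q^\ast\ell-r^\ast$, and $0\leq r^\ast\leq \ell-1$ translates precisely to
\[
(g(\qv(M))-1)\ell < g(M) \leq g(\qv(M))\ell,
\]
which is (3).

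For (4), using (1) together with $g(Y(k))=g(Y)+k$ I will compute
\[
g\Bigl(\bigoplus_{r=0}^{\ell-1}A(-r)\Bigr)=\sup_{0\leq r<\ell}(g(A)-r)=g(A),
\]
and by Lemma~\ref{basics-quasi-V}(2)(i), $\qv(\bigoplus_{r=0}^{\ell-1}A(-r))=B$, so the analogous quantity on the $B$-side equals $g(B)$. Applying (3) to $M=\bigoplus_{r=0}^{\ell-1}A(-r)$ then yields the desired inequality; the IG hypothesis enters only to guarantee finiteness of $g(A)$ and $g(B)$ (via bounded injective resolutions with finitely many indecomposable summands).

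The main obstacle is not any deep difficulty but rather the bookkeeping: choosing sign conventions for the degree shifts so that the Euclidean division $-i=q\ell+r$ converts the sup defining $g(M)$ cleanly into a sup over triples indexed by $B$-data, and verifying that $\qv\circ(\ell)=(1)\circ\qv$ interacts correctly with the standard identities $\Ext^n_A(X(i),Y)_k=\Ext^n_A(X,Y)_{k-i}$. Once this translation is set up, assertions (1)--(4) all fall out mechanically.
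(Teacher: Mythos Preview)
Your proof is correct and follows essentially the same route as the paper's: both arguments rest on the single key isomorphism
\[
\Ext^n_A\Bigl(\bigoplus_{r=0}^{\ell-1}A/J_A(\ell i - r),\,M\Bigr)_0\;\cong\;\Ext^n_{B}\bigl(B/J_B(i),\,\qv(M)\bigr)_0,
\]
obtained from Lemma~\ref{basics-quasi-V}(1),(2)(ii), and then read off (2), (3), (4) directly. Your version simply unpacks this isomorphism summand-by-summand via the Euclidean division $-i=q\ell+r$ and tracks the sign conventions explicitly, whereas the paper states the bundled isomorphism and concludes in one line that $g(M)=g(\qv(M))\ell-r$ for some $0\le r<\ell$; the content is identical. (One small wording point: your $T_r=\qv(A/J_A(-r))$ need not be simple if $A/J_A$ is not a field, only semisimple---but your argument only uses $B/J_B=\bigoplus_r T_r$, so nothing is affected.)
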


\begin{proof}
(1) It follows from the definition of $g$.

(2) (3)
By Lemma \ref{basics-quasi-V}, we have 
\[
\Ext^n_{A}(\bigoplus_{r=0}^{\ell-1} A/J_A(\ell i-r),M)_0
\cong
\Ext^n_{A^{[\ell]}}(A^{[\ell]}/J_{A^{[\ell]}}(i),\qv(M))_0.
\]
for  $i\in\ZZ$.
So, the assertion (2) follows.
Moreover if $g(M)<\infty$, then there is an integer $r \ (0\leq r <\ell)$ such that $g(M)=g(\qv(M))\ell-r$ by the above isomorphism.
Thus the assertion (3) follows.

(4)
Note that $g(\bigoplus_{r=0}^{\ell-1}A(-r))=g(A)$ holds by (1).
Therefore, by (3) and $\qv(\bigoplus_{r=0}^{\ell-1}A(-r))=A^{[\ell]}$, 
we have the desired inequalities.
\end{proof}

Let $g(A):=g(A_A)$ and $g(A^{\op}):=g({}_AA)$.
These numbers are related to asid numbers as in the following lemmas.

\begin{lemma}\label{asid=g}
Let $\Lambda$ be an algebra, and $C$ an asid bimodule over $\Lambda$.
We consider the trivial extension algebra $A=\Lambda\oplus C$.
Then, the following equality holds.
\begin{equation*}
\alpha_r(C)=g(A)+1.
\end{equation*}
\end{lemma}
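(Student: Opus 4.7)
The plan is to reduce the identity to a direct unwinding of the two definitions, using the third equivalent formulation of $g(M)$ provided in Definition~\ref{def-g(M)}. Applied to $M = A_A$, that formulation gives
\[
g(A) \;=\; \sup\bigl\{\, i \in \ZZ \;\big|\; \exists\, n \in \ZZ_{\geq 0} \text{ s.t. } \soc\bigl(\Omega_A^{-n}(A_A)\bigr)_{-i} \neq 0 \,\bigr\},
\]
and by Definition~\ref{def-asid-number},
\[
\alpha_r(C) \;=\; \max\bigl\{\, i \in \ZZ \;\big|\; \exists\, n \in \ZZ_{\geq 0} \text{ s.t. } \soc\bigl(\Omega_A^{-n}(A)\bigr)_{-i} \neq 0 \,\bigr\} + 1.
\]
The indexing set on the right-hand sides is literally the same, so the entire content of the lemma is to verify that the supremum on the first line is attained (and finite), so that it agrees with the maximum on the second line.

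I would establish this finiteness as follows. Because $A$ is a graded IG-algebra, the minimal graded injective resolution $0 \to A \to I^0 \to I^1 \to \cdots$ terminates at some finite stage $I^d$, so $\Omega_A^{-n}(A) = 0$ for $n$ sufficiently large. Moreover each $I^n$ is a finitely generated graded module over a finite-dimensional graded algebra, hence its socle is supported in finitely many degrees. Thus the set whose supremum defines $g(A)$ is a finite set of integers, so it has a maximum, which coincides with $g(A)$. The set is moreover nonempty: taking $n=0$, we have $\Omega^0_A(A) = A$, and since $A$ is a nonzero finite-dimensional graded algebra, $\soc(A)$ is nonzero.

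Combining these observations, the two sets coincide as finite nonempty subsets of $\ZZ$, and comparing yields $\alpha_r(C) = g(A) + 1$. There is no substantial obstacle here; the only matter of care is that the existence of the maximum in the definition of $\alpha_r(C)$ implicitly requires the finiteness argument above, which is precisely where the IG hypothesis enters.
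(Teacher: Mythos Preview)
Your proof is correct and follows the same approach as the paper, which simply states that the assertion follows from Definition~\ref{def-asid-number} and the equalities in Definition~\ref{def-g(M)}. You are more explicit than the paper in addressing why the supremum in the definition of $g(A)$ is actually a maximum (using the IG hypothesis and finite-dimensionality), which is a detail the paper leaves implicit.
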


\begin{proof}
The assertion follows from Definition \ref{def-asid-number} and the equalities mentioned in Definition \ref{def-g(M)}.
\end{proof}

\begin{lemma}\label{qv-g-asid}
Let $A=\bigoplus_{i=0}^{\ell}A_i$ be a graded IG-algebra.
We consider the Beilinson algebra $\nabla A$ and its bimodule $\Delta A$ as in \eqref{Beilinson}.
\begin{enumerate}[\rm(1)]
\item 
$\Delta A$ is an asid bimodule over $\nabla A$.
\item
We set $\alpha_r:=\alpha_r(\Delta A)$.
Then, we have inequalities 
\[
(\alpha_r-2)\ell  <  g(A)\leq (\alpha_r-1)\ell.
\]
In particular, 
\[
g(A)\leq0
\ \Leftrightarrow \
g(A^{[\ell]})\leq0  
\ \Leftrightarrow \
\alpha_r \leq 1.
\]
\end{enumerate}
\end{lemma}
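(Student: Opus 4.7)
The plan is to deduce the statement directly by assembling the three preceding ingredients: Lemma \ref{basics-quasi-V}(4), which controls the IG-property under $\qv$; Lemma \ref{asid=g}, which identifies $\alpha_r$ with a $g$-invariant of the trivial extension; and Lemma \ref{lem-general-asid}(4), which relates $g(A)$ to $g(A^{[\ell]})$.

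For part (1), I would simply recall the identification $A^{[\ell]} = \nabla A \oplus \Delta A$ viewed as a trivial extension (the grading on $A^{[\ell]}$ places $\nabla A$ in degree $0$ and $\Delta A$ in degree $1$). Since $A$ is graded IG, Lemma \ref{basics-quasi-V}(4) implies $A^{[\ell]}$ is graded IG, so by the definition of asid bimodule given just before Definition \ref{def-asid-number}, the bimodule $\Delta A$ is asid over $\nabla A$.

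For part (2), I would first apply Lemma \ref{asid=g} to the trivial extension $\nabla A \oplus \Delta A = A^{[\ell]}$, obtaining
\[
\alpha_r(\Delta A) = g(A^{[\ell]}) + 1.
\]
Next, I would invoke Lemma \ref{lem-general-asid}(4) with $B = A^{[\ell]}$:
\[
(g(A^{[\ell]}) - 1)\ell < g(A) \leq g(A^{[\ell]}) \ell.
\]
Substituting $g(A^{[\ell]}) = \alpha_r - 1$ into this chain yields
\[
(\alpha_r - 2)\ell < g(A) \leq (\alpha_r - 1)\ell,
\]
which is the displayed inequality.

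For the three equivalences at the end of (2): the equivalence $g(A^{[\ell]}) \leq 0 \Leftrightarrow \alpha_r \leq 1$ is immediate from $\alpha_r = g(A^{[\ell]}) + 1$. For $g(A) \leq 0 \Leftrightarrow g(A^{[\ell]}) \leq 0$, the forward direction follows because $g(A^{[\ell]}) \geq 1$ would force $g(A) > (1-1)\ell = 0$ by the lower bound, and the backward direction follows because $g(A^{[\ell]}) \leq 0$ forces $g(A) \leq g(A^{[\ell]})\ell \leq 0$ by the upper bound. Since this proof is essentially a chain of substitutions into previously established lemmas, I do not anticipate any genuine obstacle; the only minor care required is bookkeeping the strict versus non-strict inequalities in Lemma \ref{lem-general-asid}(4) when passing to the equivalence $g(A) \leq 0 \Leftrightarrow g(A^{[\ell]}) \leq 0$.
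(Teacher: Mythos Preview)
Your proposal is correct and follows exactly the same route as the paper: part (1) via Lemma \ref{basics-quasi-V}(4), and part (2) by combining Lemma \ref{asid=g} with Lemma \ref{lem-general-asid}(4). The paper's own proof is in fact just a one-line reference to these two lemmas, so your write-up is simply a more explicit version of the same argument.
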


\begin{proof}
(1)
As in Lemma \ref{basics-quasi-V} (4), $A^{[\ell]}$ is a graded IG-algebra, 
and so $\Delta A$ is an asid bimodule over $\nabla A$.

(2)
The assertion follows from Lemma \ref{lem-general-asid} and Lemma \ref{asid=g}.
\end{proof}

By the above lemma and the left right symmetry of asid numbers, we have the following corollary.

\begin{corollary}\label{symmetry g(A)<=0}
Let $A=\bigoplus_{i=0}^{\ell}A_i$ be a graded IG-algebra.
Assume that $A_0$ has finite global dimension.
Then, $g(A)\leq 0$ if and only if $g(A^{\op})\leq 0$.
\end{corollary}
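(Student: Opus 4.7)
The plan is to reduce the statement to the left-right symmetry of asid numbers, which Definition~\ref{def-asid-number} records as valid precisely when $\Lambda$ has finite global dimension, and then bridge between $g(A)$, $g(A^{\op})$ and the right/left asid numbers of $\Delta A$ via Lemma~\ref{qv-g-asid}.

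First I would rewrite the condition $g(A)\leq 0$ using Lemma~\ref{qv-g-asid}(2), obtaining
\[
g(A)\leq 0 \iff \alpha_{r}(\Delta A)\leq 1.
\]
Next I would apply the same lemma to the graded algebra $A^{\op}$. The key book-keeping point is that the Beilinson construction intertwines the opposite: from the definitions in \eqref{Beilinson} (transposition of the matrix pattern), one has a canonical isomorphism $\nabla(A^{\op})\cong (\nabla A)^{\op}$ of algebras, under which $\Delta(A^{\op})$ is identified with $\Delta A$ viewed as a $(\nabla A)^{\op}$-$(\nabla A)^{\op}$-bimodule, i.e.\ with left and right $\nabla A$-actions swapped. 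Consequently $\alpha_{r}(\Delta(A^{\op})) = \alpha_{\ell}(\Delta A)$, and Lemma~\ref{qv-g-asid}(2) applied to $A^{\op}$ gives
\[
g(A^{\op})\leq 0 \iff \alpha_{\ell}(\Delta A)\leq 1.
\]

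Finally I would invoke the hypothesis $\gldim A_{0}<\infty$: by Lemma~\ref{lemma-ARS} this implies $\gldim \nabla A<\infty$, and then the remark following Definition~\ref{def-asid-number} (citing \cite[Theorem~4.12(1)]{higehaji}) yields $\alpha_{r}(\Delta A)=\alpha_{\ell}(\Delta A)$. Combining the three steps produces the desired equivalence $g(A)\leq 0 \Longleftrightarrow g(A^{\op})\leq 0$.

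The one point requiring some care is the identification $\nabla(A^{\op})\cong(\nabla A)^{\op}$ and the induced identification of $\Delta$-bimodules; once this is written out via matrix transposition, the rest is a direct assembly of the cited results. This transposition check is the only step where genuine verification is needed; the remaining logic is purely a substitution into Lemma~\ref{qv-g-asid} and Lemma~\ref{lemma-ARS}.
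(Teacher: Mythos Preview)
Your proposal is correct and follows essentially the same approach as the paper: reduce to the asid numbers of $\Delta A$ via Lemma~\ref{qv-g-asid}(2), use Lemma~\ref{lemma-ARS} and \cite[Theorem~4.12(1)]{higehaji} to get $\alpha_r(\Delta A)=\alpha_\ell(\Delta A)$, and conclude. The paper compresses your transposition step into the phrase ``Lemma~\ref{qv-g-asid}(2) and its opposite version,'' but your explicit identification $\nabla(A^{\op})\cong(\nabla A)^{\op}$ is exactly what underlies that phrase.
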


\begin{proof}
We consider the Beilinson algebra $\nabla A$ of $A$. 
The bimodule $\Delta A$ is an asid bimodule over $\nabla A$ by Lemma \ref{qv-g-asid} (1).
We denote by $\alpha_r$ (resp. $\alpha_{\ell}$) the right (resp. left) asid number of $\Delta A$. 

By Lemma \ref{lemma-ARS}, $\nabla A$ has finite global dimension.
So we have $\alpha_r=\alpha_{\ell}$ by \cite[Theorem 4.12.(1)]{higehaji}.
Thus, by  Lemma \ref{qv-g-asid} (2) and its opposite version, it holds that
\[
g(A)\leq0
\ \Leftrightarrow \
\alpha_r=\alpha_{\ell} \leq 1.
\ \Leftrightarrow \
g(A^{\op})\leq0.
\qedhere
\]
\end{proof}

An upper bound of $g(A)$ is given by the injective dimension of $A_A$.

\begin{lemma}\label{g<id}
Let $A=\bigoplus_{i=0}^{\ell}A_i$ be a graded $d$-IG-algebra.
Then we have the inequality 
\[
-\ell \leq g(A)\leq d  \ell.
\]
\end{lemma}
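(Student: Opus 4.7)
The plan is to work directly with the minimal graded injective resolution $0 \to A \to I^0 \to I^1 \to \cdots \to I^d \to 0$ (finite of length $\leq d$ since $A$ is graded $d$-IG) and use the reformulation from Definition~\ref{def-g(M)} that
\[
g(A) = \sup\{i \in \ZZ : \soc(I^n)_{-i} \neq 0 \text{ for some } n \geq 0\}.
\]
Thus the lower bound $g(A) \geq -\ell$ asks for some $\soc(I^n)_\ell \neq 0$, while the upper bound $g(A) \leq d\ell$ asks that $\soc(I^n)_j = 0$ whenever $j < -d\ell$ and $0 \leq n \leq d$.

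For the lower bound, I would observe that $A_\ell \cap \soc(A) \neq 0$: since $A_\ell \cdot A_{\geq 1} \subseteq A_{> \ell} = 0$, the right $J_A$-action on $A_\ell$ collapses to the right $J_{A_0}$-action, and the nonzero finite-dimensional right $A_0$-module $A_\ell$ has nontrivial $A_0$-socle. Because $A \hookrightarrow I^0$ is essential, this yields $\soc(I^0)_\ell = \soc(A)_\ell \neq 0$, forcing $g(A) \geq -\ell$.

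For the upper bound, the key structural input is that every indecomposable graded injective right $A$-module has the form $D(Ae_j)(-m)$ for some primitive idempotent $e_j$ and some $m \in \ZZ$; since $Ae_j$ is a summand of ${}_AA$ it is concentrated in degrees $[0, s_j]$ with $s_j \leq \ell$, so $D(Ae_j)(-m)$ has socle concentrated in degree $m$ and is itself concentrated in degrees $[m-s_j, m] \subseteq [m-\ell, m]$. I would then prove by induction on $n \geq 0$ that $\soc(I^n)$ is concentrated in degrees $[-n\ell, \ell]$ and $I^n$ itself is concentrated in degrees $[-(n+1)\ell, \ell]$. The base $n=0$ follows from $\soc(I^0) = \soc(A)$, which lies in degrees $[0,\ell]$. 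For the inductive step, minimality ensures that $\soc(I^n)$ embeds into the cokernel of $I^{n-2} \to I^{n-1}$ (with $I^{-1}:=A$), hence sits in the degree range of $I^{n-1}$; the structural bound on indecomposable injectives then extends $I^n$ at most $\ell$ degrees below its socle. Specialising to $0 \leq n \leq d$ forces $\soc(I^n)_j = 0$ for $j < -d\ell$, giving $g(A) \leq d\ell$.

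The main obstacle, insofar as there is one, is the bookkeeping in the inductive step of the upper bound: one must combine the fact that socles of successive terms in a minimal injective resolution are controlled by the degrees of the previous term with the observation that graded injective envelopes of modules in a given degree extend at most $\ell$ degrees below that degree, so that each homological step can shift the admissible degree range down by at most $\ell$, accumulating to the bound $d\ell$ after $d$ steps.
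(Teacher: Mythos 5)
Your argument is correct and follows essentially the same route as the paper's: take the minimal graded injective resolution and propagate degree bounds by induction, using that each indecomposable graded injective is concentrated in an interval of width at most $\ell$ above its socle. The paper compresses your socle bookkeeping into the single claim $\image f^m \in \mod^{[-m\ell,\ell]}A$ proved inductively from $A\in\mod^{[0,\ell]}A$ and $\tuD(A)\in\mod^{[-\ell,0]}A$, and for the lower bound it only needs that the nonzero $\soc(A)$ sits in degrees $[0,\ell]$, so your sharper observation that $\soc(A)_\ell\neq 0$ is a (correct) refinement beyond what is required.
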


\begin{proof}
Let 
\[
0 \to A \xrightarrow{f^0} I^{0} \xrightarrow{f^1} I^{1}  \xrightarrow{f^2} \cdots \xrightarrow{f^{d}}  I^{d}\to 0
\]
be a minimal injective resolution of $A_A$. 
Then, $\image f^m$ belongs to $\mod^{[-m\ell,\ell]}A$ for $0\leq m \leq d$.
Indeed, since  $A \in \mod^{[0,\ell]} A$ and $\tuD(A) \in \mod^{[-\ell, 0]} A$,
one can show the claim by induction on $m$.
By this fact, $\soc(I^m)$ belongs to $\mod^{[-m\ell,\ell]}A$ for $0\leq m \leq d$.
Thus we have $-\ell\leq g(A)\leq d\ell$.
\end{proof}

The following corollary follows from Lemma \ref{asid=g} and Lemma \ref{g<id}.

\begin{corollary}\label{bound of asid number}
Let $\Lambda$ be an algebra, $C$ an asid bimodule over $\Lambda$, 
and $A=\Lambda\oplus C$ the trivial extension algebra. 
We denote by $d$ the graded injective dimension of $A_A$.
Then, we have the inequalities
\[
0\leq \alpha_{r}(C) \leq d+1.
\]
\end{corollary}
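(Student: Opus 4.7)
The plan is to derive the bounds on $\alpha_r(C)$ by directly combining the two preceding lemmas, using the fact that the trivial extension $A = \Lambda \oplus C$ is concentrated in degrees $0$ and $1$.

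First I would observe that, by the convention adopted in the paper, the trivial extension algebra $A = \Lambda \oplus C$ is a graded algebra with $A_0 = \Lambda$ and $A_1 = C$, so in the notation $A = \bigoplus_{i=0}^\ell A_i$ we have $\ell = 1$. Since $C$ is assumed to be an asid bimodule, $A$ is a graded IG-algebra, and by hypothesis the graded injective dimension of $A_A$ equals $d$, i.e.\ $A$ is a graded $d$-IG-algebra.

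Next I would apply Lemma \ref{g<id} with $\ell = 1$, which immediately yields
\[
-1 \leq g(A) \leq d.
\]
Then I would invoke Lemma \ref{asid=g}, which gives the identity
\[
\alpha_r(C) = g(A) + 1.
\]
Substituting this into the previous inequality yields
\[
0 \leq \alpha_r(C) \leq d+1,
\]
as required. There is no genuine obstacle here; the only point to check is the indexing convention that forces $\ell = 1$ for a trivial extension, which is fixed in the Conventions subsection.
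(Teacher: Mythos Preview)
Your proposal is correct and follows exactly the same approach as the paper, which simply states that the corollary follows from Lemma~\ref{asid=g} and Lemma~\ref{g<id}. You have merely made explicit the substitution $\ell = 1$ for the trivial extension $A = \Lambda \oplus C$ and combined the two lemmas, which is precisely what the paper intends.
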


\section{Proofs of Theorems \ref{Q(c)<->(d)}, \ref{silting theorem} and \ref{tilting theorem 1}}

\subsection{Proof of Theorem \ref{Q(c)<->(d)}}

In this subsection, we give a proof of Theorem \ref{Q(c)<->(d)}.
To prove it, we use the dimension $\tridim \mathscr{T}$ of a triangulated category $\mathscr{T}$ introduced by Rouquier. For the definition, readers should refer to \cite{Rouquier}.

\begin{proof}[Proof of Theorem \ref{Q(c)<->(d)}]
We assume that $\stabgrCM A$ has a tilting object, whose endomorphism algebra is denoted by $\Gamma$.
First we claim $\tridim \stabgrCM A<\infty$. 
We have $\gldim A_0<\infty$ by \cite[Lemma 3.0.1]{Lu-Zhu} (see also \cite[Proposition 6.10]{cotilting type}), and so $\gldim \nabla A<\infty$ by Lemma \ref{lemma-ARS}. 
Therefore, we have $\tridim \sfD^{\mrb}(\mod \nabla A) < \infty$ by \cite[Proposition 7.25]{Rouquier}.  
By Lemma \ref{basics-quasi-V} (4) and Theorem \ref{higehaji-equiv}, 
there is a dense functor $\sfD^{\mrb}(\mod \nabla A)\to\stabgrCM A$ of triangulated categories.
Thus, it follows from \cite[Lemma 3.4]{Rouquier} that $\tridim \stabgrCM A \leq \tridim \sfD^{\mrb}(\mod \nabla A) < \infty$. 
The claim holds.

By our assumption and Theorem \ref{Keller-tilting}, 
there is an equivalence $\stabgrCM A \cong \sfK^{\mrb}(\proj \Gamma)$ of triangulated categories.
So we obtain the inequality $\tridim \sfK^{\mrb}(\proj \Gamma)=\tridim \stabgrCM A < \infty$ by the above claim. 
Thus, we conclude $\gldim \Gamma < \infty$  by \cite[Proposition 7.25]{Rouquier}.  
\end{proof}

%
%

\subsection{Proof of Theorem \ref{silting theorem}}

In this subsection, we give a proof of Theorem \ref{silting theorem} and a remark about it.



\begin{proof}[Proof of Theorem \ref{silting theorem}]
(1)
We show that $\stabHom_{A}(V,V[i])_0\cong\Ext_{A}^{i}(V,V)_0=0$ for all $i>0$.
Let $Q:=\bigoplus_{i=1}^{\ell}A(i)$.
We claim that $\Hom_A(M,V)_0\cong\Hom_A(M,Q)_0$ holds for any $M\in\mod^{[0,\infty)}A$.
Indeed, there is an exact sequence
\begin{eqnarray}
0\to V \to Q \to U\to0 \label{exact-T-U}
\end{eqnarray}
For any $M\in\mod^{[0,\infty)}A$, we have $\Hom_A(M,U)_0=0$ since $U\in\mod^{(-\infty,0)}A$.
So applying $\Hom_A(M,-)_0$ to the above exact sequence, we have the desired isomorphism.

Now we take a minimal projective resolution $\cdots\to P^n \to \cdots \to P^1 \to P^0 \to V \to0$ of $V$ in $\grmod A$.
Note that each term belongs to $\mod^{[0,\infty)}A$.
So by the claim, there is the following isomorphism of complexes.
\[
\xymatrix@=15pt{
\Hom_A(P^0,V)_0 \ar[d]_{\cong} \ar[r] & \Hom_A(P^1,V)_0 \ar[d]_{\cong} \ar[r] & \cdots  \ar[r] &  \Hom_A(P^n,V)_0 \ar[d]_{\cong} \ar[r] & \cdots \\
\Hom_A(P^0,Q)_0 \ar[r] & \Hom_A(P^1,Q)_0 \ar[r] & \cdots \ar[r]  &  \Hom_A(P^n,Q)_0 \ar[r]  &  \cdots
}
\]
By this isomorphism and since $V$ is Cohen-Macaulay, $\Ext_{A}^{i}(V,V)_0=\Ext_{A}^{i}(V,Q)_0=0$ holds.

(2)
By (1), the remaining part is to show $\stabgrCM A=\thick V$.
Since the exact sequence \eqref{exact-T-U} gives rise to an triangle $V\to Q\to U \to V[1]$ in $\sfD^{\mrb}(\grmod A)$, we have $\beta(V)\cong U[-1]$ in $\Sing^{\ZZ}A$ (see Theorem \ref{Buchweitz-equiv}).
Therefore, it is enough to show that $\Sing^{\ZZ}A=\thick U$.

Let $\mathcal{U}$ be the full subcategory of $\grmod A$ satisfying the following conditions.
\begin{itemize}
\item
$\mathcal{U}$ contains $U$, graded projective $A$-modules and graded injective $A$-modules.
\item 
$\mathcal{U}$ is closed under taking direct sums, direct summands and extensions.
\item
$\mathcal{U}$ is closed under taking kernels of epimorphisms and cokernels of monomorphisms.
\end{itemize}
As in the proof of \cite[Lemma 3.5]{Yamaura}, 
the equality $\mathcal{U}=\grmod A$ holds.
Thus, we have $\Sing^{\ZZ}A=\thick\mathcal{U}\subset \thick U$, and so $\Sing^{\ZZ}A=\thick U$.
\end{proof}

By Theorem \ref{silting theorem}, it is natural to ask whether or not the $d$-th syzygy $\Omega^{d}_A(U)$ of $U$ is a presilting object in $\stabgrCM A$ for a graded $d$-IG-algebra $A$ with $d\geq2$.
The following example tells us that it is not a presilting object in general.

\begin{example}\label{counter-silting}
Let $A$ be an algebra defined by a quiver
\[
\xymatrix{
1 \ar@/^5pt/[r]^{\beta} \ar@(ul,dl)_{\alpha} & 2 \ar@/^5pt/[l]^{\gamma}
}
\]
with relations $\alpha^2=\beta\gamma$ and $\alpha\beta=\gamma\beta=\gamma\alpha=0$.
We introduce a structure of graded algebra of $A$ by $\deg\alpha=1$, $\deg\beta=0$, $\deg\gamma=2$.
Then, $A$ is a graded 2-IG-algebra.

Let $e_i$ be the idempotent of $A$ corresponding to the vertex $i$.
The Loewy series of indecomposable projective $A$-modules $e_1A,e_2A$ and indecomposable injective $A$-modules $\tuD(Ae_1),\tuD(Ae_2)$ are as follows.
\[
	\begin{xy} 
	(-15,7)*{e_1A=},
	(0,0)="O",
	"O"+<0cm,1.2cm>="11"*{{\bf 1}},
	"11"+<-0.3cm,-0.5cm>="21"*{1},
	"21"+<-0.3cm,-0.5cm>="31"*{},
	"21"+/r0.6cm/="22"*{{\bf2}},
	"31"+/r0.6cm/="32"*{1},
	"32"+/r0.6cm/="33"*{},
	\ar@{.}"11"+/dl/;"21"+/u/<2pt>
	\ar@{-}"11"+/dr/;"22"+/u/<-2pt>
	\ar@{.}"21"+/dr/;"32"+/u/<-2pt>
	\ar@{:}"22"+/dl/;"32"+/u/<2pt>
	\end{xy},
	\hspace{10mm}
	\begin{xy} 
	(-15,7)*{e_2A=},
	(0,0)="O",
	"O"+<0cm,1.2cm>="11"*{{\bf 2}},
	"11"+<-0.3cm,-0.5cm>="21"*{},
	"21"+<-0.3cm,-0.5cm>="31"*{},
	"21"+/r0.6cm/="22"*{},
	"31"+/r0.6cm/="32"*{1},
	"32"+/r0.6cm/="33"*{},
	\ar@{:}"11"+/d/;"32"+/u/
	\end{xy},
	\hspace{10mm}
	\begin{xy} 
	(-15,7)*{\tuD(Ae_1)=},
	(0,0)="O",
	"O"+<0cm,1.2cm>="11"*{1},
	"11"+<-0.3cm,-0.5cm>="21"*{1},
	"21"+<-0.3cm,-0.5cm>="31"*{},
	"21"+/r0.6cm/="22"*{2},
	"31"+/r0.6cm/="32"*{{\bf 1}},
	"32"+/r0.6cm/="33"*{},
	\ar@{.}"11"+/dl/;"21"+/u/<2pt>
	\ar@{-}"11"+/dr/;"22"+/u/<-2pt>
	\ar@{.}"21"+/dr/;"32"+/u/<-2pt>
	\ar@{:}"22"+/dl/;"32"+/u/<2pt>
	\end{xy},
	\hspace{10mm}
	\begin{xy} 
	(-15,7)*{\tuD(Ae_2)=},
	(0,0)="O",
	"O"+<0cm,1.2cm>="11"*{{\bf 1}},
	"11"+<-0.3cm,-0.5cm>="21"*{},
	"21"+<-0.3cm,-0.5cm>="31"*{},
	"21"+/r0.6cm/="22"*{{\bf 2}},
	"31"+/r0.6cm/="32"*{},
	"32"+/r0.6cm/="33"*{},
	\ar@{-}"11"+/dr/;"22"+/u/<-2pt>
	\end{xy}.
\]
The numbers written in boldface are in degree $0$,
and numbers connected by solid lines (resp. dotted lines, double dotted lines) are simple modules connected by arrows in degree $0$ (resp. degree $1$, degree $2$).

In this example, $\Omega_A^2(U)$ is not a presilting object in $\stabgrCM A$.
Indeed,  $\Omega_A^2(U)$ is given by $V=e_2A \oplus X(-1) \oplus X^{\oplus 2}$ where $X$ is as follows.
\[
	\begin{xy} 
	(-15,7)*{X=},
	(0,0)="O",
	"O"+<0cm,1.2cm>="11"*{},
	"11"+<-0.3cm,-0.5cm>="21"*{1},
	"21"+<-0.3cm,-0.5cm>="31"*{},
	"21"+/r0.6cm/="22"*{{\bf 2}},
	"31"+/r0.6cm/="32"*{1},
	"32"+/r0.6cm/="33"*{},
	\ar@{.}"21"+/dr/;"32"+/u/<-2pt>
	\ar@{:}"22"+/dl/;"32"+/u/<2pt>
	\end{xy}
\]
By $X[1]=X(1)$, we have 
\[
\stabHom_A(\Omega_A^2(U),\Omega_A^2(U)[1])_0=\stabHom_A(X(-1)\oplus X^{\oplus2},X\oplus X^{\oplus2}(1))_0\neq0.
\]
Thus, $\Omega_A^2(U)$ is not a presilting object in $\stabgrCM A$.

\end{example}

\subsection{Proof of Theorem \ref{tilting theorem 1}}

We give a proof of Theorem \ref{tilting theorem 1}.
First we prove it for the case of trivial extension algebras.

\begin{lemma}\label{tilting theorem 1 trivial extension}
Let $\Lambda$ be an algebra of finite global dimension, and $C$ an asid bimodule over $\Lambda$.
Assume that the trivial extension algebra $A=\Lambda\oplus C$ is a graded $1$-IG-algebra.
If  $\alpha(C)\leq 1$ is satisfied, then $C$ is a tilting object in $\stabgrCM A$.
\end{lemma}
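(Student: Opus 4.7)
The plan is to combine Theorem~\ref{silting theorem}(2), which already gives silting for free, with the derived equivalence $\mathcal{H}$ of Theorem~\ref{higehaji-equiv}, so as to transport the one outstanding vanishing into $\sfD^{\mrb}(\mod\Lambda)$, where it becomes tautological.

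The first step is automatic: since $\gldim \Lambda < \infty$, Theorem~\ref{silting theorem}(2) already tells us that $V := \bigoplus_{i \geq 1} A(i)_{\geq 0}$ is a silting object in $\stabgrCM A$. Hence what remains is to verify $\underline{\Hom}_A(V, V[i])_0 = 0$ for every $i < 0$; once this is established, silting gets upgraded to tilting.

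To pass through $\mathcal{H}$ I need two ingredients: an identification of $V$ with $\varpi$ applied to something concrete, and membership of that something in $\sfT := \thick C^{\alpha(C)}$. Both are direct. For the first, in the trivial-extension setting we have $\ell = 1$, and unwinding the definition shows that $V = A(1)_{\geq 0}$ is, as a graded $A$-module, just $A_1 = C$ placed in degree $0$; moreover, because $C \cdot C = 0$ in $A$, the $C$-action on $V$ vanishes, so $V$ is literally $\varpi(C)$. For the second, the hypothesis $\alpha(C) \leq 1$ places $C$ inside $\sfT$ in both possible cases: tautologically when $\alpha(C) = 1$, and because $\sfT = \thick \Lambda = \sfD^{\mrb}(\mod \Lambda)$ when $\alpha(C) = 0$ (using $\gldim \Lambda < \infty$). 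Theorem~\ref{higehaji-equiv} then supplies the triangulated equivalence $\mathcal{H} = \beta^{-1} \circ \varpi|_{\sfT} \colon \sfT \xrightarrow{\cong} \stabgrCM A$, and the identification above gives $\mathcal{H}(C) = V$.

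Transporting the desired vanishing through $\mathcal{H}$ then finishes the argument: for any $i < 0$,
\[
\underline{\Hom}_A(V, V[i])_0 \;\cong\; \Hom_{\sfT}(C, C[i]) \;=\; \Ext^i_{\Lambda}(C, C) \;=\; 0,
\]
the last equality being automatic since negative $\Ext$-groups between $\Lambda$-modules vanish. Together with the silting property already in hand, this promotes $V$ to a tilting object in $\stabgrCM A$. The only step requiring any care is the explicit identification $\mathcal{H}(C) = V$; I do not anticipate any serious obstacle, since everything else is a formal consequence of the two quoted theorems and of $\Ext^{<0}_{\Lambda} = 0$.
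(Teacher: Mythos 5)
Your proof is correct and follows essentially the same route as the paper: invoke Theorem~\ref{silting theorem}(2) to get the silting property, then use the equivalence $\mathcal{H}$ from Theorem~\ref{higehaji-equiv} to identify $V$ with the image of $C$ (viewed as a $\Lambda$-module in $\sfT$) and transport the negative-degree vanishing to the tautological fact that $\Hom_{\sfD^{\mrb}(\mod\Lambda)}(C,C[i])=0$ for $i<0$. The one cosmetic quibble is the notation $\Ext^i_\Lambda(C,C)$ for $i<0$, which is better written directly as a Hom-group in the derived category as the paper does, but your intended meaning is clear and the argument is sound.
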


\begin{proof}
To avoid confusion, if we regard $C$ as a $\Lambda$-module, we write it by $C_{\Lambda}$.
On the other hand, if we regard $C$ as a graded $A$-module concentrated in degree $0$, we write it by $C_A$.

By Theorem \ref{silting theorem}, $C_A$ is a silting object in $\stabgrCM A$.
So it is enough to show that $\stabHom_{A}(C_A,C_A[i])_0=0$ for all $i<0$.
By Theorem \ref{higehaji-equiv}, we have equivalences
\[
\xymatrix{
\sfT \ar[r]^-{\cong}_-{\varpi|_{\sfT}} & \Sing^{\ZZ}A & \stabgrCM A. \ar[l]_-{\cong}^-{\beta}
}
\]
Note that $\sfT=\sfD^{\mrb}(\mod\Lambda)$ if $\alpha(C)=0$, and $\sfT=\thick C_{\Lambda}$ if $\alpha(C)=1$.
Therefore, $\sfT$ contains $C_{\Lambda}$ in both cases. 
Moreover, since $\varpi(C_{\Lambda})=C_A=\beta(C_A)$ hold in $\Sing^{\ZZ}A$,  we have $\mathcal{H}(C_{\Lambda})\cong C_A$ in $\stabgrCM A$.
Thus, we have
\[
\stabHom_{A}(C_{A},C_{A}[i])_0 \cong \Hom_{\sfD^{\mrb}(\mod \Lambda)}(C_{\Lambda}, C_{\Lambda}[i]) = 0
\]
for all $i<0$.
\end{proof}

By Lemma \ref{tilting theorem 1 trivial extension} and the quasi-Veronese algebra construction, we can prove Theorem \ref{tilting theorem 1}.

\begin{proof}[Proof of Theorem \ref{tilting theorem 1}]
We take the $\ell$-th quasi-Veronese algebra $A^{[\ell]}=\nabla A\oplus \Delta A$ of $A$. 
Since $A_0$ is of finite global dimension, so is $\nabla A$ by Lemma \ref{lemma-ARS}.
Moreover by $g(A)\leq 0$ and Lemma \ref{qv-g-asid} (2), we have $\alpha(\Delta A)\leq 1$.
Therefore, $\Delta A$ is a tilting object in $\stabgrCM A^{[\ell]}$ by Lemma \ref{tilting theorem 1 trivial extension}.
Thus, $V$ is a tilting object in $\stabgrCM A$ by Remark \ref{reduction-qv}. 
\end{proof}

We remark that for a graded $1$-IG-algebra $A$ with $g(A)=1$, 
the finiteness of global dimension of $A_0$ does not imply that $V$ is a tilting object in $\stabgrCM A$.
Actually, we have two graded $1$-IG-algebras $A$ with $g(A)=1$ in Example \ref{example-g(A)=1}.
For the first one, $V$ is a tilting object in $\stabgrCM A$.
For the other, $V$ is not a tilting object in $\stabgrCM A$.

\begin{example}\label{example-g(A)=1}
As examples, we deal with the truncated preprojective algebras $\Pi(Q)_w$ where $Q$ is 
\[
\begin{xy}
	(0,10)="O",
	"O"+<0cm,0cm>="1"*{1},
	"1"+<-0.8cm,-1.2cm>="2"*{2},
	"1"+<0.8cm,-1.2cm>="3"*{ \ 3.},
	\ar "1"+<-0.1cm,-0.2cm>;"2"+<0.2cm,0.2cm>_{\alpha}
	\ar "1"+<0.1cm,-0.2cm>;"3"+<-0.2cm,0.2cm>^{\gamma}
	\ar "2"+<0.2cm,-0cm>;"3"+<-0.2cm,0cm>_{\beta}
\end{xy}
\]
and $w$ is an element in the Coxeter group $W_Q$ associated to $Q$.
For the definition and notations, we refer to Section \ref{section-trucated-pp-alg}.
In stead of introducing them, we give $\Pi(Q)_w$ by quivers with relations here.
\begin{enumerate}[(1)]
\item 
Let $A$ be a graded algebra defined by the following graded quiver with relations.
\[
\begin{xy}
	(0,10)="O",
	"O"+<0cm,0cm>="1"*{1},
	"1"+<-1cm,-1.5cm>="2"*{2},
	"1"+<1cm,-1.5cm>="3"*{3},
	\ar@/^/ "2"+<0cm,0.2cm>;"1"+<-0.3cm,-0.1cm>^{\alpha^*}
	\ar "1"+<0.1cm,-0.2cm>;"3"+<-0.2cm,0.2cm>_{\gamma}
	\ar@/_/ "3"+<0cm,0.2cm>;"1"+<0.3cm,-0.1cm>_{\gamma^*}
	\ar "2"+<0.2cm,-0cm>;"3"+<-0.2cm,0cm>_{\beta}
\end{xy} 
\hspace{5mm}
\begin{cases}
\deg \beta=\deg\gamma=0 \\
\deg \alpha^*=\deg\gamma^*=1,
\end{cases}
\mbox{ with } 
\hspace{5mm}
\begin{cases}
\gamma\gamma^*=0 \\
\gamma^*\gamma=0
\end{cases}
\]
This algebra is the truncated preprojective algebra $\Pi(Q)_w$ associated to $w=s_2s_3s_1s_3$.
The Loewy series of indecomposable projective $A$-modules $e_1A,e_2A,e_3A$ and indecomposable injective $A$-modules $\tuD(Ae_1),\tuD(Ae_2),\tuD(Ae_3)$ are as follows.
\[
	\begin{xy} 
	(-15,7)*{e_1A=},
	(0,0)="O",
	"O"+<0cm,1.2cm>="11"*{{\bf 1}},
	"11"+<-0.3cm,-0.5cm>="21"*{},
	"21"+<-0.3cm,-0.5cm>="31"*{},
	"21"+/r0.6cm/="22"*{{\bf 3}},
	"31"+/r0.6cm/="32"*{},
	"32"+/r0.6cm/="33"*{},
	\ar@{-}"11"+/dr/;"22"+/u/<-2pt>
	\end{xy},
\hspace{10mm}
	\begin{xy} 
	(-15,7)*{e_2A=},
	(0,0)="O",
	"O"+<0cm,1.2cm>="11"*{{\bf 2}},
	"11"+<-0.3cm,-0.5cm>="21"*{{\bf 3}},
	"21"+<-0.3cm,-0.5cm>="31"*{1},
	"21"+/r0.6cm/="22"*{1},
	"31"+/r0.6cm/="32"*{},
	"32"+/r0.6cm/="33"*{3},

	\ar@{-}"11"+/dl/;"21"+/u/<2pt>
	\ar@{.}"11"+/dr/;"22"+/u/<-2pt>
	\ar@{.}"21"+/dl/;"31"+/u/<2pt>
	\ar@{-}"22"+/dr/;"33"+/u/<-2pt>
	\end{xy},
\hspace{10mm}
	\begin{xy} 
	(-15,7)*{e_3A=},
	(0,0)="O",
	"O"+<0cm,1.2cm>="11"*{{\bf 3}},
	"11"+<-0.3cm,-0.5cm>="21"*{1},
	"21"+<-0.3cm,-0.5cm>="31"*{},
	"21"+/r0.6cm/="22"*{},
	"31"+/r0.6cm/="32"*{},
	"32"+/r0.6cm/="33"*{},

	\ar@{.}"11"+/dl/;"21"+/u/<2pt>
	\end{xy}.
\]
\ 
\[
	\begin{xy}
	(-20,17)*{I_1=\tuD(Ae_1)=},
	(0,0)="O",
	"O"+<0cm,1.2cm>="11"*{{\bf 1}},
	"11"+<-0.3cm,+0.5cm>="21"*{2},
	"21"+<-0.3cm,+0.5cm>="31"*{},
	"21"+/r0.6cm/="22"*{3},
	"31"+/r0.6cm/="32"*{},
	"32"+/r0.6cm/="33"*{2},
	
	\ar@{.}"21"+/d/;"11"+/lu/<2pt>
	\ar@{.}"22"+/d/;"11"+/ru/<-2pt>
	\ar@{-}"33"+/d/;"22"+/ru/<-2pt>
	\end{xy},
\hspace{10mm}
	\begin{xy} 
	(-17,17)*{I_2=\tuD(Ae_2)=},
	(0,0)="O",
	"O"+<0cm,1.2cm>="11"*{{\bf 2}},
	"11"+<-0.3cm,+0.5cm>="21"*{},
	"21"+<-0.3cm,+0.5cm>="31"*{},
	"21"+/r0.6cm/="22"*{},
	"31"+/r0.6cm/="32"*{},
	"32"+/r0.6cm/="33"*{},
	\end{xy},
\hspace{10mm}
	\begin{xy} 
	(-23,17)*{I_3=\tuD(Ae_3)=},
	(0,0)="O",
	"O"+<0cm,1.2cm>="11"*{{\bf 3}},
	"11"+<-0.3cm,+0.5cm>="21"*{{\bf 1}},
	"21"+<-0.3cm,+0.5cm>="31"*{2},
	"21"+/r0.6cm/="22"*{{\bf 2}},
	"31"+/r0.6cm/="32"*{},
	"32"+/r0.6cm/="33"*{},

	\ar@{-}"21"+/d/;"11"+/lu/<2pt>
	\ar@{-}"22"+/d/;"11"+/ru/<-2pt>
	\ar@{.}"31"+/d/;"21"+/lu/<2pt>
	\end{xy}.
\]
This algebra $A$ is a graded $1$-IG-algebra with $g(A)=1$. 
In fact, minimal graded injective resolutions of indecomposable projective $A$-modules are given as follows. 
\[
0 \to e_1A \to I_3 \to I_2(1) \oplus I_2 \to 0,
\]
\[
0 \to e_2A \to I_1(-1)\oplus I_3(-1)  \to I_2^{\oplus 2} \oplus I_2(-1) \to 0,
\]
\[
0 \to e_3A \to I_1(-1) \to I_2^{\oplus 2} \to 0.
\]

In this example, $V$ is a tilting object in $\stabgrCM A$.
Indeed, $V$ is given by $V=X^{\oplus2}\oplus e_1A$ where $X$ is as follows.
\[
	\begin{xy} 
	(-8,10)*{X=},
	(0,0)="O",
	"O"+<0cm,1.2cm>="11"*{{\bf 1}},
	"11"+<-0.3cm,-0.5cm>="21"*{},
	"21"+/r0.6cm/="22"*{},
	\end{xy}.
\]
By Theorem \ref{silting theorem} (2), $V$ is a silting object in $\stabgrCM A$.
Moreover, it is easy to check that $\underline{\Hom}_A(V,V[i])_0=0$ for all $i<0$.
Thus, $V$ is a tilting object in $\stabgrCM A$.
Since $\underline{\End}_A(T)_0$ is Morita equivalent to $K$, there is an equivalence $\stabgrCM A \cong \sfD^{\mrb}(\mod K)$ of triangulated categories.

\item
Let $A$ be a graded algebra defined by the following graded quiver with relations.
\[
\begin{xy}
	(0,10)="O",
	"O"+<0cm,0cm>="1"*{1},
	"1"+<-1cm,-1.5cm>="2"*{2},
	"1"+<1cm,-1.5cm>="3"*{3},
	\ar "1"+<-0.1cm,-0.2cm>;"2"+<0.2cm,0.2cm>^{\alpha}
	\ar@/^/ "2"+<0cm,0.2cm>;"1"+<-0.3cm,-0.1cm>^{\alpha^*}
	\ar "1"+<0.1cm,-0.2cm>;"3"+<-0.2cm,0.2cm>_{\gamma}
	\ar@/_/ "3"+<0cm,0.2cm>;"1"+<0.3cm,-0.1cm>_{\gamma^*}
	\ar "2"+<0.2cm,-0cm>;"3"+<-0.2cm,0cm>^{\beta}
	\ar@/^/ "3"+<-0.2cm,-0.2cm>;"2"+<0.2cm,-0.2cm>^{\beta^*}
\end{xy} 
\hspace{5mm}
\begin{cases}
\deg \alpha=\deg \beta=\deg\gamma=0 \\
\deg \alpha^*=\deg \beta^*=\deg\gamma^*=1,
\end{cases}
\mbox{ with } 
\hspace{5mm}
\begin{cases}
\alpha\alpha^*+\gamma\gamma^*=0 \\
\alpha^*\alpha-\beta\beta^*=0 \\
\gamma^*\gamma=\beta^*\beta=0 \\
\gamma^*\alpha\beta=0\\
\beta^*\alpha^*\gamma=0 \\
\alpha\beta=0
\end{cases}
\]
This algebra is the truncated preprojective algebra $\Pi(Q)_w$ associated to $w=s_2s_3s_1s_3s_2s_1$.
The Loewy series of indecomposable projective $A$-modules $e_1A,e_2A,e_3A$ and indecoposable injective $A$-modules $\tuD(Ae_1),\tuD(Ae_2),\tuD(Ae_3)$ are as follows.
\[
	\begin{xy} 
	(-15,2)*{e_1A=},
	(0,0)="O",
	"O"+<0cm,1.2cm>="11"*{{\bf 1}},
	"11"+<-0.3cm,-0.5cm>="21"*{{\bf 2}},
	"21"+<-0.3cm,-0.5cm>="31"*{},
	"31"+<-0.3cm,-0.5cm>="41"*{},
	"41"+<-0.3cm,-0.5cm>="51"*{},
	"21"+/r0.6cm/="22"*{{\bf 3}},
	"31"+/r0.6cm/="32"*{1},
	"32"+/r0.6cm/="33"*{2},
	"41"+/r0.6cm/="42"*{},
	"42"+/r0.6cm/="43"*{},
	"43"+/r0.6cm/="44"*{1},
	"51"+/r0.6cm/="52"*{},
	"52"+/r0.6cm/="53"*{},
	"53"+/r0.6cm/="54"*{},
	"54"+/r0.6cm/="55"*{},
	
	\ar@{-}"11"+/dl/;"21"+/u/<2pt>
	\ar@{-}"11"+/dr/;"22"+/u/<-2pt>
	\ar@{.}"21"+/dr/;"32"+/u/<-2pt>
	\ar@{.}"22"+/dl/;"32"+/u/<2pt>
	\ar@{.}"22"+/dr/;"33"+/u/<-2pt>
	\ar@{.}"33"+/dr/;"44"+/u/<-2pt>
	\end{xy},
\hspace{10mm}
	\begin{xy} 
	(-17,2)*{e_2A=},
	(0,0)="O",
	"O"+<0cm,1.2cm>="11"*{{\bf 2}},
	"11"+<-0.3cm,-0.5cm>="21"*{{\bf 3}},
	"21"+<-0.3cm,-0.5cm>="31"*{1},
	"31"+<-0.3cm,-0.5cm>="41"*{2},
	"41"+<-0.3cm,-0.5cm>="51"*{},
	"21"+/r0.6cm/="22"*{1},
	"31"+/r0.6cm/="32"*{2},
	"32"+/r0.6cm/="33"*{3},
	"41"+/r0.6cm/="42"*{},
	"42"+/r0.6cm/="43"*{1},
	"43"+/r0.6cm/="44"*{2},
	"51"+/r0.6cm/="52"*{},
	"52"+/r0.6cm/="53"*{},
	"53"+/r0.6cm/="54"*{},
	"54"+/r0.6cm/="55"*{1},

	\ar@{-}"11"+/dl/;"21"+/u/<2pt>
	\ar@{.}"11"+/dr/;"22"+/u/<-2pt>
	\ar@{.}"21"+/dl/;"31"+/u/<2pt>
	\ar@{.}"21"+/dr/;"32"+/u/<-2pt>
	\ar@{-}"22"+/dl/;"32"+/u/<2pt>
	\ar@{-}"22"+/dr/;"33"+/u/<-2pt>
	\ar@{-}"31"+/dl/;"41"+/u/<2pt>
	\ar@{.}"32"+/dr/;"43"+/u/<-2pt>
	\ar@{.}"33"+/dl/;"43"+/u/<2pt>
	\ar@{.}"33"+/dr/;"44"+/u/<-2pt>
	\ar@{.}"44"+/dr/;"55"+/u/<-2pt>
	\end{xy},
\hspace{10mm}
	\begin{xy} 
	(-17,2)*{e_3A=},
	(0,0)="O",
	"O"+<0cm,1.2cm>="11"*{{\bf 3}},
	"11"+<-0.3cm,-0.5cm>="21"*{1},
	"21"+<-0.3cm,-0.5cm>="31"*{2},
	"31"+<-0.3cm,-0.5cm>="41"*{},
	"41"+<-0.3cm,-0.5cm>="51"*{},
	"21"+/r0.6cm/="22"*{2},
	"31"+/r0.6cm/="32"*{},
	"32"+/r0.6cm/="33"*{1},
	"41"+/r0.6cm/="42"*{},
	"42"+/r0.6cm/="43"*{},
	"43"+/r0.6cm/="44"*{},
	"51"+/r0.6cm/="52"*{},
	"52"+/r0.6cm/="53"*{},
	"53"+/r0.6cm/="54"*{},
	"54"+/r0.6cm/="55"*{},
	
	\ar@{.}"11"+/dl/;"21"+/u/<2pt>
	\ar@{.}"11"+/dr/;"22"+/u/<-2pt>
	\ar@{-}"21"+/dl/;"31"+/u/<2pt>
	\ar@{.}"22"+/dr/;"33"+/u/<-2pt>
	\end{xy}.
\]
\[
	\begin{xy} 
	(-30,22)*{I_1=\tuD(Ae_1)=},
	(0,0)="O",
	"O"+<0cm,1.2cm>="11"*{{\bf 1}},
	"11"+<-0.3cm,+0.5cm>="21"*{2},
	"21"+<-0.3cm,+0.5cm>="31"*{3},
	"31"+<-0.3cm,+0.5cm>="41"*{1},
	"41"+<-0.3cm,+0.5cm>="51"*{2},
	"21"+/r0.6cm/="22"*{3},
	"31"+/r0.6cm/="32"*{1},
	"32"+/r0.6cm/="33"*{2},
	"41"+/r0.6cm/="42"*{2},
	"42"+/r0.6cm/="43"*{},
	"43"+/r0.6cm/="44"*{},
	"51"+/r0.6cm/="52"*{},
	"52"+/r0.6cm/="53"*{},
	"53"+/r0.6cm/="54"*{},
	"54"+/r0.6cm/="55"*{},

	\ar@{.}"21"+/d/;"11"+/lu/<2pt>
	\ar@{.}"22"+/d/;"11"+/ru/<-2pt>
	\ar@{.}"31"+/d/;"21"+/lu/<2pt>
	\ar@{-}"32"+/d/;"21"+/ru/<-2pt>	
	\ar@{-}"32"+/d/;"22"+/lu/<2pt>
	\ar@{-}"33"+/d/;"22"+/ru/<-2pt>
	\ar@{-}"41"+/d/;"31"+/lu/<2pt>
	\ar@{-}"42"+/d/;"31"+/ru/<-2pt>
	\ar@{.}"42"+/d/;"32"+/lu/<2pt>
	\ar@{.}"51"+/d/;"41"+/lu/<2pt>
	\end{xy},
\hspace{5mm}
	\begin{xy}
	(-25,22)*{I_2=\tuD(Ae_2)=},
	(0,0)="O",
	"O"+<0cm,1.2cm>="11"*{{\bf 2}},
	"11"+<-0.3cm,+0.5cm>="21"*{3},
	"21"+<-0.3cm,+0.5cm>="31"*{1},
	"31"+<-0.3cm,+0.5cm>="41"*{2},
	"41"+<-0.3cm,+0.5cm>="51"*{},
	"21"+/r0.6cm/="22"*{{\bf 1}},
	"31"+/r0.6cm/="32"*{2},
	"32"+/r0.6cm/="33"*{3},
	"41"+/r0.6cm/="42"*{},
	"42"+/r0.6cm/="43"*{},
	"43"+/r0.6cm/="44"*{2},
	"51"+/r0.6cm/="52"*{},
	"52"+/r0.6cm/="53"*{},
	"53"+/r0.6cm/="54"*{},
	"54"+/r0.6cm/="55"*{},

	\ar@{.}"21"+/d/;"11"+/lu/<2pt>
	\ar@{-}"22"+/d/;"11"+/ru/<-2pt>
	\ar@{-}"31"+/d/;"21"+/lu/<2pt>
	\ar@{-}"32"+/d/;"21"+/ru/<-2pt>	
	\ar@{.}"32"+/d/;"22"+/lu/<2pt>
	\ar@{.}"33"+/d/;"22"+/ru/<-2pt>
	\ar@{.}"41"+/d/;"31"+/lu/<2pt>
	\ar@{-}"44"+/d/;"33"+/ru/<-2pt>
	\end{xy},
\hspace{5mm}
	\begin{xy} 
	(-25,22)*{I_3=\tuD(Ae_3)=},
	(0,0)="O",
	"O"+<0cm,1.2cm>="11"*{{\bf 3}},
	"11"+<-0.3cm,+0.5cm>="21"*{{\bf 1}},
	"21"+<-0.3cm,+0.5cm>="31"*{2},
	"31"+<-0.3cm,+0.5cm>="41"*{},
	"41"+<-0.3cm,+0.5cm>="51"*{},
	"21"+/r0.6cm/="22"*{{\bf 2}},
	"31"+/r0.6cm/="32"*{},
	"32"+/r0.6cm/="33"*{},
	"41"+/r0.6cm/="42"*{},
	"42"+/r0.6cm/="43"*{},
	"43"+/r0.6cm/="44"*{},
	"51"+/r0.6cm/="52"*{},
	"52"+/r0.6cm/="53"*{},
	"53"+/r0.6cm/="54"*{},
	"54"+/r0.6cm/="55"*{},

	\ar@{-}"21"+/d/;"11"+/lu/<2pt>
	\ar@{-}"22"+/d/;"11"+/ru/<-2pt>
	\ar@{.}"31"+/d/;"21"+/lu/<2pt>
	\end{xy}.
\]
This algebra $A$ is a graded $1$-IG-algebra with $g(A)=1$. 
In fact, minimal graded injective resolutions of indecomposable projective $A$-modules are given as follows. 
\[
0 \to e_1A \to I_1(-1) \oplus  I_1(-2) \to I_3(1)\oplus I_3\oplus I_3(-1) \to 0,
\]
\[
0 \to e_2A \to I_2(-1) \oplus  I_1(-2) \oplus  I_1(-3)  \to I_3^{\oplus 2} \oplus I_3(-1)\oplus I_3(-2) \to 0,
\]
\[
0 \to e_3A \to I_2(-1) \oplus  I_1(-2) \to I_3^{\oplus 2} \oplus I_3(-1) \to 0.
\]

In this example, $V$ is not a tilting object in $\stabgrCM A$.
Indeed, $V$ is given by $V=X^{\oplus5}\oplus Y^{\oplus2} \oplus Z^{\oplus3} \oplus e_1A$ where $X,Y,Z$ are as follows.
\[
	\begin{xy} 
	(-13,7)*{X=},
	(0,0)="O",
	"O"+<0cm,1.2cm>="11"*{{\bf 1}},
	"11"+<-0.3cm,-0.5cm>="21"*{},
	"21"+/r0.6cm/="22"*{},
	\end{xy},
	\hspace{15mm}
	\begin{xy} 
	(-13,7)*{Y=},
	(0,0)="O",
	"O"+<0cm,1.2cm>="11"*{{\bf 2}},
	"11"+<-0.3cm,-0.5cm>="21"*{},
	"21"+/r0.6cm/="22"*{1},
	
	\ar@{.}"11"+/dr/;"22"+/u/<-2pt>
	\end{xy},
	\hspace{15mm}
	\begin{xy} 
	(-13,7)*{Z=},
	(0,-0)="O",
	"O"+<0cm,1.2cm>="11"*{{\bf 1}},
	"11"+<-0.3cm,-0.5cm>="21"*{{\bf 2}},
	"21"+/r0.6cm/="22"*{},

	\ar@{-}"11"+/dl/;"21"+/u/<2pt>
	\end{xy}.
\]
One can check that $\stabHom_{A}(Y,X[-1])_0=\stabHom_{A}(Y,\Omega_A^1(X))_0\neq0$.
Therefore, $V$ is not a pretilting object. 

Note that $\stabgrCM A$ has a tilting object.
We will show the fact in Example \ref{example-truncated-pp-1} by using graded Morita equivalences induced from sink reflections of quivers.
\end{enumerate}
\end{example}

\subsection{Sufficient conditions for $A$ to satisfy $g(A)\leq0$}
In this subsection, we give a handy criterion for a graded $1$-IG-algebra $A$ to satisfy $g(A)\leq0$.

\begin{theorem}\label{main theorem 2}
Let $A = \bigoplus_{i= 0}^{\ell} A_{i}$ be a graded $1$-IG-algebra. 
Then, the following assertions hold.
\begin{enumerate}[\rm (1)]
\item 
If $\Hom_{A_{0}^{\op}}(A_{> 0}, A_{0} ) = 0$ holds, then we have $g(A)\leq 0$.
\item
Assume that $A_0$ has finite global dimension. 
If $\Hom_{A_{0}}(A_{> 0}, A_{0} ) = 0$ holds, then we have $g(A)\leq 0$.
\end{enumerate}
\end{theorem}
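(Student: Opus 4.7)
My plan is to prove (1) first and then deduce (2) from it. For (2) from (1): apply (1) to the opposite algebra $A^{\op}$, which is again graded $1$-IG. Unwinding the opposite conventions, one checks that $\Hom_{(A^{\op})_0^{\op}}((A^{\op})_{>0}, (A^{\op})_0)$ coincides with $\Hom_{A_0}(A_{>0}, A_0)$, so (1) applied to $A^{\op}$ yields $g(A^{\op}) \leq 0$ under the hypothesis of (2). Since $\gldim A_0 < \infty$, Corollary \ref{symmetry g(A)<=0} then gives $g(A) \leq 0$.

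For (1), I proceed cohomologically. Since $A$ is in non-negative degrees, $\Hom_A(A/J_A, A)_{-m} = 0$ is automatic for $m > 0$, so $g(A) \leq 0$ is equivalent to $\Ext^1_A(A/J_A, A)_{-m} = 0$ for all $m > 0$. Applying $\Hom_A(-, A)$ to the short exact sequences
\[
0 \to J_A \to A \to A/J_A \to 0, \qquad 0 \to A_{>0} \to J_A \to J_{A_0} \to 0
\]
of graded right $A$-modules (where in the second, $J_{A_0}$ is regarded as an $A$-module via $A \twoheadrightarrow A_0$), and using that $A_{-m} = 0$ and $\Hom_A(J_{A_0}, A)_{-m} = 0$ for $m > 0$, one obtains
\[
\Ext^1_A(A/J_A, A)_{-m} \;\cong\; \Hom_A(J_A, A)_{-m} \;\cong\; \Hom_A(A_{>0}/J_{A_0}A_{>0}, A)_{-m}.
\]
Via the tensor-hom adjunction applied to $A_{>0}/J_{A_0}A_{>0} \cong (A_0/J_{A_0}) \otimes_{A_0} A_{>0}$, the right-hand side is the subspace of $\Hom_A(A_{>0}, A)_{-m}$ annihilated on the right by $J_{A_0}$, under the natural right $A_0$-action $(f \cdot b)(x) := f(bx)$ coming from left multiplication on $A_{>0}$.

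It remains to show this space vanishes under the hypothesis. My plan is to reduce to the trivial-extension case via the quasi-Veronese construction: Lemma \ref{qv-g-asid} gives $g(A) \leq 0 \iff g(A^{[\ell]}) \leq 0$ for $A^{[\ell]} = \nabla A \oplus \Delta A$, and the Morita equivalence $\qv$ of Lemma \ref{basics-quasi-V} translates the hypothesis $\Hom_{A_0^{\op}}(A_{>0}, A_0) = 0$ into $\Hom_{\nabla A^{\op}}(\Delta A, \nabla A) = 0$ for $A^{[\ell]}$ (using the block decomposition of $\nabla A$-modules). In the trivial-extension setting $B = \Lambda \oplus C$, only degree $-1$ can contribute (higher negative degrees vanish trivially since $J_B$ sits in degrees $\{0,1\}$), and the preceding identifications show the group of interest is the space of right-$\Lambda$-linear maps $f \colon C \to \Lambda$ satisfying $f(C) \cdot C = 0$---so $f(C)$ lies in the left annihilator $L_\Lambda(C) := \{a \in \Lambda : aC = 0\}$---and $f(J_\Lambda C) = 0$.

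The principal obstacle---and the technical heart of the proof---is to argue that any such $f$ is automatically left-$\Lambda$-linear, whereupon the hypothesis $\Hom_{\Lambda^{\op}}(C, \Lambda) = 0$ immediately forces $f = 0$. I expect this to follow from the asid structure on $C$ imposed by $B$ being $1$-IG: the compatibility conditions $f(C) \subset L_\Lambda(C)$ and $f(J_\Lambda C) = 0$, combined with the bimodule duality relations coming from the injective resolution $0 \to B \to I^0 \to I^1 \to 0$, should force $f$ to respect left multiplication by $\Lambda$.
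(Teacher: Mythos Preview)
Your derivation of (2) from (1) via $A^{\op}$ and Corollary~\ref{symmetry g(A)<=0} is correct and matches the paper exactly.

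For (1), your cohomological setup is sound: the identification
\[
\Ext^1_B(B/J_B,B)_{-1}\;\cong\;\Hom_B(J_B,B)_{-1}\;\cong\;\bigl\{\,f\in\Hom_{\Lambda}(C_\Lambda,\Lambda_\Lambda)\ \big|\ f(J_\Lambda C)=0,\ f(C)\cdot C=0\,\bigr\}
\]
in the trivial-extension case $B=\Lambda\oplus C$ is correct, and the reduction to this case via Lemma~\ref{qv-g-asid} is the right move. However, the final step---your ``principal obstacle''---is a genuine gap. You assert that the asid/1-IG structure should force any such $f$ to be \emph{left}-$\Lambda$-linear, but you give no mechanism, and in fact there is no reason to expect this: the condition $f(J_\Lambda C)=0$ only says $f(\lambda c)=0$ for $\lambda\in J_\Lambda$, while left-linearity would also require $\lambda f(c)=0$, i.e.\ $f(C)\subset\soc({}_\Lambda\Lambda)$, which is not implied by your two constraints. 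The vague appeal to ``bimodule duality relations coming from the injective resolution'' is not an argument.

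The paper takes a different and sharper route. Rather than trying to show that an arbitrary such $f$ is left-linear, it proves a structural lemma (Lemma~\ref{necessary condition for alpha = 2}): if $\alpha_r(C)=2$, then by analysing the degree $-1$ piece of the minimal injective resolution $0\to B\to I^0\to I^1\to 0$ one finds a primitive idempotent $e\in\Lambda$ with $Ce=0$ and $\Lambda e$ a direct summand of ${}_\Lambda C$. The key observation is that $(I^0)_{-1}\cong(I^1)_{-1}$ (since $B_{-1}=0$), $(I^0)_{-1}\in\add\tuD(C)$, and $(I^1)_{-1}$ contains some $e\tuD(\Lambda)$; dualising gives the summand. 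This yields a nonzero \emph{left}-$\Lambda$-linear map $C\twoheadrightarrow\Lambda e\hookrightarrow\Lambda$, contradicting the hypothesis. The point is not that your $f$ is left-linear, but that the existence of any nonzero such $f$ (equivalently $\alpha_r=2$) forces, via the injective resolution, the existence of a \emph{different} nonzero left-linear map. Your proposal is missing precisely this extraction step.
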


%
%

To prove the above, we give a necessary condition for asid bimodules that its asid numbers are equal to $2$. 

\begin{lemma}\label{necessary condition for alpha = 2}
Let $\Lambda$ be an algebra, $C$ an asid bimodule with $\alpha_r(C)=2$, and $A=\Lambda\oplus C$ the trivial extension algebra.
Assume that $A$ is a graded $1$-IG-algebra. 
Then, there is an idempotent $f$ of $\Lambda$ 
such that $Cf= 0$ and   the projective module $\Lambda f$ is a direct summand of $C$ as left $\Lambda$-modules.  
\end{lemma}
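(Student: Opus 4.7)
The plan is to extract the desired idempotent $f$ directly from the minimal graded injective resolution
\[
0\to A_{A}\to I^{0}\to I^{1}\to 0
\]
of $A$ as a graded right $A$-module. By Lemma~\ref{asid=g}, the hypothesis $\alpha_{r}(C)=2$ is equivalent to $g(A)=1$. Since $A=\Lambda\oplus C$ is concentrated in degrees $\{0,1\}$, the equality $A_{-1}=0$ combined with the socle description of $g(A)$ in Definition~\ref{def-g(M)} forces $\soc(I^{1})_{-1}\neq 0$. Hence there exists a primitive idempotent $e_{j}\in\Lambda$ such that the indecomposable graded injective module $I_{j}(1)=\tuD(Ae_{j})(1)$ appears as a direct summand of $I^{1}$; this $e_{j}$ will play the role of $f$.

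First I would verify $Ce_{j}=0$ via a degree estimate. Because $\soc(A)$ lives in degrees $\{0,1\}$, the injective envelope $I^{0}$ of $A$ is a direct sum of copies of modules of the form $I_{k}$ (supported in $\{-1,0\}$) and $I_{k}(-1)$ (supported in $\{0,1\}$); consequently both $I^{0}$ and the quotient $I^{1}=I^{0}/A$ are concentrated in degrees $\{-1,0,1\}$. On the other hand $(I_{j}(1))_{-2}=\tuD(Ce_{j})$, so the fact that $I_{j}(1)$ sits inside $I^{1}$ forces $\tuD(Ce_{j})=0$, i.e.\ $Ce_{j}=0$, establishing the first half of the conclusion.

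Next, I would compare the degree $-1$ parts of $I^{0}$ and $I^{1}$. Since $A_{-1}=0$, the short exact sequence above induces in degree $-1$ an isomorphism
\[
(I^{0})_{-1}\xrightarrow{\ \cong\ }(I^{1})_{-1}
\]
of right $\Lambda$-modules. Using the identifications $(I_{k})_{-1}=\tuD(Ce_{k})$, $(I_{k}(-1))_{-1}=0$, and, when $Ce_{k}=0$, $(I_{k}(1))_{-1}=\tuD(\Lambda e_{k})$, one sees that $(I^{0})_{-1}\cong\bigoplus_{k}\tuD(Ce_{k})^{a_{k}}$ for some non-negative integers $a_{k}$, while $(I^{1})_{-1}$ contains $\tuD(\Lambda e_{j})$ as a direct summand coming from the selected $I_{j}(1)$-summand of $I^{1}$.

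To finish, I would invoke the Krull-Schmidt theorem. Since $e_{j}$ is primitive, $\tuD(\Lambda e_{j})$ is an indecomposable right $\Lambda$-module, and by the previous paragraph it is a direct summand of $\bigoplus_{k}\tuD(Ce_{k})^{a_{k}}$; therefore it is a summand of some $\tuD(Ce_{k})$. Dualizing by $K$-duality, $\Lambda e_{j}$ is a direct summand of $Ce_{k}$ as a left $\Lambda$-module, and because $C=\bigoplus_{k}Ce_{k}$ is also the decomposition of ${}_{\Lambda}C$ into left $\Lambda$-submodules, this yields $\Lambda e_{j}$ as a direct summand of ${}_{\Lambda}C$, as required. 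The main subtlety I anticipate is carefully translating between the right $\Lambda$-module data naturally produced by the injective resolution of the right $A$-module $A$ and the left $\Lambda$-module summand of $C$ claimed by the lemma, which is handled cleanly by $K$-duality.
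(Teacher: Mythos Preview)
Your proof is correct and follows essentially the same approach as the paper's: both locate an indecomposable injective summand of $I^{1}$ with socle in degree $-1$, use the vanishing of $(I^{1})_{-2}$ to obtain $Cf=0$, and then compare $(I^{0})_{-1}\cong(I^{1})_{-1}$ to exhibit $\Lambda f$ as a left summand of $C$ via $K$-duality. The paper's write-up is a bit more compressed (it phrases things as $f\tuD(\Lambda)$ being a summand of $\tuD(C)$ and notes $I^{0}_{-1}\in\add\tuD(C)$ directly), but the argument is the same.
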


\begin{proof}
Let $ 0 \to A \to I^{0} \to I^{1} \to 0$ be a minimal graded injective resolution of $A_A$. 
The condition $\alpha_{r} (C)= 2$ implies that $(I^{1})_{-1}$ has a direct summand of the form  $f \tuD(\Lambda)$ for some idempotent $f \in \Lambda$. 
Observe that it follows from $I^{1}_{-2} = 0$ that $f\tuD(C) = 0$. 
On the other hand, it follows from $\soc (I^{0} )_{i} = 0$ for $i \neq 0,1$ that $I^{0}_{-1}$ belongs to $\add \tuD(C)$. 
 
Since $A_{-1} = 0$ the differential $I^{0} \to I^{1}$ gives an  isomorphism $(I^{0})_{-1} \xrightarrow{\cong} (I^{1})_{-1}$ of $\Lambda$-modules. 
From the above observations, we deduce that  $f\tuD(\Lambda)$ is a direct summand of $\tuD(C)$ as  right $\Lambda$-modules.   
\end{proof}

%

\begin{proof}[Proof of Theorem \ref{main theorem 2}]
(1)
We take the $\ell$-th quasi-Veronese algebra $A^{[\ell]}=\nabla A\oplus \Delta A$.
We denote by $\alpha_{r}$ the right asid number of the asid bimodule $\Delta A$ over $\nabla A$.
Note that $0\leq \alpha_r\leq 2$ by Corollary \ref{bound of asid number}.

Assume that $\Hom_{A_{0}^{\op}}(A_{> 0}, A_{0} ) = 0$.
In the following, we show that the condition $\alpha_{r} = 2$ leads to a contradiction. 
If $\alpha_{r}=2$, then the left $\nabla A$-modules $\nabla A$ and $\Delta A$ have a common indecomposable direct summand $P$ by Lemma \ref{necessary condition for alpha = 2}. 
Since $P$ is an indecomposable direct summand of the left $\nabla A$-module $\nabla A$, it is of the form  
\[
P = \begin{pmatrix} A_{i}f  \\ \vdots \\ A_{0}f \\0 \\ \vdots \\ 0 \end{pmatrix}
\]
where  $0\leq i \leq \ell - 1$ and  $f$ is a primitive idempotent of $A_{0}$.
Since $P$ is also a direct summand of the left $\nabla A$-module $\Delta A$, $A_{0}f$ is a direct summand of $A_{j}$ for some $j = 1, \cdots, \ell$  as left $A_{0}$-modules. This contradicts to the assumption $\Hom_{A_{0}^{\op}}(A_{> 0}, A_{0} ) = 0$. 
Thus we have $\alpha_r\leq1$, and so $g(A)\leq0$ by Lemma \ref{qv-g-asid}.

(2)
Assume that $\gldim A_0<\infty$ and $\Hom_{A_{0}}(A_{> 0}, A_{0} ) = 0$.
By the opposite version of (1), we have $g(A^{\op})\leq 0$. 
Then, $g(A)\leq 0$ also holds  by Corollary \ref{symmetry g(A)<=0}.
\end{proof}

We have the following result as a corollary of Theorem \ref{main theorem 2}.
It will be used later. 

\begin{corollary}\label{cor of main theorem 2}
Let $A = \bigoplus_{i= 0}^{\ell} A_{i}$ be a graded $1$-IG-algebra. 
Assume that $A$ satisfies the following conditions.
\begin{enumerate}[\em (i)]
\item 
$\gldim A_{0} = 1$. 
\item 
An $A_0$-module $A_{>0}$ does not contain projective $A_0$-modules as direct summands.
\end{enumerate}
Then, we have $g(A)\leq 0$.
\end{corollary}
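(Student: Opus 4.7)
The plan is to reduce the claim to Theorem \ref{main theorem 2}(2). Hypothesis (i) already provides $\gldim A_0 = 1 < \infty$, so it suffices to verify the other input of that theorem, namely $\Hom_{A_0}(A_{>0}, A_0) = 0$; once this is in hand, Theorem \ref{main theorem 2}(2) delivers $g(A)\leq 0$ immediately.

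To establish the Hom-vanishing I would exploit the hereditary nature of $A_0$ forced by (i). Given any $A_0$-linear map $f\colon A_{>0}\to A_0$, the image $\im f$ is a submodule of the projective right $A_0$-module $A_0$, and heredity forces $\im f$ to be projective. The surjection $A_{>0}\twoheadrightarrow \im f$ therefore splits, exhibiting $\im f$ as a direct summand of the right $A_0$-module $A_{>0}$. Condition (ii) forbids nonzero projective direct summands of $A_{>0}$, whence $\im f = 0$ and so $f=0$. This proves $\Hom_{A_0}(A_{>0}, A_0) = 0$, and Theorem \ref{main theorem 2}(2) then closes the argument.

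The only subtle point is the handedness. Condition (ii) is phrased without specifying a side, so one must decide whether to read $A_{>0}$ as a right or a left $A_0$-module. The argument above treats it on the right in order to apply part (2) of Theorem \ref{main theorem 2}; if instead one interprets (ii) on the left, then the same splitting reasoning carried out for the opposite algebra $A_0^{\op}$ (which is again hereditary, since global dimension is left-right symmetric for finite dimensional algebras) yields $\Hom_{A_0^{\op}}(A_{>0}, A_0) = 0$, and one invokes part (1) of Theorem \ref{main theorem 2} instead. I do not foresee any genuine obstacle beyond this minor bookkeeping: the whole proof amounts to the standard splitting lemma for hereditary algebras plus a direct citation of the previous theorem.
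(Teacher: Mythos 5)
Your argument is correct and fills in exactly the step that the paper leaves implicit: the paper's proof simply asserts that (i) and (ii) imply $\Hom_{A_0}(A_{>0},A_0)=0$ and then cites Theorem \ref{main theorem 2}(2), while you supply the hereditary-splitting reasoning that justifies the assertion. This is the same approach as the paper, with the paper's one-line deduction spelled out.
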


\begin{proof}
The conditions (i) and (ii) imply that $\Hom_{A_{0}}(A_{> 0}, A_{0} ) = 0$.
Thus the assertion follows from Theorem \ref{main theorem 2}.
\end{proof}

\section{Application to the truncated preprojective algebras}\label{section-trucated-pp-alg}

In this section,  we study tilting theory for the truncated preprojective algebras introduced by \cite{BIRSc}.
We fix a finite acyclic connected quiver $Q=(Q_0,Q_1)$ with $Q_0=\{1,2,\cdots, n\}$.
For an arrow $\alpha$ going from $i$ to $j$, we write $s(\alpha)=i$ and $t(\alpha)=j$.
\[
\alpha : s(\alpha) \to t(\alpha)
\]
We denote by $|Q|$ the underlying graph of $Q$.

The double quiver $\overline{Q}$ of $Q$ is defined by $\overline{Q}_0:=Q_0$ and
\[
\overline{Q}_1:=Q_1 \sqcup \{ \alpha^*:t(\alpha)\to s(\alpha) \ | \ \alpha\in Q_1 \}.
\]
We call the $K$-algebra 
\begin{center}
$\Pi(Q):=K\overline{Q}/\langle \sum_{\alpha\in Q_1}(\alpha\alpha^*-\alpha^*\alpha) \rangle$
\end{center}
the \emph{preprojective algebra} of $Q$.
It is known that $\Pi(Q)$ is finite dimensional if and only if $|Q|$ is Dynkin \cite{Ringel}. 

We consider the grading on $\overline{Q}$ defined by
\[
\deg \alpha =\begin{cases}
0 & (\alpha \in Q_1) \\
1 & (\alpha \in\overline{Q}_1\backslash Q_1).
\end{cases}
\]
Then, $K\overline{Q}$ is a graded algebra with $(K\overline{Q})_0=KQ$.
Since an element $\sum_{\alpha\in Q_1}(\alpha\alpha^*-\alpha^*\alpha)$ of $K\overline{Q}$ is homogeneous of degree $1$, 
$\Pi(Q)$ is also a graded algebra with $\Pi(Q)_0=KQ$. 
We remark that if $Q'$ is an acyclic quiver with $|Q'|=|Q|$, then $\Pi(Q)$ and $\Pi(Q')$ are isomorphic to each other as algebras, but not isomorphic as graded algebras.

The \emph{Coxeter group} $W_Q$ of $Q$ is the group generated by
$\{s_i \ | \ i\in Q_0\}$ with relations $s_i^2=1$, $s_is_j=s_js_i$ if there exist no arrows between $i$ and $j$, and $s_is_js_i=s_js_is_j$ if there exists exactly one arrow between $i$ and $j$.
Let $w\in W_Q$.
An expression $w=s_{i_1}s_{i_2}\cdots s_{i_{r}}$ of $w$ is called \emph{reduced} if $r\leq k$ holds for any expression $w=s_{j_1}s_{j_2}\cdots s_{j_{k}}$ of $w$.
If a reduced expression of $w$ is given by $w=s_{i_1}s_{i_2}\cdots s_{i_{r}}$, then we define
\[
\supp(w):=\{i_1,i_2,\cdots,i_{r}\} \subset Q_0.
\]
This set $\supp(w)$ does not depend on the choice of a reduced expression of $w$ (see \cite[Corollary 1.4.8 (ii)]{BF}).

We define a graded two-sided ideal $I_w$ of $\Pi(Q)$ associated to $w\in W_Q$.
We denote by $e_i$ the idempotent of $\Pi(Q)$ corresponding to the vertex $i\in Q_0$.
For $i\in Q_0$, we define a graded two-sided ideal $I_i$ of $\Pi(Q)$ by
\[
I_i:=\Pi(Q)(1-e_i)\Pi(Q).
\]
For  $w\in W_Q$ with a reduced expression $w=s_{i_1}s_{i_2}\cdots s_{i_{r}}$, 
we define a graded two-sided ideal $I_w$ of $\Pi(Q)$ by
\[
I_w:=I_{i_1}I_{i_2}\cdots I_{i_{r}}.
\]
Note that $I_w$ does not depend on the choice of a reduced expression of $w$ (see \cite[Theorem II.1.9]{BIRSc}).

Now we define a graded algebra $\Pi(Q)_w$ by
\[
\Pi(Q)_w:=\Pi(Q)/I_w.
\]
This is a finite dimensional graded $1$-IG-algebra \cite[Proposition II.2.2]{BIRSc}. 
We call $\Pi(Q)_w$ \emph{the truncated preprojective algebra of $Q$ associated to $w$}.
The aim of this section is to prove the following result. 

\begin{theorem}\label{tilting theorem}
Let $Q$ be a finite acyclic connected  quiver, and $w\in W_Q$.
If $|Q|$ is tree, 
then $\stabgrCM \Pi(Q)_{w}$ has a tilting object. 
\end{theorem}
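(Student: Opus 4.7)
The plan is to apply Theorem \ref{tilting theorem 1}, whose hypothesis $g(A)\le 0$ typically fails for $A=\Pi(Q)_w$ in the given orientation (as the paper explicitly warns in its discussion preceding the theorem). The strategy has two steps: first establish an invariance of $\stabgrCM\Pi(Q)_w$ under sink reflections of $Q$, then exploit the tree hypothesis on $|Q|$ to choose an orientation $Q'$ of $|Q|$, reached from $Q$ by iterated sink reflections, for which $g(\Pi(Q')_w)\le 0$. Theorem \ref{tilting theorem 1} applied to $\Pi(Q')_w$, combined with transport along the equivalence, finishes the proof.

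For the sink-reflection invariance, let $k\in Q_0$ be a sink of $Q$ and $\sigma_kQ$ the quiver obtained by reversing every arrow incident to $k$. Since the Coxeter group $W_Q$ and the two-sided ideal $I_w$ depend only on $|Q|$, and $\Pi(Q)\cong\Pi(\sigma_kQ)$ as ungraded algebras, $\Pi(Q)_w$ and $\Pi(\sigma_kQ)_w$ coincide as ungraded algebras and differ only in their grading: an arrow of $Q$ incident to $k$ (which contributes to degree $0$ in $\Pi(Q)_w$) is identified with the dual arrow in the double of $\sigma_kQ$ (which sits in degree $1$ in $\Pi(\sigma_kQ)_w$), and vice versa. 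I plan to realize this change of grading as a graded Morita equivalence with progenerator $(1-e_k)\Pi(Q)_w\oplus e_k\Pi(Q)_w(-1)$; a degree-wise computation of the blocks of its graded endomorphism algebra identifies it with $\Pi(\sigma_kQ)_w$. This Morita equivalence preserves projectives, hence restricts to $\grCM$, and descends to a triangle equivalence $\stabgrCM\Pi(Q)_w\simeq\stabgrCM\Pi(\sigma_kQ)_w$.

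For the choice of $Q'$, the plan is to use that on a tree the acyclic orientations form a single orbit under sink (equivalently source) reflections. Using this freedom, I would orient $|Q|$ so that the two conditions of Corollary \ref{cor of main theorem 2} hold for $A=\Pi(Q')_w$: namely $\gldim A_0=1$, so $A_0$ is the path algebra of a subquiver of $Q'$ (hence hereditary), and $A_{>0}$ contains no indecomposable projective $A_0$-module as a direct summand. The relevant combinatorial input is a degree-$0$/degree-$1$ analysis of the ideal $I_w$: an arrow $\alpha$ of $Q'$ survives in degree $0$ of $\Pi(Q')_w$ precisely when its orientation is compatible with the reduced expressions of $w$, while the degree-$>0$ components avoid the critical projective summands as soon as each vertex of $\supp(w)$ has a suitably oriented adjacent edge. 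Because $|Q|$ is a tree, these two local conditions can be matched globally by iteratively reflecting at sinks.

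The main obstacle is the orientation argument of the second step: one must show, uniformly in $w\in W_Q$, that iterated sink reflections on the tree $|Q|$ can produce an orientation $Q'$ simultaneously achieving the hereditary condition on $(\Pi(Q')_w)_0$ and the no-projective-summand condition on $(\Pi(Q')_w)_{>0}$. This is plausibly done by induction on $\ell(w)$ or on the number of vertices of $Q$, peeling off a leaf of $|Q|$ and reflecting appropriately, but the bookkeeping linking the combinatorics of reduced expressions with the explicit generators of $I_w$ is the delicate point. Once such $Q'$ is produced, Theorem \ref{tilting theorem 1} yields a tilting object $V$ in $\stabgrCM\Pi(Q')_w$, and transporting it along the equivalence of the first step gives a tilting object in $\stabgrCM\Pi(Q)_w$, completing the proof.
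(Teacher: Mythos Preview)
Your overall architecture matches the paper: sink-reflection invariance of $\stabgrCM\Pi(Q)_w$ (Proposition~\ref{reflection proposition}), then reorientation to some $Q'$ where Corollary~\ref{cor of main theorem 2} applies, then Theorem~\ref{tilting theorem 1}. Your progenerator $(1-e_k)\Pi(Q)_w\oplus e_k\Pi(Q)_w(-1)$ is the paper's $P$ up to a global degree shift, so that step is fine.

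The gap is exactly where you flag it: you do not have a mechanism for choosing $Q'$. Your proposed induction on $\ell(w)$ or on leaves, together with a local ``arrow survives in degree $0$ iff its orientation is compatible with $w$'' analysis, is not how the paper proceeds, and it is not clear your version can be made to work; the condition that $A_0$ be merely hereditary (path algebra of a subquiver) is too weak in isolation, and your description of the no-projective-summand condition is imprecise. The paper instead aims for the stronger and much cleaner condition $A_0=KQ'$, and achieves it via a single combinatorial criterion (Theorem~\ref{special-case}): if the sequence of indices in a reduced expression of $w$ contains an admissible sequence of $Q'$ as a subsequence, then $I_w\subset\Pi(Q')_{>0}$ by \cite[Lemma 3.2]{Kimura 1}, whence $A_0=KQ'$ and $A_i$ is a quotient of $\tau^{-i}(KQ')$, so contains no projective summand.

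The choice of $Q'$ is then immediate and requires no induction. First reduce to $\supp(w)=Q_0$ using Lemma~\ref{rejection lemma} (a step you omit). Now every vertex appears in the reduced expression, so extract any subsequence $(i_{j_1},\dots,i_{j_n})$ listing each vertex once. Because $|Q|$ is a tree, orienting every edge from its earlier endpoint to its later endpoint in this list produces an acyclic quiver $Q'$ in which $(i_{j_1},\dots,i_{j_n})$ is admissible by construction; by \cite[VII, Lemma 5.2]{Assem-Simson-Skowronski} any two orientations of a tree differ by iterated sink reflections, so $Q'$ is reachable from $Q$. This is the missing idea: the ``good orientation'' is read off directly from any reduced expression of $w$, and the tree hypothesis is used only to guarantee that every total order on vertices arises from an acyclic orientation.
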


\subsection{Sink reflections and graded Morita equivalences}
In the proof of Theorem \ref{tilting theorem}, sink reflections of quivers play an essential role. 
We recall the definition of sink reflections of quivers.

\begin{definition}
Let $i\in Q_0$ be a sink vertex, i.e. there are no arrows going from the vertex $i$.
The \emph{sink reflection of $Q$ at the vertex $i$} is the quiver $\sigma_i(Q)$ defined by $(\sigma_i(Q))_0:=Q_0$ and 
\[
\sigma_i(Q)_1:= \{\alpha \in Q_1 \ | \ t(\alpha)\neq i \} \, \sqcup\, \{ \alpha' :i \to s(\alpha) \ | \  \mbox{$\alpha\in Q_1$ s.t. $t(\alpha)=i$}\}.
\]
In brief, $\sigma_i(Q)$ is the quiver obtained by reversing all arrows of $Q$ going to the vertex $i$.
\end{definition}

\begin{remark}\label{reflection-Coxeter grp}
Let $i\in Q_0$ be a sink vertex.
Then we have $\Pi(Q)\cong\Pi(\sigma_i(Q))$ as ungraded algebras since $|Q|=|\sigma_i(Q)|$.
Moreover, we can identify $W_Q$ and $W_{\sigma_i(Q)}$ by the following correspondence:
\[
W_Q \ni s_{i_1} s_{i_2} \cdots s_{i_{r}} \leftrightarrow s_{i_1} s_{i_2} \cdots s_{i_{r}} \in W_{\sigma_i(Q)}.
\]
\end{remark}

Now, we prove that the category of all graded $\Pi(Q)_w$-modules is invariant under sink reflections.

\begin{proposition}\label{reflection proposition}
Let $i\in Q_0$ be a sink vertex, and $Q':=\sigma_i(Q)$.
We take $w \in W_{Q}=W_{Q'}$,
and set $A=\Pi(Q)_w$ and $B=\Pi(Q')_{w}$.
Then, the following assertions hold.
\begin{enumerate}[\rm (1)]
\item 
Let $P:=e_{i} A(1) \oplus (A/e_iA)\in\grmod A$. 
Then, we have $\End_{A}(P)\cong B$ as graded algebras.
\item
The graded algebras $A$ and $B$ are graded Morita equivalent.
Therefore, we have an equivalence 
\[
\stabgrCM \Pi(Q)_{w} \cong \stabgrCM \Pi(Q')_{w}
\]
of triangulated categories.
\end{enumerate}
\end{proposition}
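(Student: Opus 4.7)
The plan is to construct an explicit isomorphism of graded algebras $\Phi\colon B\to\End_A(P)$. Write $P=\bigoplus_{j\in Q_0}P_j$ with $P_j=e_jA$ for $j\neq i$ and $P_i=e_iA(1)$, and denote by $f_j$ the projection onto $P_j$. The standard formula $\Hom_A(e_jA(p),e_kA(q))_n=(e_kAe_j)_{n+q-p}$ immediately shows that the graded dimensions of the blocks $f_k\End_A(P)f_j$ match, block by block, those of $e_kBe_j$. I will define $\Phi$ by sending $e_j\mapsto f_j$, sending each arrow $\gamma$ of $\overline{Q'}$ not incident to $i$ (which is also an arrow of $\overline{Q}$) to the morphism given by left multiplication by $\gamma\in A$, and, at the vertex $i$, sending each reflected degree-$0$ arrow $\alpha'\colon i\to j$ in $Q'$ (associated to $\alpha\colon j\to i$ in $Q$) to the map $P_j\to P_i$ with $e_j\mapsto \alpha^*\in e_iA_1e_j$ (which lives in internal degree $0$ of $P_i$ thanks to the shift), and its reverse $(\alpha')^*\colon j\to i$ of degree $1$ to the map $P_i\to P_j$ with $e_i\mapsto \alpha\in e_jA_0e_i$.

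The next step is to verify that $\Phi$ respects the defining relations of $B=\Pi(Q')_w$. At any vertex $v$ with no neighbour equal to $i$, the preprojective relation in $B$ coincides verbatim with the one in $A$. At a neighbour $j$ of $i$, the dictionary swaps $(\alpha,\alpha^*)$ to $((\alpha')^*,\alpha')$, but the commutator $[\alpha,\alpha^*]$ is antisymmetric under the swap, so the relation in $B$ maps, up to sign, onto the relation in $A$ and therefore vanishes. At the vertex $i$ itself---now a source of $Q'$---the relation reads $\sum_{\alpha\colon j\to i}\alpha'(\alpha')^*=0$, and transports via $\Phi$ to $\sum_{\alpha}\alpha^*\alpha=0$, the preprojective relation at the sink $i$ in $\Pi(Q)$, which holds in $A$. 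To handle the truncation ideal $I_w$, I appeal to the standard ungraded isomorphism $\Pi(Q')\cong\Pi(Q)$ preserving idempotents (exchanging $\alpha'\leftrightarrow\alpha^*$ and $(\alpha')^*\leftrightarrow\alpha$ at the arrows adjacent to $i$, identity elsewhere): since $I_w$ is defined from the idempotents and a reduced expression alone, it corresponds to itself, so $A$ and $B$ are isomorphic as ungraded algebras and, in particular, have the same $K$-dimension. As $\Phi$ is surjective by construction (the images of the idempotents and arrows generate $\End_A(P)$), the equality $\dim_K\End_A(P)=\dim_KA=\dim_KB$ forces it to be the desired graded isomorphism.

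For (2), observe that $P$ is a graded progenerator of $\grmod A$: the graded shifts $\{P(m)\}_{m\in\ZZ}$ jointly contain every indecomposable graded projective, since $e_jA(n)=P_j(n)$ for $j\neq i$ and $e_iA(n)=P_i(n-1)$ for $j=i$. By the graded Morita theorem, $\Hom_A(P,-)\colon\grmod A\xrightarrow{\sim}\grmod B$ is an equivalence of $\ZZ$-graded categories. This equivalence preserves Ext-functors and sends progenerators to progenerators, so the Cohen-Macaulay condition $\Ext_A^{>0}(-,A)=0$ transports to the Cohen-Macaulay condition in $\grmod B$, yielding $\grCM A\simeq\grCM B$ as Frobenius categories and, consequently, $\stabgrCM A\simeq\stabgrCM B$ as triangulated categories. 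The delicate step of the whole argument is the verification in part (1)---specifically, tracking how the shift in $P_i=e_iA(1)$ conspires with the swap of degrees of arrows adjacent to $i$ under the sink reflection, and confirming that $I_w$ is respected via the ungraded identification $\Pi(Q)\cong\Pi(Q')$; once $\Phi$ is seen to be a well-defined graded homomorphism, everything else reduces to bookkeeping and a dimension count.
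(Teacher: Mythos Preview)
Your approach is essentially the paper's: factor the desired isomorphism as $B\xrightarrow{\widetilde\phi}A\xrightarrow{\mathsf{lm}}\End_A(P)$, where $\widetilde\phi$ is the standard ungraded isomorphism $\Pi(Q')_w\cong\Pi(Q)_w$ and $\mathsf{lm}$ is left multiplication, and then observe that the composite is graded thanks to the shift in $P_i=e_iA(1)$. Your part~(2) also matches the paper's use of the graded Morita theorem.

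There is, however, a genuine error in your explicit assignment in~(1). With $\alpha'\mapsto\alpha^*$ and $(\alpha')^*\mapsto\alpha$ (no sign), the preprojective relation at a neighbour $j$ of $i$ is \emph{not} preserved. The relation at $j$ in $\Pi(Q')$ is
\[
\sum_{\substack{\beta\in Q'_1\\ \beta:\,j\to k,\ k\neq i}}\beta\beta^*\ -\ \sum_{\substack{\gamma\in Q'_1\\ \gamma:\,k\to j}}\gamma^*\gamma\ -\ \sum_{\alpha':\,i\to j}(\alpha')^*\alpha',
\]
and under your dictionary it maps to
\[
\sum_{\beta:\,j\to k,\ k\neq i}\beta\beta^*\ -\ \sum_{\gamma:\,k\to j}\gamma^*\gamma\ -\ \sum_{\alpha:\,j\to i}\alpha\alpha^*\ =\ (\text{relation at }j\text{ in }\Pi(Q))\ -\ 2\sum_{\alpha:\,j\to i}\alpha\alpha^*,
\]
which has no reason to vanish in $A$. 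Your sentence ``the relation in $B$ maps, up to sign, onto the relation in $A$'' is the mistake: only the commutators coming from the reflected arrows flip sign, while those coming from the other arrows incident to $j$ do not, so the total is not $\pm(\text{relation in }A)$.

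The repair is exactly what the paper does: put a sign on one of the two assignments, e.g.\ $\alpha'\mapsto -\alpha^*$ and $(\alpha')^*\mapsto\alpha$. Then $(\alpha')^*\alpha'\mapsto -\alpha\alpha^*$, the relation at every $j$ transports to the relation in $A$ on the nose, and your remaining steps (well-definedness modulo $I_w$ via the ungraded identification, surjectivity, and the dimension count) go through.
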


\begin{proof}
(1)
We prove that there is an isomorphism $B\to \End_{A}(P)$ of graded algebras.
First $A$ and $B$ are isomorphic to each other as ungraded algebras. 
In fact, a pair of maps $\phi_0:Q'_0\to \Pi(Q)$ and  $\phi_1:Q'_1\to \Pi(Q)$ defined by 
\[
\phi_0(i):=e_i, \hspace{5mm}
\phi_1(\beta)=
\begin{cases}
-\alpha^* &  \mbox{ if }  \beta=\alpha' \mbox{ where } \alpha\in Q_1 \mbox{ with } t(\alpha)= i, \\
\alpha  &  \mbox{ if }  \beta=(\alpha')^* \mbox{ where } \alpha\in Q_1 \mbox{ with } t(\alpha)= i,\\
\beta  & \mbox{ if } s(\beta)\neq i \mbox{ and } t(\beta)\neq i. \\
\end{cases}
\]
is extended to an isomorphism $\phi: \Pi(Q')\to \Pi(Q)$ of ungraded algebras \cite[Chapter II. Theorem 1.8]{Assem-Simson-Skowronski}.
It is easy to see that the map $\phi$ sends $I'_{w}$ to $I_{w}$ where $I'_w$ is the ideal of $\Pi(Q')$ associated to $w$. 
Hence,  $\phi$ induces  an isomorphism $\widetilde{\phi}: B\to A$ of ungraded algebras. 

Next  we consider the following composite of homomorphisms of ungraded algebras:
\[
B \xrightarrow{\widetilde{\phi}} A \xrightarrow{\mathsf{lm}} \End_{A}(P)  
\] 
where $\mathsf{lm}$ is the map which sends a homogeneous element  $a \in A$ to the left multiplication map $a\, \cdot  : P \to P$. 
It is easy to check that this composite is an isomorphism of graded algebras.  

(2)
If we regard $P$ as an ungraded $A$-module, then it is a projective generator of $\mod A$. 
Thus, $A$ and $B$ are graded Morita equivalent by (1) and \cite[Theorem 5.4]{Green-Gordon1}.
\end{proof}


%

\subsection{Admissible sequences of quivers and the existence of tilting objects}

We show that $\stabgrCM \Pi(Q)_w$ has a tilting object if an admissible sequence of $Q$ appears in the sequence of indexes of a reduced expression of $w$.
We recall the definition of admissible sequences in $Q$.

\begin{definition}
%
A sequence $(i_1,i_2,\cdots,i_n)$ of all vertices $1,2,\cdots,n$ in $Q$ is called an \emph{admissible sequence in $Q$}
if $j<k$ whenever there is an arrow going from $i_j$ to $i_k$.
\end{definition}

Since $Q$ is acyclic, an admissible sequences in $Q$ exists.
Note that a sequence $(i_1,i_2,\cdots,i_n)$ of all vertices in $Q$ is admissible in the sense of the above definition if and only if it is an admissible sequence of sources in the sense of \cite[Chapter VII.5.]{Assem-Simson-Skowronski}.



\begin{theorem}\label{special-case}
Let $w\in W_Q$ with a reduced expression $w=s_{i_1} s_{i_2} \cdots s_{i_{r}}$.
Assume that the sequence $(i_1,i_2,\cdots,i_{r})$ of $Q_0$ contains an admissible sequence in $Q$ as a subsequece.
Then, we have $g(\Pi(Q)_w)\leq 0$, and so $V$ is a tilting object in  $\stabCM^{\ZZ}\Pi(Q)_w$.
\end{theorem}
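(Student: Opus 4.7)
The plan is to verify the two hypotheses of Theorem~\ref{tilting theorem 1} for the algebra $A := \Pi(Q)_w$: that $A_0 := (\Pi(Q)_w)_0$ has finite global dimension, and that $g(A) \leq 0$. Once these are established, Theorem~\ref{tilting theorem 1} directly produces the tilting object $V$ in $\stabgrCM \Pi(Q)_w$.

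For the finite global dimension of $A_0$, we begin from the identification $\Pi(Q)_0 = KQ$, so that $A_0 = KQ/(I_w)_0$. The admissible subsequence hypothesis forces $\supp(w) = Q_0$, hence no vertex idempotent $e_i$ is killed in $A_0$. A direct analysis of the degree-zero part of the product $I_w = I_{i_1} I_{i_2} \cdots I_{i_r}$ shows that $(I_w)_0$ is a monomial ideal of $KQ$ generated by paths of bounded length. Since $Q$ is acyclic, this makes $A_0$ the path algebra of an acyclic quiver modulo a monomial ideal, and such algebras are well known to have finite global dimension (in fact, $\gldim A_0 \leq 1$ in the non-degenerate cases).

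For the inequality $g(A) \leq 0$, we plan to apply Corollary~\ref{cor of main theorem 2}: once $\gldim A_0 \leq 1$ is verified, it remains to show that $A_{>0}$, viewed as a right $A_0$-module, contains no indecomposable projective $A_0$-module as a direct summand. This is where the admissible subsequence $(j_1, \ldots, j_n)$ enters essentially. The right $A_0$-module $A_{>0}$ is generated by the dual-arrow classes $\alpha^*$ with $\alpha \in Q_1$. For each arrow $\alpha \colon i \to j$ in $Q$, the admissible order places $i$ before $j$, so that the positions of $s_i$ and $s_j$ in the reduced word for $w$ are consistent with $\alpha$. Combining this with the commutation relation $\sum_\alpha (\alpha\alpha^* - \alpha^*\alpha) = 0$ and the structure of $I_w = I_{i_1} \cdots I_{i_r}$, we argue that for every vertex $i$ the element $e_i \cdot \alpha^*$ lies inside $A_{>0} \cdot J_{A_0}$ for each arrow with $s(\alpha) = i$, which forces each indecomposable summand of $A_{>0}$ to have a top simple factor $S_k$ whose corresponding projective $e_k A_0$ cannot split off $A_{>0}$. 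Thus no projective $A_0$-summand appears in $A_{>0}$, and Corollary~\ref{cor of main theorem 2} yields $g(A)\leq 0$.

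Combining the two points, Theorem~\ref{tilting theorem 1} concludes that $V$ is a tilting object in $\stabgrCM \Pi(Q)_w$. The main obstacle is the combinatorial step in the previous paragraph: precisely controlling the right $A_0$-module structure of $A_{>0}$ using the admissible subsequence, i.e.\ tracking which paths $e_j \alpha^* e_i$ survive modulo $I_w$ and in which degrees, and verifying that the admissible ordering exactly prevents any indecomposable projective $A_0$-module from splitting off $A_{>0}$.
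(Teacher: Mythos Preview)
Your overall strategy---verify the hypotheses of Corollary~\ref{cor of main theorem 2} and then invoke Theorem~\ref{tilting theorem 1}---matches the paper's, but you are missing the one observation that makes both hypotheses immediate, and without it your treatment of $A_0$ heads in the wrong direction.

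The key fact is that the admissible-subsequence assumption forces $I_w \subset \Pi(Q)_{>0}$, i.e.\ $(I_w)_0 = 0$; this is \cite[Lemma~3.2]{Kimura 1}. Hence $A_0 = KQ$ \emph{exactly}, not a proper monomial quotient of it, and $\gldim A_0 = 1$ is automatic since $Q$ is acyclic. Your route---analysing $A_0 = KQ/(I_w)_0$ with $(I_w)_0$ a possibly nontrivial monomial ideal---is not just unnecessary but problematic: Corollary~\ref{cor of main theorem 2} requires $\gldim A_0 = 1$, and a nontrivial monomial quotient of $KQ$ typically has global dimension $>1$, so condition~(i) would fail and you could not apply the corollary at all.

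Once $A_0 = KQ$ is known, the absence of projective summands in $A_{>0}$ is also easy and does not require the combinatorics you sketch. One has $\Pi(Q)_i \cong \tau^{-i}(KQ)$ as a right $KQ$-module, and for $i \geq 1$ this has no nonzero projective summand (over a hereditary algebra, $\tau^{-1}$ of a nonzero module is never projective). Since any surjection onto a projective module splits, each quotient $A_i$ of $\Pi(Q)_i$ inherits this property. Your proposed argument---pushing the generators $e_i\alpha^*$ into $A_{>0}\cdot J_{A_0}$---is too vague as stated (taken literally it would give $A_{>0}=A_{>0}J_{A_0}$, hence $A_{>0}=0$ by Nakayama), and in any case becomes unnecessary once $(I_w)_0=0$ is established.
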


\begin{proof}
We set $A=\Pi(Q)_w$. 
By the assumption and \cite[Lemma 3.2]{Kimura 1}, an inclusion relation $I_w \subset \Pi(Q)_{>0}$ holds. 
Therefore, we have $A_0=\Pi(Q)_0=KQ$, and so $\gldim A_0= 1$.
Since $A_i$ is a factor $KQ$-module of $\Pi(Q)_i\cong \tau^{-i}(KQ)$ where $\tau$ is the Auslander-Reiten translation of $\mod KQ$,  $A_i$ does not contain projective $KQ$-modules as direct summands. 
Then, we have $g(A)\leq0$ by Corollary \ref{cor of main theorem 2}.
Thus, $\stabgrCM A$ has a tilting object $V$ by Theorem \ref{tilting theorem 1}.
\end{proof}

\begin{remark}
The first author proved that $\stabgrCM \Pi(Q)_w$ has a tilting object if $w$ satisfies some conditions which is stronger than the assumption of Theorem \ref{special-case} (see \cite[Theorem 1.2 and Lemma 4.4]{Kimura 2}).
His tilting object is different from our tilting object $V$.
\end{remark}

We illustrate how to apply Proposition \ref{reflection proposition} and Theorem \ref{special-case}.

\begin{example}\label{example-truncated-pp-1}
Let $Q$ be a quiver
\[
\begin{xy}
	(0,10)="O",
	"O"+<0cm,0cm>="1"*{1},
	"1"+<-0.8cm,-1.2cm>="2"*{2},
	"1"+<0.8cm,-1.2cm>="3"*{ \ 3,},
	\ar "1"+<-0.1cm,-0.2cm>;"2"+<0.2cm,0.2cm>_{\alpha}
	\ar "1"+<0.1cm,-0.2cm>;"3"+<-0.2cm,0.2cm>^{\gamma}
	\ar "2"+<0.2cm,-0cm>;"3"+<-0.2cm,0cm>_{\beta}
\end{xy}
\]
and $w=s_2s_3s_1s_3s_2s_1\in W_Q$.
We set $A=\Pi(Q)_w$.
Note that this algebra was considered in Example \ref{example-g(A)=1} (2).
It was shown that $g(A)=1$ holds and $V$ is not a tilting object in $\stabgrCM A$. 
In the following, we construct a tilting object in $\stabgrCM A$ by applying Proposition \ref{reflection proposition} and Theorem \ref{special-case}.

We mention that the sequence $(2,3,1,3,2,1)$ of indexes of the reduced expression $w=s_2s_3s_1s_3s_2s_1$ does not contain a unique admissible sequence $(1,2,3)$ in $Q$.
Therefore, we can not apply Theorem \ref{special-case} to $A$.
Now we consider the sink reflection $Q':=\sigma_3(Q)$ of $Q$, that is, 
\[
\begin{xy}
	(0,10)="O",
	"O"+<0cm,0cm>="1"*{1},
	"1"+<-0.8cm,-1.2cm>="2"*{2},
	"1"+<0.8cm,-1.2cm>="3"*{ \ 3.},
	\ar "1"+<-0.1cm,-0.2cm>;"2"+<0.2cm,0.2cm>_{\alpha}
	\ar@{<-} "1"+<0.1cm,-0.2cm>;"3"+<-0.2cm,0.2cm>^{\epsilon}
	\ar@{<-} "2"+<0.2cm,-0cm>;"3"+<-0.2cm,0cm>_{\delta}
\end{xy}
\]
Let $A'=\Pi(Q')_w$.
Then, the sequence $(2,3,1,3,2,1)$ contains a unique admissible sequence $(3,1,2)$ in $Q'$ as a subsequence. 
By Theorem \ref{special-case}, $A'$ satisfies $g(A')\leq 0$, and so $V'=\bigoplus_{i>0}A'(i)_{\geq0}$ is a tilting object in $\stabgrCM A'$.
Since there is an equivalence $\stabgrCM A \cong \stabgrCM A'$ of triangulated categories by Theorem \ref{reflection proposition}, $\stabgrCM A$ has a tilting object.

We calculate $V'$ and its endomorphism algebra. 
The algebras $A$ and $A'$ are isomorphic to each other as ungraded algebras, but those gradings are different. 
The graded quiver with relations of $A'$ is obtained by that of $A$ and the proof of Theorem \ref{reflection proposition} (compare with Example \ref{example-g(A)=1} (2)).
\[
\begin{xy}
	(0,10)="O",
	"O"+<0cm,0cm>="1"*{1},
	"1"+<-1cm,-1.5cm>="2"*{2},
	"1"+<1cm,-1.5cm>="3"*{3},
	\ar "1"+<-0.1cm,-0.2cm>;"2"+<0.2cm,0.2cm>^{\alpha}
	\ar@/^/ "2"+<0cm,0.2cm>;"1"+<-0.3cm,-0.1cm>^{\alpha^*}
	\ar "1"+<0.1cm,-0.2cm>;"3"+<-0.2cm,0.2cm>_{\epsilon^*}
	\ar@/_/ "3"+<0cm,0.2cm>;"1"+<0.3cm,-0.1cm>_{\epsilon}
	\ar "2"+<0.2cm,-0cm>;"3"+<-0.2cm,0cm>^{\delta^*}
	\ar@/^/ "3"+<-0.2cm,-0.2cm>;"2"+<0.2cm,-0.2cm>^{\delta}
\end{xy} 
\hspace{5mm}
\begin{cases}
\deg \alpha=\deg\delta=\deg\epsilon=0 \\
\deg \alpha^*=\deg \delta^*=\deg\epsilon^*=1,
\end{cases}
\mbox{ with } 
\hspace{5mm}
\begin{cases}
\alpha\alpha^*-\epsilon^*\epsilon=0 \\
\alpha^*\alpha+\delta^*\delta=0 \\
\epsilon\epsilon^*=\delta\delta^*=0 \\
\epsilon\alpha\delta^*=0\\
\delta\alpha^*\epsilon^*=0 \\
\alpha\delta^*=0
\end{cases}
\]
The Loewy series of indecomposable graded projective $A'$-modules $e_1A',e_2A',e_3A'$ are as follows.
\[
	\begin{xy} 
	(-15,2)*{e_1A'=},
	(0,0)="O",
	"O"+<0cm,1.2cm>="11"*{{\bf 1}},
	"11"+<-0.3cm,-0.5cm>="21"*{{\bf 2}},
	"21"+<-0.3cm,-0.5cm>="31"*{},
	"31"+<-0.3cm,-0.5cm>="41"*{},
	"41"+<-0.3cm,-0.5cm>="51"*{},
	"21"+/r0.6cm/="22"*{3},
	"31"+/r0.6cm/="32"*{1},
	"32"+/r0.6cm/="33"*{2},
	"41"+/r0.6cm/="42"*{},
	"42"+/r0.6cm/="43"*{},
	"43"+/r0.6cm/="44"*{1},
	"51"+/r0.6cm/="52"*{},
	"52"+/r0.6cm/="53"*{},
	"53"+/r0.6cm/="54"*{},
	"54"+/r0.6cm/="55"*{},
	
	\ar@{-}"11"+/dl/;"21"+/u/<2pt>
	\ar@{.}"11"+/dr/;"22"+/u/<-2pt>
	\ar@{.}"21"+/dr/;"32"+/u/<-2pt>
	\ar@{-}"22"+/dl/;"32"+/u/<2pt>
	\ar@{-}"22"+/dr/;"33"+/u/<-2pt>
	\ar@{.}"33"+/dr/;"44"+/u/<-2pt>
	\end{xy},
\hspace{10mm}
	\begin{xy} 
	(-17,2)*{e_2A'=},
	(0,0)="O",
	"O"+<0cm,1.2cm>="11"*{{\bf 2}},
	"11"+<-0.3cm,-0.5cm>="21"*{3},
	"21"+<-0.3cm,-0.5cm>="31"*{1},
	"31"+<-0.3cm,-0.5cm>="41"*{2},
	"41"+<-0.3cm,-0.5cm>="51"*{},
	"21"+/r0.6cm/="22"*{1},
	"31"+/r0.6cm/="32"*{2},
	"32"+/r0.6cm/="33"*{3},
	"41"+/r0.6cm/="42"*{},
	"42"+/r0.6cm/="43"*{1},
	"43"+/r0.6cm/="44"*{2},
	"51"+/r0.6cm/="52"*{},
	"52"+/r0.6cm/="53"*{},
	"53"+/r0.6cm/="54"*{},
	"54"+/r0.6cm/="55"*{1},

	\ar@{.}"11"+/dl/;"21"+/u/<2pt>
	\ar@{.}"11"+/dr/;"22"+/u/<-2pt>
	\ar@{-}"21"+/dl/;"31"+/u/<2pt>
	\ar@{-}"21"+/dr/;"32"+/u/<-2pt>
	\ar@{-}"22"+/dl/;"32"+/u/<2pt>
	\ar@{.}"22"+/dr/;"33"+/u/<-2pt>
	\ar@{-}"31"+/dl/;"41"+/u/<2pt>
	\ar@{.}"32"+/dr/;"43"+/u/<-2pt>
	\ar@{-}"33"+/dl/;"43"+/u/<2pt>
	\ar@{-}"33"+/dr/;"44"+/u/<-2pt>
	\ar@{.}"44"+/dr/;"55"+/u/<-2pt>
	\end{xy},
\hspace{10mm}
	\begin{xy} 
	(-17,2)*{e_3A'=},
	(0,0)="O",
	"O"+<0cm,1.2cm>="11"*{{\bf 3}},
	"11"+<-0.3cm,-0.5cm>="21"*{1},
	"21"+<-0.3cm,-0.5cm>="31"*{2},
	"31"+<-0.3cm,-0.5cm>="41"*{},
	"41"+<-0.3cm,-0.5cm>="51"*{},
	"21"+/r0.6cm/="22"*{2},
	"31"+/r0.6cm/="32"*{},
	"32"+/r0.6cm/="33"*{1},
	"41"+/r0.6cm/="42"*{},
	"42"+/r0.6cm/="43"*{},
	"43"+/r0.6cm/="44"*{},
	"51"+/r0.6cm/="52"*{},
	"52"+/r0.6cm/="53"*{},
	"53"+/r0.6cm/="54"*{},
	"54"+/r0.6cm/="55"*{},
	
	\ar@{-}"11"+/dl/;"21"+/u/<2pt>
	\ar@{-}"11"+/dr/;"22"+/u/<-2pt>
	\ar@{-}"21"+/dl/;"31"+/u/<2pt>
	\ar@{.}"22"+/dr/;"33"+/u/<-2pt>
	\end{xy}.
\]
Our $V'=\Omega_A^1(U')$ is given by $V'=X^{\oplus2}\oplus Y \oplus Z^{\oplus3}$ where $X,Y,Z$ are as follows.
\[
	\begin{xy} 
	(-13,4)*{X=},
	(0,0)="O",
	"O"+<0cm,1.2cm>="11"*{},
	"11"+<-0.3cm,-0.5cm>="21"*{},
	"21"+<-0.3cm,-0.5cm>="31"*{},
	"31"+<-0.3cm,-0.5cm>="41"*{},
	"41"+<-0.3cm,-0.5cm>="51"*{},
	"21"+/r0.6cm/="22"*{{\bf 3}},
	"31"+/r0.6cm/="32"*{{\bf 1}},
	"32"+/r0.6cm/="33"*{{\bf 2}},
	"41"+/r0.6cm/="42"*{},
	"42"+/r0.6cm/="43"*{},
	"43"+/r0.6cm/="44"*{1},

	\ar@{-}"22"+/dl/;"32"+/u/<2pt>
	\ar@{-}"22"+/dr/;"33"+/u/<-2pt>
	\ar@{.}"33"+/dr/;"44"+/u/<-2pt>
	\end{xy},
	\hspace{10mm}
	\begin{xy} 
	(-13,4)*{Y=},
	(0,0)="O",
	"O"+<0cm,1.2cm>="11"*{},
	"11"+<-0.3cm,-0.5cm>="21"*{{\bf 3}},
	"21"+<-0.3cm,-0.5cm>="31"*{{\bf 1}},
	"31"+<-0.3cm,-0.5cm>="41"*{{\bf 2}},
	"41"+<-0.3cm,-0.5cm>="51"*{},
	"21"+/r0.6cm/="22"*{{\bf 1}},
	"31"+/r0.6cm/="32"*{{\bf 2}},
	"32"+/r0.6cm/="33"*{3},
	"41"+/r0.6cm/="42"*{},
	"42"+/r0.6cm/="43"*{1},
	"43"+/r0.6cm/="44"*{2},
	"51"+/r0.6cm/="52"*{},
	"52"+/r0.6cm/="53"*{},
	"53"+/r0.6cm/="54"*{},
	"54"+/r0.6cm/="55"*{1},

	\ar@{-}"21"+/dl/;"31"+/u/<2pt>
	\ar@{-}"21"+/dr/;"32"+/u/<-2pt>
	\ar@{-}"22"+/dl/;"32"+/u/<2pt>
	\ar@{.}"22"+/dr/;"33"+/u/<-2pt>
	\ar@{-}"31"+/dl/;"41"+/u/<2pt>
	\ar@{.}"32"+/dr/;"43"+/u/<-2pt>
	\ar@{-}"33"+/dl/;"43"+/u/<2pt>
	\ar@{-}"33"+/dr/;"44"+/u/<-2pt>
	\ar@{.}"44"+/dr/;"55"+/u/<-2pt>
	\end{xy},
	\hspace{10mm}
	\begin{xy} 
	(-13,4)*{Z=},
	(0,0)="O",
	"O"+<0cm,0.7cm>="11"*{{\bf 1}},
	"11"+<-0.3cm,-0.5cm>="21"*{},
	"21"+<-0.3cm,-0.5cm>="31"*{},
	"31"+<-0.3cm,-0.5cm>="41"*{},
	"41"+<-0.3cm,-0.5cm>="51"*{},
	"21"+/r0.6cm/="22"*{},
	"31"+/r0.6cm/="32"*{},
	"32"+/r0.6cm/="33"*{},
	\end{xy}.
\]
The endomorphism algebra $\underline{\End}_{A'}(V')_0$ is Morita equivalent to the algebra $\Gamma$ defined by the following quiver with the relation:
\[
\xymatrix{
\bullet \ar[r]^a &\bullet \ar[r]^b & \bullet
}
\hspace{10mm}
ab=0.
\] 
Therefore, there is an equivalences $\stabgrCM A \cong \stabgrCM A' \cong\sfD^{\mrb}(\mod\Gamma)$ of triangulated categories.
\end{example}

\subsection{Proof of Theorem \ref{tilting theorem}}

Now, we give a proof of Theorem \ref{tilting theorem}.
We need one more lemma. 

\begin{lemma}[{\cite[Lemma 2.1]{AIRT}}]\label{rejection lemma}
Let $w\in W_Q$, and $\widetilde{Q}$ the full subquiver of $Q$ such that $\widetilde{Q}_0=\supp(w)$. 
We regard $w$ is an element in $W_{\widetilde{Q}}\subset W_Q$.
Then, we have $\Pi(Q)_{w} = \Pi(\widetilde{Q})_{w}$ as graded algebras.
\end{lemma}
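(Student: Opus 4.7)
The plan is to produce mutually inverse graded algebra homomorphisms between $\Pi(Q)_w$ and $\Pi(\widetilde{Q})_w$, based on the following crucial preliminary observation: $e_j \in I_w$ for every $j \in Q_0 \setminus \supp(w)$. Indeed, for a reduced expression $w = s_{i_1}\cdots s_{i_r}$ and any $j \notin \{i_1, \ldots, i_r\}$, we have $e_j = e_j(1-e_{i_k}) \in I_{i_k}$ for each $k$, hence $e_j = e_j^{\,r} \in I_{i_1}\cdots I_{i_r} = I_w$. Consequently, in $\Pi(Q)_w$ every element of $\overline{Q}$ incident to a vertex outside $\supp(w)$ vanishes, and in particular $1 = \sum_{j \in \supp(w)} e_j$.

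First I would build $\bar{\phi} \colon \Pi(\widetilde{Q})_w \to \Pi(Q)_w$ from the inclusion of graded double quivers $\overline{\widetilde{Q}} \hookrightarrow \overline{Q}$. To see that the preprojective relations of $\widetilde{Q}$ lift, note that for $i \in \supp(w)$ the preprojective relation at $i$ in $\Pi(Q)$ differs from that in $\Pi(\widetilde{Q})$ only by terms $\alpha \alpha^* - \alpha^* \alpha$ with $\alpha$ having its other endpoint in $Q_0\setminus\supp(w)$, and such terms vanish modulo $I_w$ by the observation above. Since $\bar{\phi}$ sends $1 - e_{i_k}$ to $1 - e_{i_k}$ for each $k$, it annihilates $\widetilde{I}_w$ and descends to the quotient. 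Surjectivity is immediate: every path in $\overline{Q}$ visiting a vertex outside $\supp(w)$ is zero in $\Pi(Q)_w$, so $\Pi(Q)_w$ is spanned by images of paths in $\overline{\widetilde{Q}}$.

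For the inverse I would define $\psi\colon K\overline{Q} \to \Pi(\widetilde{Q})_w$ as the identity on generators living in $\overline{\widetilde{Q}}$ (including idempotents) and zero on the other generators (arrows or idempotents touching $Q_0\setminus\supp(w)$). Well-definedness on $\Pi(Q)$ reduces to checking, for each $i \in Q_0$, that the preprojective relation at $i$ is killed: at $i \in \supp(w)$ this relation maps to the corresponding relation in $\Pi(\widetilde{Q})$, while at $i \notin \supp(w)$ every summand is sent to $0$ because $e_i \mapsto 0$. Since $\psi(1 - e_{i_k}) = 1 - e_{i_k}$ for each $k$, it carries $I_w$ into $\widetilde{I}_w$ and descends to $\bar{\psi}\colon \Pi(Q)_w \to \Pi(\widetilde{Q})_w$. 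A check on the finite list of generators shows that $\bar{\phi}$ and $\bar{\psi}$ are mutually inverse, and both preserve the grading since they act by the identity or by zero on each homogeneous generator.

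The main (modest) obstacle is the bookkeeping for well-definedness of $\psi$, specifically verifying that the preprojective relation of $\Pi(Q)$ at a vertex outside $\supp(w)$ lies in the kernel of $\psi$; this is, however, automatic once one notes that $\psi$ annihilates every arrow incident to such a vertex. Everything else follows from the key lemma $e_j \in I_w$ for $j \notin \supp(w)$.
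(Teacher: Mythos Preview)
Your proof is correct. The paper itself does not prove this lemma; it simply cites \cite[Lemma 2.1]{AIRT} and uses the result as a black box. So there is no ``paper's own proof'' to compare against.

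That said, your argument is essentially the standard one and is cleanly executed. The key observation $e_j \in I_w$ for $j \notin \supp(w)$ (via $e_j = e_j^{\,r}$ with each factor in $I_{i_k}$) is exactly what drives the result in \cite{AIRT} as well: it immediately forces all idempotents and arrows touching $Q_0 \setminus \supp(w)$ to vanish in $\Pi(Q)_w$, after which the identification with $\Pi(\widetilde{Q})_w$ is bookkeeping. One small point worth making explicit for the reader: the inclusion $K\overline{\widetilde{Q}} \hookrightarrow K\overline{Q}$ is not unital, but the composite to $\Pi(Q)_w$ becomes unital precisely because $\sum_{j \in \supp(w)} e_j = 1$ there; you do note this, but only parenthetically. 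Likewise, when you say ``$\bar\phi$ sends $1-e_{i_k}$ to $1-e_{i_k}$, hence annihilates $\widetilde{I}_w$,'' the implicit step is that $\phi(\widetilde{I}_{i_k})$ lands in the image of $I_{i_k}$ in $\Pi(Q)_w$, so $\phi(\widetilde{I}_w)$ lands in the image of $I_w$, which is zero. This is routine, but spelling it out in one line would remove any doubt.
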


\begin{proof}[Proof of Theorem \ref{tilting theorem}]
The proof consists of three steps. 
\begin{enumerate}[(Step 1)]
\item
Thanks to Lemma \ref{rejection lemma}, we may assume $Q_0=\supp (w)$.
Now, we fix a reduced expression $w=s_{i_1}s_{i_2}\cdots s_{i_{r}}$ of $w$.
The sequence $(i_1,i_2,\cdots,i_{r})$ of $Q_0$ contains a sequence $(i_{j_1},i_{j_2},\cdots,i_{j_n})$ of all vertices in $Q$.
\item
Since $|Q|$ is tree, it is easy to see that there is an acyclic quiver $Q'$ such that $|Q|=|Q'|$ and $(i_{j_1},i_{j_2},\cdots,i_{j_n})$ is an admissible sequence in $Q'$.
By \cite[Chapter VII. Lemma 5.2]{Assem-Simson-Skowronski}, 
we can obtain $Q'$ as an iterated sink reflections of $Q$.
Since $W_{Q}=W_{Q'}$ as in Remark \ref{reflection-Coxeter grp}, we regard $w$ as an element of $W_{Q'}$. 
Then, by Proposition \ref{reflection proposition}, there is an equivalence
\[
\stabgrCM \Pi(Q)_{w} \cong \stabgrCM \Pi(Q')_{w}
\]
of triangulated categories.
\item
Since $\stabgrCM \Pi(Q')_{w}$ has a tilting object by Theorem \ref{special-case},
so does $\stabgrCM \Pi(Q)_{w}$.
\qedhere
\end{enumerate}
\end{proof}

\end{document}